\theoremstyle{plain}	
\newtheorem{theorem}{Theorem}[section]
\newtheorem{proposition}[theorem]{Proposition}
\newtheorem{lemma}[theorem]{Lemma}
\newtheorem{corollary}[theorem]{Corollary}
\theoremstyle{definition}
\newtheorem{definition}[theorem]{Definition}
\newtheorem{remark}[theorem]{Remark}
\newtheorem{example}[theorem]{Example}
\newtheorem{condition}[theorem]{Condition}
\newcommand{\DS}{\displaystyle}
\newcommand{\SC}{\scriptstyle}
\newcommand{\SSC}{\scriptscriptstyle}
\DeclareMathOperator{\Aut}{Aut}
\DeclareMathOperator{\End}{End}
\DeclareMathOperator{\Frob}{Frob}
\DeclareMathOperator{\Gal}{Gal}
\DeclareMathOperator{\Hom}{Hom}
\DeclareMathOperator{\id}{id}
\renewcommand{\Im}{\mathop{\rm Im}}
\DeclareMathOperator{\iHom}{\mathcal{H}om}
\DeclareMathOperator{\Ker}{Ker}
\DeclareMathOperator{\Lie}{Lie}
\DeclareMathOperator{\QHom}{QHom}
\DeclareMathOperator{\QEnd}{QEnd}
\DeclareMathOperator{\ord}{ord}
\DeclareMathOperator{\rk}{rk}
\DeclareMathOperator{\Spec}{Spec}
\DeclareMathOperator{\Spf}{Spf}
\newcommand{\ra}{\rightarrow}
\newcommand{\hra}{\hookrightarrow}
\newcommand{\thra}{\twoheadrightarrow}
\newcommand{\lra}{\longrightarrow}
\newcommand{\lrai}{\overset{\sim}{\lra}}
\newcommand{\dbl}{{\mathchoice{\mbox{\rm [\hspace{-0.15em}[}}
                              {\mbox{\rm [\hspace{-0.15em}[}}
                              {\mbox{\scriptsize\rm [\hspace{-0.15em}[}}
                              {\mbox{\tiny\rm [\hspace{-0.15em}[}}}}
\newcommand{\dbr}{{\mathchoice{\mbox{\rm ]\hspace{-0.15em}]}}
                              {\mbox{\rm ]\hspace{-0.15em}]}}
                              {\mbox{\scriptsize\rm ]\hspace{-0.15em}]}}
                              {\mbox{\tiny\rm ]\hspace{-0.15em}]}}}}
\newcommand{\dpl}{{\mathchoice{\mbox{\rm (\hspace{-0.15em}(}}
                              {\mbox{\rm (\hspace{-0.15em}(}}
                              {\mbox{\scriptsize\rm (\hspace{-0.15em}(}}
                              {\mbox{\tiny\rm (\hspace{-0.15em}(}}}}
\newcommand{\dpr}{{\mathchoice{\mbox{\rm )\hspace{-0.15em})}}
                              {\mbox{\rm )\hspace{-0.15em})}}
                              {\mbox{\scriptsize\rm )\hspace{-0.15em})}}
                              {\mbox{\tiny\rm )\hspace{-0.15em})}}}}
\newcommand{\alg}{\mathrm{alg}}
\newcommand{\ab}{\mathrm{ab}}
\newcommand{\crys}{{\rm crys}}
\newcommand{\sep}{{\rm sep}}
\renewcommand{\ss}{{\rm ss}}
\newcommand{\tor}{{\rm tor}}
\newcommand{\ur}{{\rm ur}}
\newcommand{\zb}{\dbl z \dbr}     %   [[z]]
\newcommand{\zp}{\dpl z \dpr}     %   ((z))
\newcommand{\zf}{\frac{1}{z}}       %    1/z
\newcommand{\zz}{z-\zeta}          %z-zeta
\newcommand{\zzb}{\dbl z-\zeta \dbr}  
\newcommand{\zzp}{\dpl z-\zeta \dpr}
\newcommand{\zzf}{\frac{1}{z-\zeta}}
\newcommand{\sig}{\sigma}
\renewcommand{\phi}{\varphi}
\newcommand{\lam}{\lambda}
\newcommand{\hsig}{\wh{\sig}}
\let\setminus\smallsetminus
\renewcommand{\ast}{^{\SC*}}
\newcommand{\wc}{\check}
\newcommand{\es}{\enspace}
\newcommand{\inv}{^{\SSC-1}}
\newcommand{\col}{\colon}
\newcommand{\ul}[1]{{\underline{#1\hspace{-0.18em}}\hspace{0.18em}}}
\newcommand{\ol}[1]{{\overline{#1}}}
\newcommand{\wh}[1]{{\hat{#1}}}
\newcommand{\wt}[1]{{\tilde{#1}}}
\newcommand{\ot}{\otimes}
\newcommand{\op}{\oplus}
\newcommand{\ilim}[1][]{\ifthenelse{\equal{#1}{}}% falls Argument leer
{\DS \lim_{\longleftarrow}}%                         verwende niedrige Version
{\DS \lim_{\underset{#1}{\longleftarrow}}}%  sonst:  verwende Argument
}
\newcommand{\dlim}[1][]{\ifthenelse{\equal{#1}{}}% falls Argument leer
{\DS \lim_{\longrightarrow}}%                        verwende niedrige Version
{\DS \lim_{\underset{#1}{\longrightarrow}}}% sonst:  verwende Argument
}
\edef\csname rm\@Alph\@tempcnta\endcsname{\noexpand\mathrm{\@Alph\@tempcnta}}
\edef\csname s\@Alph\@tempcnta\endcsname{\noexpand\mathscr{\@Alph\@tempcnta}}
\edef\csname b\@Alph\@tempcnta\endcsname{\noexpand\mathbb{\@Alph\@tempcnta}}
\edef\csname c\@Alph\@tempcnta\endcsname{\noexpand\mathcal{\@Alph\@tempcnta}}
\edef\csname rm\@alph\@tempcnta\endcsname{\noexpand\mathrm{\@alph\@tempcnta}}
\edef\csname fr\@alph\@tempcnta\endcsname{\noexpand\mathfrak{\@alph\@tempcnta}}
\edef\csname fr\@Alph\@tempcnta\endcsname{\noexpand\mathfrak{\@Alph\@tempcnta}}
\edef\csname bs\@Alph\@tempcnta\endcsname{\noexpand\boldsymbol{\@Alph\@tempcnta}}
\edef\csname bs\@alph\@tempcnta\endcsname{\noexpand\boldsymbol{\@alph\@tempcnta}}
\edef\csname bf\@Alph\@tempcnta\endcsname{\noexpand\mathbf{\@Alph\@tempcnta}}
\edef\csname bf\@alph\@tempcnta\endcsname{\noexpand\mathbf{\@alph\@tempcnta}}
\begin{document}
%%%%%%%%%%%%%%%%%%%%%%%%%%%%%%%%%%%%%%%%%%%%%%%%%%%%%%%%%%%%%%%%%%%%%%%%%%%%%%%%%%%%%%%%%
%% 基本情報

\author{Yoshiaki Okumura}
\title{Torsion of rank-two $A$-motives values in odd characteristic cyclotomic towers}
\date{}

\maketitle
%\thispagestyle{empty}

%%%%%%%%%%%%  Abstract %%%%%%%%%%%%%%%%%%%%%%%%%%%%%%%%%%%%%%%%%%%%%%%%%%%%%%%%%
\begin{abstract}
For rank-two $A$-motives defined over local fields with odd characteristic, we give an analogue of a theorem of Imai stating that abelian varieties with good reduction over $p$-adic fields have only finitely many torsion points values in cyclotomic towers.
This implies the finiteness of torsion points of abelian Anderson $A$-modules.
For rank-two Drinfeld $A$-modules over global function fields, 
we also give an analogue of a theorem of Ribet on torsion points of abelian varieties values in maximal cyclotomic extensions of number fields.
\end{abstract}

%%%%%%%%%%%%  脚注, 目次, 2010MSC, Keyword %%%%%%%%%%%%%%%%%%%%%%%%%%%%%%%%%%%

\pagestyle{myheadings}
\markboth{Y.\ Okumura}{Imai's theorem for rank-two $A$-motives}

\renewcommand{\thefootnote}{\fnsymbol{footnote}}
\footnote[0]{2020 Mathematics Subject Classification:\ Primary 11G09, 11R58 ;\ Secondary 11F80, 14G05.}
\renewcommand{\thefootnote}{\arabic{footnote}}
\renewcommand{\thefootnote}{\fnsymbol{footnote}}
\footnote[0]{Keywords:\ $A$-motives, abelian Anderson $A$-modules, Drinfeld $A$-modules, Hodge-Pink theory.}
\renewcommand{\thefootnote}{\arabic{footnote}}

\setcounter{tocdepth}{1}
\tableofcontents  %目次

%2020 MSC

%11F80  	Galois representations
%11R58  	Arithmetic theory of algebraic function fields 
%11G09  	Drinfel'd modules; higher-dimensional motives
%11G30     Curves of arbitrary genus or genus \neq 1 over global fields
%11G40    L-functions of varieties over global fields; BSD conjecture
%11M06    ζ(s) and L(s)
%11M23    Nonreal zeros of ζ(s) and L; Riemann and other hypothesis
%11N05    Distribution of primes
%13A35  	Characteristic $p$ methods (Frobenius endomorphism) and reduction to characteristic $p$
%14G05    Rational Points

%%%%%%%%%%%%%%%%%%%%%%%%%%%%%%%%%%%%%%%%%%%%%%%%%%%%%%%%%%%%%%%%%%%%%%%%%%%%%%%%%%%%%%%%%%%%%%%%%%%%%%%%%%%%%%%%%%%%%%%%%%%%
\section{Introduction}

\subsection{Motivation} 
Let us begin with a review of Imai's theorem and its generalizations.
For an abelian variety $\cA$ defined over a finite extension $K$ of the $p$-adic number field $\bQ_p$ and an extension field $\cK/K$, 
the group $\cA(\cK)$ of $\cK$-valued points is one of the central considerations in arithmetic geometry. 
Mattuck proved in \cite{Mat55} that its torsion subgroup $\cA(\cK)_\tor$ is finite if $\cK/K$ is a finite extension.
Thus, for various extensions $\cK/K$ with infinite degree, it is natural to ask whether the torsion subgroup $\cA(\cK)_\tor$ is finite or not.
 Imai \cite{Ima75} proved that $\cA(\cK)_\tor$ is finite if $\cA$ has potentially good reduction and $\cK=K(\mu_{p^\infty})$, where $\mu_{p^\infty}$ is the set of all $p$-power roots of unity.
Kubo and Taguchi \cite{KT13} gave a finiteness theorem on \'etale cohomology groups of proper smooth varieties with potentially good reduction, which implies 
a generalization of Imai's theorem for the field $\cK=K(K^{1/p^\infty})$, where $K^{1/p^\infty}$ is the set of all $p$-power roots of all elements in $K$.
Other generalizations are given by Ozeki \ \cite{Oze20, Oze23}; he replaces $K(\mu_{p^\infty})$ by some extensions of $K$  coming from Lubin-Tate formal groups with respect to uniformizing parameters with mild conditions.  

Our aim is to establish a positive characteristic analogue of Imai's theorem.
In function field arithmetic, \textit{Drinfeld $A$-modules} and \textit{abelian Anderson $A$-modules} (Definition \ref{def.And}) play a role of elliptic curves and higher-dimensional abelian varieties, respectively; see \cite{Dri74, DH87, And86, Har19}.
It is also known that  an equal characteristic analogue of $\bQ_p(\mu_{p^\infty})$ is given by \textit{$z$-adic cyclotomic extensions} explained below (cf.\ Example \ref{example.cyclotomic} and \cite[\S 1.3]{Har09}).
It should be emphasized that, unlike abelian varieties,  
abelian Anderson $A$-modules 
can be embedded fully faithfully into a category of more general 
objects so called \textit{$A$-motives} (Definition \ref{def.Amot}), which
 is originally introduced by Anderson \cite{And86} as \textit{$t$-motives}.
 This allows us to handle the finiteness of torsion points of abelian Anderson $A$-modules by Galois representations attached to $A$-motives. 
 In this paper, under some assumptions, 
we give a finiteness theorem for Galois representations of $A$-motives and torsion points values in $z$-adic cyclotomic extensions.

\subsection{Notation}
We will use the following notation in this paper, except in \S 5.

Let $p$ be a prime number and $\bF_q$ a finite field with $q$ elements of characteristic $p$.
Let $Q$ be a global function field with constant field  $\bF_q$.
Fix a place $\infty$ of $Q$ and denote by $A$ the ring of elements of $Q$ regular outside $\infty$.
Then other places of $Q$ are identified with (non-zero) primes of $A$.
For each prime $\frl$ of $A$, 
let $\ord_\frl$ be the normalized valuation of $Q$ associated to $\frl$.
We write $\bF_\frl=A/\frl$ for the residue field at $\frl$ and put $\deg\frl:=[\bF_\frl:\bF_q]$. 
We also denote by 
$\DS A_\frl$
%$=\varprojlim A/\frl^n$
 the $\frl$-adic completion of $A$ and by $Q_\frl$ its field of fractions.

Let $L$ be a local field containing $\bF_q$.
Denote by $R$ its valuation ring, and by $k$ its residue field.
Let $\gamma \colon A \to R$ be a fixed injective $\bF_q$-algebra homomorphism.
We regard $L$ as an \textit{$A$-field} via $\gamma\colon A\to R\subset L$.
Define the \textit{$A$-field characteristic} of $L$ by the kernel
\[
\frp:=\Ker(A\overset{\gamma}{\to} R\thra k)
\]
  of the composite of $\gamma$ and the reduction map $R\thra k$.
 We set $\wh q:=\#\bF_\frp=q_{}^{\deg \frp}$.
We take a uniformizing parameter $z\in Q$ at $\frp$ (i.e., an element with $\ord_\frp(z)=1$).
 This allows us to identify $A_\frp=\bF_\frp\zb$ and $Q_\frp=\bF_\frp\zp$.
Set $\zeta:=\gamma(z)\in R$.
By continuity,  $\gamma$ can be extended to $\gamma\colon Q_\frp\to L$ whose image is 
\[
F_\frp:=\gamma(Q_\frp)=\bF_\frp\dpl \zeta \dpr \subset L.
\]
Fix an algebraic closure $L^\alg$ of $L$ and write $F_\frp^\sep\subset L^\sep$ for the separable closures of $F_\frp$ and $L$ in $L^\alg$, respectively.
Let $\cG_{L}=\Gal(L^\sep/L)$ be the absolute Galois group of $L$ and write 
$\cI_L$ for the inertia subgroup, that is the kernel of $\cG_L\thra \cG_k:=\Gal(k^\sep/k)$.

\subsection{$z$-adic cyclotomic theory}
%An equal characteristic analogue of the cyclotomic tower $\bQ_p(\mu_{p^\infty})$ of $\bQ_p$ is as follows (cf.\ Example \ref{example.cyclotomic} and \cite{Har09}).
For the element $\zeta=\gamma(z)\in R$, 
 let $\{\ell_n\}_{n=0}^\infty$ be a system of elements of $F_\frp^\sep$ such that 
$
\ell_0^{\wh{q}-1}=-\zeta\es\es \text{and}\es\es \ell_n^{\wh{q}}+\zeta \ell_n=\ell_{n-1}.
$
We call
\[
F_{\frp,\infty}:=F_\frp(\ell_n \colon n\in \bZ_{\geq 0})
\]
the \textit{$z$-adic cyclotomic extension} of $F_\frp$.
 It is 
a Lubin-Tate extension of $F_\frp$ associated to the uniformizing parameter $\zeta=\gamma(z)$ of $F_\frp$
and so is an analogue of $\bQ_p(\mu_{p^\infty})$.
We also put
\[
L_\infty:=LF_{\frp,\infty}=L(\ell_n \colon n\in \bZ_{\geq 0}).
\]
This is abelian over $L$ and so $\cG_{L_\infty}=\Gal(L^\sep/L_\infty)$ is a normal subgroup of $\cG_L$.

For the power series 
$
\ell^+:=\sum_{n=0}^\infty \ell_nz^n \in F_\frp^\sep \zb,
$
 there is an isomorphism of topological groups called the \textit{$z$-adic cyclotomic character}
\[
\chi_z\colon \Gal(F_{\frp,\infty}/F) \overset{\sim}{\lra} \bF_\frp\zb^\times=A_\frp^\times
\]
determined by $g\cdot\ell^+:=\sum_{n=0}^\infty g(\ell_n)z^n=\chi_z(g) \ell^+$ in $F_\frp^\sep\dbl z \dbr$ for all $g\in \Gal(F_{\frp,\infty}/F_\frp)$.
This plays the role of the $p$-adic cyclotomic character $\chi_p\colon \Gal(\bQ_p(\mu_{p^\infty})/\bQ_p)\overset{\sim}{\to} \bZ_p^\times$.
So the one-dimensional $Q_\frp$-representation $Q_\frp(n):=Q_\frp\cdot \mathbf{e}$ 
whose $\cG_{F_\frp}$-action is given by 
\[
g\cdot\mathbf{e}=\chi_z(g)^n\mathbf{e}\es\es \text{for}\es\es g\in \cG_{F_\frp}
\]
 is an analogue of 
the $n$-th Tate twist $\bQ_p(n)$.

It is known (Example \ref{example.cyclotomic}) that $Q_\frp(n)$ is a \textit{$z$-adic crystalline representation}
in \textit{Hodge-Pink theory}, an analogue of $p$-adic Hodge theory.
For our study, it is important to compute the \textit{weights} of $A$-motives and $z$-adic crystalline representations (Definitions \ref{definition.wmot} and \ref{definition.wd}).
To ensure the integrality of the weight of $Q_\frp(n)$ (see Lemma \ref{lemma.intweight}), 
we consider the following condition.

\begin{condition}\label{condition.main}
The integer $\sum_{\frl\neq\frp}\deg\frl\ord_\frl(z)$ is divisible by $\deg\frp$, 
%The uniformizing parameter $z\in Q$ at $\frp$ satisfies
%\[
%\sum_{\frl\neq \frp}\frac{\deg \frl}{\deg \frp}\ord_{\frl}(z)\in \bZ,
%\]
 where $\frl$ runs through all primes of $A$ with $\frl\neq \frp$.
\end{condition}

In particular, Condition \ref{condition.main} holds if $\frp=(z)$, because 
$\sum_{\frl\neq\frp}\deg\frl\ord_\frl(z)=0$ in this case.

%We are interested in the finiteness of ``$L_\infty$-rational torsion points'' of $A$-motives and abelian Anderson $A$-modules.

\subsection{Main theorem}
Let $\ul{M}$ be an $A$-motive over $L$.
Then for each prime $\frl$ of $A$, one can get a free $A_\frl$-module of finite rank and a finite-dimensional $Q_\frl$-vector space,
\[
\wc T_\frl \ul{M}\es\es\es \text{and}\es\es\es \wc V_\frl\ul{M}:=\wc T_\frl\ul{M}\otimes_{A_\frl}Q_\frl,
\]
called the \textit{$\frl$-adic realization} and the \textit{rational $\frl$-adic realization} of $\ul M$, respectively.
They have a continuous $\cG_L$-action.
We put
\[
\wc V\ul{M}:=\prod_{\frl}\wc V_\frl\ul{M} \es\es\supset\es\es \wc T\ul{M}:=\prod_{\frl}\wc T_\frl\ul{M}, 
\]
where $\frl$ runs through all primes of $A$. 
Here $\cG_L$ acts diagonally on both $\wc V\ul{M}$ and $\wc T\ul{M}$. 
Our central interest is 
whether the $\cG_{L_\infty}$-fixed submodule 
$(\wc V\ul{M}/\wc T\ul{M})^{\cG_{L_\infty}}$ of $\wc V\ul{M}/\wc T\ul{M}$ is finite or not.
The next is our main theorem.

\begin{theorem}[{$=$ Theorem \ref{theorem.main}}]\label{thm.intro}
Let $\ul{M}$ be an
$A$-motive over $L$ of rank two with good reduction and 
$z\in Q$ a uniformizing parameter at $\frp$ satisfying  Condition \ref{condition.main}.
If $p\neq 2$ and $\ul{M}$ has no integral weights (in the sense of Definition \ref{definition.wmot}), then $(\wc V\ul{M}/\wc T\ul{M})^{\cG_{L_\infty}}$ is finite.
\end{theorem}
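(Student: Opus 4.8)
The plan is to reduce the finiteness of $(\wc V\ul M/\wc T\ul M)^{\cG_{L_\infty}}$ to a statement about the mod-$\frl$ and $\frl$-adic Galois representations attached to $\ul M$, and then to exploit the weight hypothesis through Hodge–Pink theory. First I would observe that $(\wc V\ul M/\wc T\ul M)^{\cG_{L_\infty}}$ decomposes as a restricted product over the primes $\frl$ of $A$ of the groups $(\wc V_\frl\ul M/\wc T_\frl\ul M)^{\cG_{L_\infty}}$, so the claim splits into two parts: (i) for all but finitely many $\frl$ the group $(\wc V_\frl\ul M/\wc T_\frl\ul M)^{\cG_{L_\infty}}$ vanishes, and (ii) for each individual $\frl$ this group is finite. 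Part (ii) is the soft part: since $\wc T_\frl\ul M$ is a free $A_\frl$-module of rank two, finiteness of the $\cG_{L_\infty}$-fixed points of $\wc V_\frl\ul M/\wc T_\frl\ul M$ is equivalent (by a standard Tate–module argument, using that $\wc V_\frl\ul M$ is finite-dimensional) to the statement that $\wc V_\frl\ul M$ contains no nonzero $\cG_{L_\infty}$-subrepresentation, i.e.\ that $(\wc V_\frl\ul M)^{\cG_{L_\infty}}=0$. So the whole theorem comes down to showing $(\wc V_\frl\ul M)^{\cG_{L_\infty}}=0$ for every $\frl$, and moreover that the "control" of the integral (residual) fixed points is uniform enough to give vanishing of $(\wc T_\frl\ul M\otimes \bF_\frl)^{\cG_{L_\infty}}$ for almost all $\frl$.

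Next I would analyze the possible $\cG_{L_\infty}$-stable subspaces of $\wc V_\frl\ul M$ using the rank-two hypothesis. Because $\ul M$ has rank two, $\wc V_\frl\ul M$ is two-dimensional over $Q_\frl$, so a nonzero proper $\cG_{L_\infty}$-subrepresentation is a line $\cL\subset\wc V_\frl\ul M$. Since $\cG_{L_\infty}$ is normal in $\cG_L$ with procyclic (indeed abelian, pro-$p$ up to the $\mu$-part) quotient $\Gal(L_\infty/L)$ surjecting onto $A_\frp^\times$ via $\chi_z$, the set of such lines is permuted by $\cG_L$; a Clifford-theory argument shows that either $\wc V_\frl\ul M|_{\cG_{L_\infty}}$ is a sum of characters (so $\ul M$ becomes, after restriction, an extension or sum of rank-one pieces), or there are no stable lines at all and hence no nonzero stable subspace. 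In the first case I would argue that the weights of the rank-one constituents are forced to be integers: a rank-one $A$-motive with good reduction has, after the cyclotomic twist absorbed by $L_\infty$, a $z$-adic crystalline realization whose Hodge–Pink weight is the single jump of its Hodge–Pink filtration, which is an integer; pushing this back through the weight-of-$A$-motive dictionary (Definitions \ref{definition.wmot}, \ref{definition.wd}) and using Condition \ref{condition.main} to guarantee integrality of the relevant cyclotomic contribution (Lemma \ref{lemma.intweight}), one sees that $\ul M$ would then have an integral weight, contradicting the hypothesis. This is where $p\neq 2$ enters: the argument that a $\cG_{L_\infty}$-stable line descends to a rank-one sub-$A$-motive (rather than merely giving a dihedral/quaternionic configuration with a stable pair of lines) uses that the relevant $2$-torsion phenomena are absent, i.e.\ that $\Gal(L_\infty/L)$ acting on the set of two lines cannot be detected by a quadratic character landing in the wrong place when $p$ is odd.

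Finally, for part (i) — vanishing modulo $\frl$ for almost all $\frl$ — I would use that good reduction makes $\wc T_\frl\ul M$ unramified for all $\frl\neq\frp$ (and crystalline at $\frp$), so the residual representation $\bar\rho_\frl$ is unramified away from $\frp$, and for all but finitely many $\frl$ the image of $\bar\rho_\frl$ is "as large as possible" given the weights; a fixed nonzero vector mod $\frl$ under $\cG_{L_\infty}$ would again force a residual rank-one piece, and by the same weight computation this can happen for only finitely many $\frl$ (the exceptional set being governed by congruences among the non-integral weights). Assembling (i) and (ii): for the finitely many bad $\frl$ we get finiteness of each local contribution from (ii), and for the rest the local contribution is zero, so the restricted product is finite. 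The main obstacle I anticipate is the descent step in the middle paragraph — rigorously upgrading "$\cG_{L_\infty}$-stable line in the $\frl$-adic realization" to "rank-one sub-$A$-motive with controllable Hodge–Pink weight," uniformly in $\frl$, and extracting from the no-integral-weights hypothesis a genuine contradiction via Hodge–Pink theory rather than merely a statement about $\frl$-adic Hodge–Tate–type weights. Handling the prime $\frl=\frp$ separately, where the representation is crystalline rather than unramified and the cyclotomic extension $L_\infty$ interacts nontrivially with the $\frp$-adic Hodge theory, will require the bulk of the technical work.
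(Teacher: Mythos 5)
Your decomposition of the problem is correct and agrees with the paper: reduce to showing $(\wc V_\frl\ul M/\wc T_\frl\ul M)^{\cG_{L_\infty}}$ is finite for every $\frl$ and zero for almost all $\frl$, with the hard case being $\frl=\frp$. However, the central step — showing $(\wc V_\frp\ul M)^{\cG_{L_\infty}}=0$ — is left as an acknowledged gap, and the Clifford-theoretic sketch you offer would not close it. The obstacle is not merely "descent of a $\cG_{L_\infty}$-stable line to a rank-one sub-$A$-motive." Since $\cG_{L_\infty}$ acts \emph{trivially} on $(\wc V_\frp\ul M)^{\cG_{L_\infty}}$, the relevant decomposition is as a $\cG_L$-representation, and the paper's argument takes a nonzero $\cG_L$-\emph{irreducible} crystalline subrepresentation $V$ of $(\wc V_\frp\ul M)^{\cG_{L_\infty}}$ (Proposition \ref{prop.closed} supplies closedness of crystalline representations under subobjects, via the local-shtuka argument of Lemma \ref{lemma.sub}). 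When $\dim V=1$ the one-dimensional classification (Lemma \ref{lem.char}) applies directly; when $\dim V=2$ the paper invokes its genuinely hard result (Lemma \ref{lemma.key}), which runs through complex multiplication by an order in a quadratic extension $E/Q_\frp$, Lubin–Tate characters, and the local Artin map to conclude $V|_{\cG_{L'}}\cong Q_\frp(n)^{\oplus 2}$. Your sketch of "stable lines permuted by $\cG_L$, ruled out by $p\neq 2$ being odd" does not produce that conclusion; there is no a priori reason for a $\cG_{L_\infty}$-stable line to exist or to be defined over a controlled extension, and indeed when $V$ is $\cG_L$-irreducible there is no $\cG_L$-stable line at all.

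Relatedly, your account of where $p\neq 2$ enters is not quite the one that does the work. In the paper, $p\neq 2$ is used inside Lemma \ref{lemma.key} to guarantee that the quadratic CM field $E/Q_\frp$ is \emph{separable} (hence Galois), so that $\psi E$ is independent of the embedding $\psi$; for $p=2$ a degree-two extension of $Q_\frp$ can be purely inseparable and the entire CM/Lubin–Tate machinery breaks. The paper's Example \ref{ex.C2} exhibits what goes wrong when $\rk\ul M=p$, and shows the hypothesis is sharp. Your "dihedral/quaternionic line configurations, quadratic characters in the wrong place" picture is a plausible heuristic but is not the mechanism. Finally, for $\frl\neq\frp$ your argument is on the right track (unramifiedness from good reduction plus a Frobenius-eigenvalue constraint from the weights), but the step you need is the simple observation that after a finite extension $L_\infty/L$ is totally ramified, so that a $\cG_{L_\infty}$-fixed vector in an unramified representation is already $\cG_L$-fixed and forces $1$ to be a root of $P(X;\ul M)$ — that is the content of Proposition \ref{proposition.frl}; speaking of "congruences among the non-integral weights" and "residual rank-one pieces" is looser than what's needed and leaves the almost-all-$\frl$ vanishing unjustified.
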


The finiteness of  $(\wc V_\frl\ul{M}/\wc T_\frl\ul{M})^{\cG_{L_\infty}}$ is equivalent to $(V_\frl\ul{M})^{\cG_{L_\infty}}=0$ (see Lemma \ref{lemma.fix}),  so Theorem \ref{thm.intro} follows from the next two assertions: 
\begin{itemize}
\item[(1)] $\prod_{\frl\neq\frp}(\wc V_\frl\ul{M}/\wc T_\frl\ul{M})^{\cG_{L_\infty}}$ is finite, and
%\item[(2)] $(\wc V_\frl\ul{M})^{\cG_{L_\infty}}=0$ for all $\frl\neq \frp$, and
\item[(2)] $(\wc V_\frp\ul{M})^{\cG_{L_\infty}}=0$. 
\end{itemize} 
As in previous works on abelian varieties over $p$-adic fields, (1) is easier than (2); in fact, it can be proved under mild conditions than Theorem \ref{thm.intro}; see Proposition \ref{proposition.frl}.

The strategy to prove (2) is as follows.
Assume that $(\wc V_\frp\ul{M})^{\cG_{L_\infty}}$ is non-zero.
Then it has dimension $\leq 2$.
By the theory of crystalline representations (Lemmas \ref{lem.char} and \ref{lemma.key}), replacing $L$ by a finite separable extension, we show that 
$(\wc V_\frp\ul{M})^{\cG_{L_\infty}}$ is a direct sum of $Q_\frp(n)$.
By Condition \ref{condition.main}, it follows that $\ul{M}$ has at least one integral weight; however,  $\ul{M}$ has no integral weights by assumption. 
Thus we get $(\wc V_\frp\ul{M})^{\cG_{L_\infty}}=0$.

\begin{remark}
The reason why we assume $\ul M$ is of rank two is that    the key lemma (Lemma \ref{lemma.key}) can be applied to only two-dimensional crystalline representations, 
although the original of this lemma (\cite[Lemma 3.5]{KT13} and \cite[Lemma 2.5]{Oze20}) holds without assumptions on dimension. 
This restriction comes from 
 the absence of an analogue of $p$-adic logarithmic function providing some useful facts (e.g.,\ \cite[Lemma 2.4]{Oze20}).
So, at this time, it is not easy to extend Theorem \ref{thm.intro} to higher-rank $A$-motives.
%We believe the assumption on dimension in Lemma \ref{lemma.key} is unnecessary, but we have not been able to remove it at this time.

On the other hand, 
if $\ul M$ has either rank equal to  $p$ or integral weights, then we can construct examples of $\ul{M}$ having infinite $L_\infty$-valued torsion; see Examples \ref{ex.C1} and \ref{ex.C2}.
Thus it is essential in  Theorem \ref{thm.intro} to assume that $p\neq 2$ and $\ul M$ has no integral weights.
\end{remark}

Abelian Anderson $A$-modules $\ul{G}$ over $L$ can be embedded into the category of $A$-motives over $L$ by a fully faithful functor $\ul G\mapsto \ul{M}(\ul G)$ and hence 
 Theorem \ref{thm.intro} implies  finiteness of $L_\infty$-valued torsion points $\ul{G}(L_\infty)_\tor$:

\begin{corollary}[{$=$ Corollary \ref{corollary.Amod}}]\label{int.cor}
Let $\ul{G}$ be an abelian Anderson $A$-module of rank two over $L$ with good reduction and $z\in Q$ a uniformizing parameter at $\frp$ satisfying  Condition \ref{condition.main}.
If $p\neq 2$ and $\ul{M}(\ul{G})$ has no integral weights, then 
$\ul{G}(L_\infty)_{\tor}$ is finite.
In particular, if $p\neq 2$ and $\ul{G}$ is a Drinfeld $A$-module of rank two with good reduction, then $\ul G(L_\infty)_\tor$ is finite. 
\end{corollary}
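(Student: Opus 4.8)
\emph{The plan} is to deduce Corollary~\ref{corollary.Amod} from Theorem~\ref{thm.intro} by transporting the torsion of $\ul G$ through the fully faithful functor $\ul G\mapsto\ul M(\ul G)$; everything then reduces to a comparison of realizations and a check that the hypotheses pass to $\ul M(\ul G)$. First I would use that this functor is compatible with $\frl$-adic realizations: for each prime $\frl$ of $A$ there is a canonical $\cG_L$-equivariant isomorphism of $\wc T_\frl\ul M(\ul G)$ with the $\frl$-adic Tate module of $\ul G$ (\cite{And86,Har19}), and hence a $\cG_L$-equivariant isomorphism $\ul G[\frl^\infty](L^\sep)\cong\wc V_\frl\ul M(\ul G)/\wc T_\frl\ul M(\ul G)$. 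For $\frl=\frp$ one should observe that over the local field $L$ the endomorphism of $\ul G$ given by any nonzero $a\in A$ is separable --- its differential differs from $\gamma(a)\cdot\id$ by a nilpotent operator and $\gamma$ is injective --- so $\ul G[\frp^n](L^\sep)$ still has full cardinality, the drop of the $\frp$-adic rank occurring only over the residue field. Since every torsion point of $\ul G(L^\sep)$ is annihilated by some nonzero $a\in A$ and $aA$ is a product of finitely many primes, the Chinese remainder theorem gives a $\cG_L$-equivariant identification
\[
\ul G(L^\sep)_\tor\;\cong\;\bigoplus_\frl\ul G[\frl^\infty](L^\sep)\;\subseteq\;\prod_\frl\bigl(\wc V_\frl\ul M(\ul G)/\wc T_\frl\ul M(\ul G)\bigr)=\wc V\ul M(\ul G)/\wc T\ul M(\ul G).
\]
Taking $\cG_{L_\infty}$-invariants, which commutes with direct sums and products, yields an injection $\ul G(L_\infty)_\tor\hra\bigl(\wc V\ul M(\ul G)/\wc T\ul M(\ul G)\bigr)^{\cG_{L_\infty}}$.

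Next I would check that $\ul M(\ul G)$ satisfies the hypotheses of Theorem~\ref{thm.intro}. Its rank as an $A$-motive is $\rk_{A_\frl}\wc T_\frl\ul M(\ul G)$, equal to the rank of the abelian Anderson $A$-module $\ul G$, hence two. If $\ul G$ has good reduction, i.e.\ extends to an abelian Anderson $A$-module over $R$, then the motive of this $R$-model is a good-reduction model of $\ul M(\ul G)$. The remaining hypotheses --- $p\neq 2$, Condition~\ref{condition.main} for $z$, and the absence of integral weights for $\ul M(\ul G)$ --- are exactly the assumptions of the corollary. Theorem~\ref{thm.intro} then gives finiteness of $\bigl(\wc V\ul M(\ul G)/\wc T\ul M(\ul G)\bigr)^{\cG_{L_\infty}}$, and by the injection above $\ul G(L_\infty)_\tor$ is finite.

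For the final assertion, a Drinfeld $A$-module of rank two is in particular an abelian Anderson $A$-module of rank two (of dimension one) with the same notion of good reduction, so by the previous step it suffices to check that its $A$-motive has no integral weights. This is classical: the $A$-motive of a rank-$r$ Drinfeld $A$-module is pure in Anderson's sense, and all of its weights (in the sense of Definition~\ref{definition.wmot}) are equal to $1/r$; for $r=2$ the unique weight $1/2$ is not an integer, so the hypotheses of the first part are met (recall $p\neq 2$ is assumed).

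\emph{The main obstacle} is not a genuine one: once Theorem~\ref{thm.intro} is available, the corollary is essentially formal. The two points deserving care are (i) confirming the precise normalization of the $\cG_L$-equivariant comparison between the torsion of $\ul G$ and $\wc V\ul M(\ul G)/\wc T\ul M(\ul G)$ --- standard in the literature on $A$-motives of Anderson modules, but to be cited correctly --- and (ii) in the Drinfeld case, matching the classical computation that the $A$-motive of a rank-$r$ Drinfeld module is pure of weight $1/r$ with Definition~\ref{definition.wmot}.
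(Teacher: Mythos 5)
Your proposal is correct and follows essentially the same route as the paper: reduce to Theorem \ref{theorem.main} via the comparison of $\ul G[\frl^\infty]$ with $\wc V_\frl/\wc T_\frl$ of the associated motive, and in the Drinfeld case invoke the weights $\tfrac1r=\tfrac12\notin\bZ$. The only cosmetic difference is that the paper applies the theorem to the dual motive $\ul M(\ul G)^\vee$ (whose realization is literally $V_\frl\ul G$, making the Galois-equivariance of the comparison immediate), whereas you work with $\ul M(\ul G)$ itself; since dualization negates the weights and preserves rank and good reduction, both satisfy the hypotheses and the two arguments are interchangeable.
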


\begin{remark}
It is easy to see  that $\ul G(L)_\tor$ is also finite if $\ul G$ is  as in Corollary \ref{int.cor}.
This gives a higher-dimensional generalization of Poonen's result \cite[\S 2, Proposition 1]{Poo97} for Drinfeld $A$-modules.
\end{remark}

%\subsection{Application}
As an application of the above, we study torsion points of Drinfeld $A$-modules in a global setting. 
For each abelian variety $\cA$ over a number field $K/\bQ$, Ribet shown in \cite[Appendix, Theorem]{KL81} that 
the torsion subgroup 
$\cA(K(\mu_\infty))_\tor$ is finite, where $\mu_\infty$ is the set of all roots of unity. 
When $Q=\bF_q(t)$ and $A=\bF_q[t]$ with $p\neq 2$, we prove an analogue of Ribet's theorem (Theorem \ref{theorem.Ribet}) for rank-two Drinfeld $A$-modules over global function fields.
In this theorem, $\mu_\infty$ is replaced by torsion points of  the \textit{Carlitz module}.

\subsection{Organization}
In \S 1, we have prepared notation and stated our theorem.
In \S 2, we give a preliminary for $A$-motives and abelian Anderson $A$-modules.
In \S 3, we recall basics on equal characteristic crystalline representations and then study weights of them.
In \S 4, showing the key lemma (Lemma \ref{lemma.key}), we prove the main results.
In \S 5, we study torsion points of Drinfeld $A$-modules in a global setting. 

%%%%%%%%%%%%%%%%%%%%%%%%%%%%%%%%%%%%%%%%%%%%%%%%%%%%%%%%%%%%%%%%%%%%%%%%%%%%%%%%%%%%%%%%%%%%%%%%%%%%%%%%%%%%%%%%%%%%%%%%%%%%
\section{$A$-motives and abelian Anderson $A$-modules}

In this section, we review definitions and properties of $A$-motives and abelian Anderson $A$-modules.
We refer the details to 
\cite{Gaz24, Har19, HJ20}.

\subsection{Definitions}\label{subsec.Defi}
We set $A_L:=A\otimes_{\bF_q} L$, which is a Dedekind domain.
Denote by $\sigma$ the endomorphism of $A_L$ determined by $\sig(a\otimes \lam)=a\otimes \lam^q$ for any $a\in A$ and $\lam\in L$.
For an $A_L$-module $M$, we put $\sig\ast M:=M\otimes_{A_L, \sig}A_L$.
Let $J_L \subset A_L$ be the ideal generated by $\{a\otimes 1-1\otimes \gamma(a) : a\in A\}$. 
Then $\gamma$ can be recovered as the homomorphism  $A\ra A_L/J_L\cong L$.
% and so $J_L$ is a maximal ideal.
%We note that for any ideal $\fra\subset A$, $\gamma(\fra)\cdot L=L$ if and only if $\fra+J_L=A_L$.

Since $J_L$ is invertible (see \cite[\S 1]{Har19}), for each positive integer $n$,  the $A_L$-module $J_L^{-n}$ makes sense and we have canonical inclusions $J_L^{-n} \hra J_L^{-(n+1)}$.
We consider the direct limit
\[
A_L[J_L^{-1}]:=\varinjlim_n A_L\ot_{A_L}J_L^{-n}
\]
taken over non-negative integers $n$.
All $J_L^{-n}$ are flat and direct limits preserve exact sequences, so the natural map $A_L\ra A_L[J_L^{-1}]$ is also flat.
For each $A_L$-module $M$, we put 
\[
M[J_L^{-1}]:=M\ot_{A_L}A_L[J_L^{-1}].
\]

\begin{definition}\label{def.Amot}
An \textit{$A$-motive of rank $r$ over $L$} is a pair $\ul{M}=(M, \tau_M)$ consisting of a projective $A_L$-module $M$ of constant rank $r$ and an isomorphism 
\[
\tau_M\colon \sig\ast M[J_L^{-1}]\overset{\sim}{\lra}M[J_L^{-1}].
\]
We put $\rk \ul{M}:=r$.
We say that $\ul{M}$ is \textit{effective} if $\tau_{M}(\sig\ast M)\subset M$.
Then $J_L^d\cdot M/\tau_M(\sig\ast M)=0$ for some $d\geq 0$ and so $M/\tau_M(\sig\ast M)$ is a finite-dimensional $L$-vector space.
Define the \textit{dimension} $\dim\ul{M}$ of an effective $A$-motive $\ul{M}$ to be the $L$-dimension of $M/\tau_M(\sig\ast M)$. 

A \textit{morphism of $A$-motives} $f\colon \ul{M}\to \ul{N}$ is an morphism of $A_L$-modules $f\colon M\to N$ such that $f\circ \tau_M=\tau_N\circ \sig\ast f.$

In general, for arbitrary $\bF_q$-algebra $S$ equipped with a (not necessarily injective) $\bF_q$-algebra homomorphism $A\to S$, one can define \textit{$A$-motives over $S$} in the same way; see \cite[\S 2]{Har19} for details.
\end{definition}

\begin{example}
Under the isomorphism 
$
\sig\ast A_L \overset{\sim}{\lra} A_L 
$ given by $c\ot 1\mapsto \sig(c)$, 
we may identify $\sig\ot\id$ with $\id\colon A_L\to A_L$.
Then the pair $\ul{A_L}=(A_L, \id)$ is an effective $A$-motive of rank one and  dimension zero. 
\end{example}

\begin{definition}
Let $\ul{M}$ and $\ul{N}$ be $A$-motives over $L$.
For each $L$-algebra $S$, the \textit{base change} 
$
\ul{M}_S:=(M\otimes_{A_L}A_S, \tau_{M}\ot \id)
$
is an $A$-motive over $S$.
 The \textit{direct sum} and \textit{tensor product} of $\ul{M}$ and $\ul{N}$ are defined as the $A$-motives 
$
\ul{M}\oplus \ul{N}:=(M\oplus N, \tau_M\oplus \tau_N)
$ and 
$
\ul{M}\ot\ul{N}:=(M\ot_{A_F}N, \tau_M\ot\tau_N)
$.
It is easy to see that 
$\rk \ul{M}_S=\rk \ul{M}, 
\rk(\ul{M}\oplus \ul{N})=\rk \ul{M}+\rk\ul{N}
$, and 
$
\rk(\ul{M}\ot\ul{N})=(\rk\ul{M})\cdot(\rk \ul{M})
$.

Under the identification
$\Hom_{A_L}(M,N)[J_L^{-1}] = \Hom_{A_L[J_L^{-1}]}(M[J_L^{-1}], N[J_L^{-1}])$,
we
 define the \textit{internal hom} $\iHom(\ul{M}, \ul{N})=(H,\tau_H)$ by the $A$-motive consisting of the $A_L$-module $H=\Hom_{A_L}(M,N)$ and the isomorphism
\[\tau_H\colon 
 \sig\ast H[J_L^{-1}]= \Hom_{A_L[J_L^{-1}]}(\sig\ast M[J_L^{-1}], \sig\ast N[J_L^{-1}]) \overset{\sim}{\lra} H[J_L^{-1}] 
\]
given by $ h\mapsto \tau_N\circ h \circ \tau_M^{-1}$.
We define the \textit{dual} of $\ul{M}$ by 
\[
\ul{M}^{\vee}:=\iHom(\ul{M}, \ul{A_L}).
\]
We have $\rk\iHom(\ul{M},\ul{N})=(\rk\ul{M})\cdot(\rk\ul{N})$ and hence $\rk\ul{M}=\rk\ul{M}^{\vee}$.
For  each integer $n$, the \textit{tensor power} $\ul{M}^{\ot n}$ of $\ul{M}$ is defined by $\ul{M}^{\ot 0}:=\ul{A_L}$,  $\ul{M}^{\ot n}:=\ul{M}^{\ot(n-1)}\ot \ul{M}$ if $n>0$, and $\ul{M}^{\ot n}:=(\ul{M}^\vee)^{\ot (-n)}$ if $n<0$.
\end{definition}

Now we recall the construction of Galois representations attached to $A$-motives.
Put
\[
A_{\frl, L}:=A_\frl\wh{\ot}_{\bF_q}L=\varprojlim_n(A/\frl^n)\ot_{\bF_q}L
\]
and $M_\frl:=M\ot_{A_L}A_{\frl, L}$.
Then $\tau_M$ induces an isomorphism
\[
\tau_M\colon \sig\ast M_\frl({L^\sep}) \overset{\sim}{\lra} M_\frl({L^\sep}),
\]
where $M_\frl(L^\sep)=M_\frl\ot_{L}L^\sep$.
For each $m\in M_\frl({L^\sep})$, 
we set $\sig\ast m:=m\ot 1\in \sig\ast M_\frl({L^\sep})$.
Then the \textit{$\frl$-adic realization} of $\ul{M}$ is defined by 
\[
\wc T_\frl\ul{M}:=\ul{M}_\frl(L^\sep)^\tau:=\{m\in M_\frl({L^\sep}) : \tau_M({\sig\ast m})=m\}, 
\]
which is free of rank $\rk \ul{M}$ over $A_\frl$ with a continuous $\cG_L$-action. 
It is known that the functor $\ul{M} \mapsto \wc T_\frl\ul{M}$ is exact.
We also define the \textit{rational $\frl$-adic realization} of $\ul{M}$ by 
\[
\wc V_\frl\ul{M}:=\wc T_\frl\ul{M}\ot_{A_\frl}Q_\frl.
\]
By construction, 
%for $A$-motives $\ul{M}$ and $\ul{N}$, 
we have $\cG_L$-equivariant isomorphisms
\[
\wc T_\frl (\ul{M}\oplus\ul{N})\cong \wc T_\frl\ul{M}\oplus \wc T_\frl\ul{N},\es\es \wc T_\frl(\ul{M}\ot\ul{N})\cong \wc T_\frl\ul{M}\ot_{A_\frl}\wc T_\frl\ul{N},
\]
and 
\[
 \wc T_\frl\ul{M}^\vee \cong \Hom_{A_\frl}(\wc T_\frl\ul{M}, A_\frl).
\]

The (rational) $\frl$-adic realization of $\ul{M}$ can be considered in terms of \textit{\'etale finite $\bF_q$-shtukas} (see \cite[\S 4 and \S 6]{Har19} for details).
Let $\fra$ be a non-zero ideal of $A$.
Then $\tau_M$ induces an isomorphism 
\[
\tau_{M/\fra M} \colon \sig\ast M/\fra M\ot_{A_L}A_{L^\sep} \overset{\sim}{\lra} M/\fra M\ot_{A_L}A_{L^\sep}
\]
and the $\tau$-invariants
\[
(\ul{M}/\fra\ul{M})(L^\sep)^\tau:=\{\ol{m}\in M/\fra M\ot_{A_L}A_{L^\sep} \colon \tau_{M/\fra M}(\sig\ast \ol{m})=\ol m\}
\]
is a free $A/\fra$-module of rank $\rk \ul{M}$ with a continuous $\cG_L$-action.
Now the natural surjections $M/\frl^{n+1}M \thra M/\frl^n M$ give rise to  an inverse system 
$\{(\ul{M}/\frl^n\ul{M})(L^\sep)^\tau\}$ and we get a $\cG_L$-equivariant isomorphism
\[
\wc T_\frl\ul{M} \overset{\sim}{\lra} \varprojlim_n (\ul{M}/\frl^n\ul{M})(L^\sep)^\tau.
\]
In particular, for each separable extension $L'/L$, we see that 
\[
\wc V_\frl\ul{M}^{\cG_{L'}}\cong \varprojlim_n(\ul{M}/\frl^n\ul{M})(L')^\tau\ot_{A_\frl}Q_\frl.
\]

Next, let $\bG_{a,L}=\Spec L[X]$ be the additive group over $L$.

\begin{definition}\label{def.And}
Let $r$ and $d$ be positive integers.
An {\it abelian Anderson $A$-module over $L$ of rank $r$ and dimension $d$} is a pair $\ul G=(G, \phi)$ consisting of an affine $\bF_q$-module scheme $G= \bG_{a,L}^d$ over $L$ and an $\bF_q$-algebra homomorphism $\phi\col A\ra \End_{\SSC L{\mbox{\tiny-groups}}, \bF_q{\mbox{\tiny-lin}}}(G); a\mapsto \phi_a$ such that  
\begin{itemize}
\item $(\Lie \phi_a-a)^d=0$ on $\Lie G$ for any $a\in A$,
\item the set of $\bF_q$-liner homomorphisms of $L$-group schemes
\[
M(\ul G):=\Hom_{\SSC{L\mbox{\tiny-groups},\ \bF_q\mbox{\tiny -lin.}}}(G,\bG_{a,L})
\]
  is a projective $A_L$-module of rank $r$ via $(a\ot \lam)\cdot m:= \lam\circ m\circ \phi_a$ for $m\in M(\ul{G})$, $a\in A$, and $\lam \in L$.
\end{itemize}
We write $\rk \ul{G}$ for the rank of $\ul{G}$.

Note that if $A=\bF_q[t]$ and $L$ is perfect,  then abelian Anderson $A$-modules coincide with \textit{abelian $t$-modules} in the sense of Anderson \cite[\S 1.1]{And86}.

One-dimensional abelian Anderson $A$-modules $\ul G$ over $L$ are called \textit{Drinfeld $A$-modules} over $L$.
To describe this, let $\tau$ be the relative $q$-th Frobenius of $\bG_{a, L}=\Spec L[X]$ given by $\tau(X)=X^q$ and consider the non-commutative polynomial ring $L\{\tau\}$ over $L$ in variable $\tau$ whose multiplication is defined so that $(b\tau^i)\cdot(c\tau^j)=bc^{q^i}\tau^{i+j}$ for $b,c\in L$.
This ring is isomorphic to $\End_{\SSC L{\mbox{\tiny-groups}}, \bF_q{\mbox{\tiny-lin}}}(\bG_{a,L})$ and  so $\ul G$ is determined by an $\bF_q$-algebra homomorphism
\[
\phi\colon A\to L\{\tau\}\ ;\  a\mapsto \gamma(a)+(\text{higher terms})
\]
such that $\Im \phi\not\subset L$.
If $\ul G$ is of rank $r$, then for each $a\in A$, we have 
\[
\phi_a=\gamma(a)+\cdots+\lambda_a \tau^{r\deg a}\es\es \text{with}\es\es \lambda_a\neq 0,
\]
where $\deg a=\dim_{\bF_q}(A/aA)$.
\end{definition} 

Let $\ul{G}=(G,\phi)$ be an abelian Anderson $A$-module over $L$.
Then for each $L$-algebra $S$, the set of $S$-valued points $G(S)$ becomes an $A$-module via $\phi$.
We write $\ul{G}(S)$ for this $A$-module.
We denote the torsion submodule of it by 
\[
\ul{G}(S)_\tor:=\{\lambda \in G(S) \colon \phi_a(\lambda)=0\ \text{for some non-zero}\ a\in A\}.
\] 
The \textit{$\frl$-adic Tate module} and the \textit{rational $\frl$-adic Tate module} of $\ul G$ are defied by 
\[
T_\frl\ul{G}:=\Hom_{A_\frl}(Q_\frl/A_\frl,\ \ul{G}(L^\sep))\es\es \text{and}\es\es V_\frl\ul{G}:=T_\frl\ul{G}\ot_{A_\frl}Q_\frl.
\]
To see the structures of these modules, we consider the \textit{$\fra$-torsion submodule} of $\ul{G}$
  \[
\ul{G}[\fra]:=
\bigcap_{0\neq a\in \fra}\ul{G}[a],
\] 
where $\ul{G}[a]:=\Ker(\phi_a\colon G\to G)$.
Remark that  $\ul{G}[\fra]=\ul{G}[a]$ if $\fra=(a)$. 
It follows by  \cite[Theorem 6.4]{Har19} that 
$\ul G[\fra]$ is a finite (locally) free closed subgroup scheme of $G$ and has a natural $A/\fra$-module structure via $\phi$, and it is \'etale because $\gamma$ is injective.
By \cite[Theorem 6.6]{Har19}, we have an isomorphism of $A/\fra$-modules
\[
\ul{G}[\fra](L^\sep)\overset{\sim}{\lra} (A/\fra)^{\oplus \rk \ul{G}}
\]
and   $\ul G[\fra](L^\sep)$  has a continuous $\cG_L$-action compatible with its $A$-module structure. 
Then there is a $\cG_L$-equivariant isomorphism
\[
\varprojlim_n \ul{G}[\frl^n](L^\sep) \overset{\sim}{\lra} T_\frl\ul{G}
\]
and so $T_\frl\ul G$ is a free $A_\frl$-module of rank $\rk\ul{G}$ with a continuous $\cG_L$-action. 

Let $\tau$ be the relative $q$-Frobenius endomorphism of $\bG_{a,L}=\Spec L[X]$ given by $X\mapsto X^q$.
For the $A_L$-module $M(\ul G)$,  
%an abelian Anderson $A$-module $\ul G$ of rank $r$ and dimension $d$ over $L$,
 define 
 \[
 \tau_{M(\ul G)}\col\sig\ast M(\ul G)\ra M(\ul G)
 \] by $\tau_M(\ul G)(\sig\ast m)=\tau\circ m$ for $m\in M(\ul G)$.
Then  $\ul{M}(\ul G)=(M(\ul G),\tau_{M(\ul G)})$ is an effective $A$-motive over $L$ whose rank and dimension are equal to those of $\ul G$, and the functor 
\[
\ul G \mapsto \ul{M}(\ul G)
\]
 is fully faithful; see \cite[Theorem 1]{And86} and \cite[Theorem 3.5]{Har19}.
 In addition, there is a $\cG_L$-equivariant isomorphism
 \[
 \ul G[\frl^n](L^\sep) 
 \overset{\sim}{\lra} 
 \Hom_{A/\frl^n}((\ul{M}(\ul G)/\frl^n \ul{M}(\ul G))(L^\sep)^\tau, \ \Hom_{\bF_q}({A/\frl^n}, \bF_q)) 
 \]
 for each $n\geq 1$ and hence we have 
 \[
 T_\frl\ul G \cong \Hom_{A_\frl}(\wc T_\frl \ul{M}(\ul G), A_\frl) \cong\wc T_\frl \ul{M}(\ul G)^\vee.
 \]
 Therefore if we put $\ul G[\frl^\infty](L^\sep):=\bigcup_{n=1}^\infty\ul{G}[\frl^n](L^\sep)$, there is a $\cG_L$-equivariant isomorphism
 \[
 \wc V_\frl\ul{M}(\ul G)/\wc T_\frl\ul{M}(\ul G) \cong V_\frl\ul{G}/T_\frl\ul{G}\cong \ul{G}[\frl^\infty](L^\sep).
 \]
 By this, the finiteness of $\ul{G}(L_\infty)_\tor$
 and  $\prod_{\frl}(\wc V_\frl\ul{M}(\ul G)/\wc T_\frl\ul{M}(\ul G) )^{\cG_{L_\infty}}$ are equivalent. 
%%%%%%%%%%%%%%%%%%%%%%%%%%%%%%%%%%%%%%%%%%%%%%
\subsection{Good reduction and weights of $A$-motives}

Let $\ul M$ be an $A$-motive over $L$.
\begin{definition}
An $A$-motive $\ul \cM$ over $R$ satisfying $\ul{\cM}_L\cong \ul{M}$ is called a \textit{good model} of $\ul M$.
If $\ul M$ has a good model, then it is said to have \textit{good reduction}.
Then we define  the \textit{reduction} of $\ul{M}$ by
\[
\ul{M}_k:=\ul \cM_k=(M_k:=\cM\ot_{A_R}A_k,\ \tau_k:=\tau_{\cM}\ot \id),
\]
which is an $A$-motive over $k$.

An abelian Anderson $A$-module $\ul G$ over $L$ is said to have \textit{good reduction} if the associated $A$-motive $\ul M(\ul G)$ has good reduction.
\end{definition}

Suppose that  $\ul{M}$ has a good model  
$\ul{\cM}$.
For the reduction $\ul{M}_k=(M_k, \tau_k)$, 
since $\sig^{[k:\bF_q]}$ is identity on $k$, there is a canonical isomorphism $\sigma^{[k:\bF_q]*}M_k \cong  M_k$.
Then we have an automorphism
\[
\tau_k^{[k:\bF_q]}\colon M_k[J_k\inv] \overset{\sim}{\lra} M_k[J_k\inv],
\]
where $\tau_k^{[k:\bF_q]}:=\tau_k\circ \sig\ast\tau_k\circ\cdots \circ \sig^{([k:\bF_q]-1)*}\tau_k$.
Let $Q_k=\mathrm{Frac}(A_k)$ be the field of fractions of $A_k$.
Then the isomorphism $\sig\colon A_k \overset{\sim}{\ra} A_k$
induces $\sigma\colon Q_k \overset{\sim}{\ra} Q_k$.
By construction, $Q_k$ is a Galois extension of $Q_{\bF_q}=Q$, and its Galois group is generated by $\sig$.
We may view $\tau_k^{[k:\bF_q]}$ as an automorphism of the $Q_k$-vector space $M_k\otimes_{A_k}Q_k$.
We define the \textit{characteristic polynomial of $\ul{M}$} by  
\[
P(X; \ul{M}):=\det(X-\tau_k^{[k:\bF_q]}),
\]
which has coefficients in $Q$ by \cite[Lemma 8.1.4]{BP09}.
We fix an algebraic closure $Q^\alg$ of $Q$ and denote by  
\[
|\cdot|\colon Q^\alg \ra \bR_{\geq 0}
\]
 the unique extension of the normalized absolute value of $Q$ with respect to the distinguished place $\infty$.

\begin{definition}\label{definition.wmot}
An $A$-motive $\ul{M}$ over $L$ with good reduction is said to
have \textit{weights $w_1,\ldots, w_r$} if 
$P(X; \ul{M})=\prod_{i=1}^r(X-\alpha_i)$ for some elements $\alpha_i\in Q^\alg$ such that 
\[
|\alpha_i|=(\#k)^{w_i}.
\]
We say that $\ul{M}$ has \textit{no integral weights} if $w_i\notin \bZ$ for any $i$.
\end{definition}

Let $
\rho_{\ul{M}, \frl}\colon \cG_L\ra \Aut_{A_\frl}\wc T_\frl\ul{M}
$ be 
 the associated Galois representation and $\Frob_k\in \cG_k=\Gal(k^\sep/k)$ the geometric Frobenius
 element (i.e., its inverse $\Frob_k^{-1}$ acts on $k^\sep$ by $\lambda\mapsto \lambda^{\#k}$).
If $\frl\neq \frp$, then the N\'eron-Ogg-Shafarevich-type criterion (cf.\ \cite[Theorem 1.1]{Gar02} and \cite[Proposition 4.49]{Gaz24}) implies that $\wc T_\frl\ul{M}$ is unramified, that is, the action of the inertia subgroup $\cI_L\subset \cG_L$ is trivial.
Hence $\rho_{\ul{M},\frl}$ factors through $\cG_k \cong \cG_L/\cI_L$, so $\rho_{\ul{M},\frl}(\Frob_k)$ is well-defined.

\begin{lemma}[{cf. \cite[Proposition 2.3.36]{HJ20}}]\label{lemma.Frob}
Under the above notation, we have 
\[
P(X; \ul{M})=\det(X-\rho_{\ul{M},\frl}(\Frob_k))
\]
for each $\frl\neq \frp$.
\end{lemma}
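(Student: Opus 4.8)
The plan is to compare two $Q$-valued polynomials attached to $\ul M$ — namely $P(X;\ul M) = \det(X - \tau_k^{[k:\bF_q]})$ acting on $M_k \otimes_{A_k} Q_k$, and $\det(X - \rho_{\ul M,\frl}(\Frob_k))$ acting on $\wc V_\frl \ul M$ — by reducing everything to the special fiber and using that the $\frl$-adic realization commutes with reduction for $\frl \neq \frp$. First I would fix a good model $\ul\cM$ of $\ul M$ over $R$; since $\frl \neq \frp$, the $\frl$-adic realization $\wc T_\frl \ul M$ is unramified (by the N\'eron–Ogg–Shafarevich criterion cited before Lemma~\ref{lemma.Frob}), so $\rho_{\ul M,\frl}$ factors through $\cG_k$, and there is a canonical $\cG_k$-equivariant identification $\wc T_\frl \ul M \cong \wc T_\frl \ul M_k$ of the $\frl$-adic realization with that of the reduction $\ul M_k = \ul\cM_k$. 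This reduces the claim to the corresponding statement over the finite field $k$: for an $A$-motive $\ul M_k$ over $k$ and a prime $\frl$ invertible in $k$ (equivalently $\frl \neq \frp$), the action of geometric Frobenius $\Frob_k$ on $\wc T_\frl \ul M_k$ has characteristic polynomial $\det(X - \tau_k^{[k:\bF_q]})$.

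Next I would prove this finite-field statement by working modulo each power $\frl^n$ and passing to the limit. Recall $\wc T_\frl \ul M_k = \varprojlim_n (\ul M_k/\frl^n)(k^\sep)^\tau$, the $\tau$-fixed points of $M_k/\frl^n M_k \otimes_{A_k} A_{k^\sep}$. The Frobenius $\Frob_k \in \cG_k$ acts on the coefficient ring $A_{k^\sep} = A/\frl^n \otimes_{\bF_q} k^\sep$ through its action on $k^\sep$, which is the \emph{inverse} of $\lambda \mapsto \lambda^{\#k}$. The key algebraic point is the standard Lang-style identity: on the $\tau$-fixed lattice, the geometric Frobenius operator coincides with (the inverse of) the linearized $q$-power operator $\sig^{[k:\bF_q]}$, and the defining relation $\tau_k(\sig\ast m) = m$ for fixed points lets one rewrite $\sig^{[k:\bF_q]}$ in terms of $\tau_k^{[k:\bF_q]}$ acting on $M_k \otimes_{A_k} Q_k$. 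Concretely, one shows that $\Frob_k$ acting on $\wc V_\frl \ul M_k$ is conjugate, as an operator, to $\tau_k^{[k:\bF_q]}$ acting on $M_k \otimes_{A_k} Q_k$ — after base-changing both sides to $\ol{Q_\frl}$ or to a common algebraically closed field — which forces equality of characteristic polynomials since both polynomials already have coefficients in $Q$.

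The main obstacle I expect is bookkeeping the two Frobenius twists correctly: there is the arithmetic/geometric Frobenius distinction in $\cG_k$, and the semilinearity of $\tau_k$ over $\sig$, and one must check that the power $[k:\bF_q]$ appearing in $\tau_k^{[k:\bF_q]}$ matches exactly the number of times one iterates to make $\sig$ act trivially on $k$ and hence to linearize. A clean way to handle this is to invoke the dictionary between $A$-motives over $k$ with good reduction and the associated finite $\bF_q$-shtukas / local shtuka data (as in \cite[\S 4, \S 6]{Har19}), under which the statement becomes the well-known compatibility between Frobenius on the Tate module of an \'etale shtuka and the linearization of its $\tau$; this is essentially \cite[Proposition 2.3.36]{HJ20}, which I would cite and then only need to transport across the good-reduction comparison $\wc T_\frl\ul M \cong \wc T_\frl\ul M_k$. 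Thus the proof is: (i) reduce to the special fiber using unramifiedness for $\frl\neq\frp$; (ii) apply the known Frobenius-compatibility over $k$; (iii) note both sides lie in $Q[X]$ so the identity over $Q_\frl$ suffices.
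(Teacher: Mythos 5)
Your proposal is correct and follows essentially the same route as the paper: reduce to the special fibre via the good-reduction/unramifiedness comparison $\wc T_\frl\ul M\cong\wc T_\frl\ul M_k$, then use the canonical comparison isomorphism $\wc T_\frl\ul M_k\otimes_{A_\frl}A_{\frl,k^\sep}\cong M_{k,\frl}(k^\sep)$ to exhibit $\rho_{\ul M,\frl}(\Frob_k)$ as conjugate to $\tau_k^{[k:\bF_q]}$, which is exactly the commutative diagram the paper writes down. The Frobenius bookkeeping you flag is precisely what that diagram resolves, and both sides lying in $Q[X]$ closes the argument as you say.
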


\begin{proof}
There is an isomorphism $\wc T_\frl\ul{M} \overset{\sim}{\lra} \wc T_\frl\ul{M}_k$ as $A_\frl$-modules with $\cG_k$-action. %induced by 
%$M\otimes_{A_L}A_{\frl,L} \cong  \cM\otimes_{A_R}A_{\frl,L} \thra M_k\otimes_{A_k}A_{\frl, L}$. 
For $M_{k,\frl}=M_k\otimes_{A_k}{A_{\frl, k}}$ and $M_{k,\frl}(k^\sep)=M_{k,\frl}\ot_{A_{\frl,k}}A_{\frl, k^\sep}$, we have a canonical isomorphism
\[
h\colon \wc T_\frl\ul{M}_k \otimes_{A_\frl}A_{\frl,k^\sep} \overset{\sim}{\lra} M_{k,\frl}(k^\sep)
\]
induced by the inclusion $\wc T_\frl\ul{M}_k= M_{k,\frl}(k^\sep)^\tau \hra  M_{k,\frl}(k^\sep)$.
Then we have the following commutative diagram.
\[
\xymatrix@C=50pt@R=30pt{
 M_{k,\frl}(k^\sep) \ar@{=}[r] 
& \sig^{[k:\bF_q]*} M_{k,\frl}(k^\sep) \ar[r]^-{\tau_k^{[k:\bF_q]}}_-{ \cong }
&  M_{k,\frl}(k^\sep)
\\
\wc T_\frl\ul{M}_k \otimes_{A_\frl}A_{\frl,k^\sep} \ar[u]_-{ \cong }^-{h}
&\wc T_\frl\ul{M}_k \otimes_{A_\frl}A_{\frl,k^\sep} \ar[l]_-{ \cong }^-{\SSC \rho_{\ul{M},\frl}(\Frob_k\inv)\otimes\id}  \ar[u]_-{ \cong }^-{(\Frob_k\inv)^*h} \ar@{=}[r] 
&\wc T_\frl\ul{M}_k \otimes_{A_\frl}A_{\frl,k^\sep}  \ar[u]_-{ \cong }^h
}
\]
This implies that $\rho_{\ul{M},\frl}(\Frob_k)=h^{-1}\circ \tau_k^{[k:\bF_q]}\circ h$ on $\wc T_\frl\ul{M}$ and so we get the conclusion.
\end{proof}

\begin{example}
Suppose that  a Drinfeld $A$-module $\ul G$ over $L$ has good reduction, that is equivalent to the existence of an element $\lambda\in L$ such that 
$\lambda^{-1}\phi_a\lambda \in R\{\tau\}$ and the leading coefficient of $\lambda^{-1}\phi_a\lambda$ belongs to $R^\times$ for each $a\in A$. 
By \cite[Theorem 4.12.8]{Gos96}, if $\ul G$ is of rank $r$, then each eigenvalue $\alpha$ of $\Frob_{k}^{-1}$ on $T_\frl\ul G$ with $\frl\neq \frp$ satisfies 
\[
|\alpha|=(\# k)^{\frac{1}{r}}.
\]
Since $\wc T_\frl \ul M(\ul G)\cong \Hom_{A_\frl}(T_\frl \ul G, A_\frl)$, 
the $A$-motive $\ul M(\ul G)$ has weights $\frac{1}{r}, \ldots, \frac{1}{r}$ with multiplicity $r$.
\end{example}
%We also write $\bar{\rho}_{\ul{M},\frl}\colon \cG_k \ra \Aut_{\bF_\frl}\wc T_\frl\ul{M}/\frl\wc T_\frl\ul{M}$.
%
%\begin{lemma}
%Suppose that all weights of $\ul{M}$ are non-zero.
%\begin{itemize}
%\item[(1)] For all primes $\frl\neq \frp$, eigenvalues of $\rho_{\ul{M},\frl}(\Frob_k)$ are distinct from $1$. 
%\item[(2)] For almost all $\frl\neq \frp$, eigenvalues of $\bar{\rho}_{\ul{M},\frl}(\Frob_k)$ are distinct from $1$.
%\end{itemize} 
%\end{lemma}
%%%%%%%%%%%%%%%%%%%%%%%%%%%%%%%%%%%%%%%%%%%%%%%%%%%%%%%%%%%%%%%%%%%%%%%%%%%%%%%%%%%%%%%%%%%%%%%%%%%%%%%%%%%%%%%%%%%%%%%%%%%%
\section{Equal characteristic crystalline representations}
We shall give a brief overview of Hodge-Pink theory and equal characteristic crystalline representations of $\cG_L$ needed for our study (cf.\ \cite{GL11, Har11, HK20}).

\subsection{Local shtukas}
Recall that we have put $\wh q=\#\bF_\frp$.
We consider the ring $R\wh\ot_{\bF_\frp}A_\frp \cong  R\zb$ and denote by $\wh\sig$ the ring endomorphism 
of $R\zb$ with $\wh\sig(z)=z$ and $\wh\sig(\lambda)=\lambda^{\wh q}$ for any $\lambda\in R$.
Then $\wh \sig$ is extended to ring endomorphisms of $L\zb$ and $L^\sep\zb$, which are also denoted by $\wh \sig$.
The absolute Galois group $\cG_L$ of $L$ acts on $L^\sep\zb$ by 
\[
g\cdot \sum_{n=0}^\infty \lambda_nz^n:=\sum_{n=0}^\infty g(\lambda_n)z^n
\]
for any $g\in \cG_L$.
For each $R\zb$-module $\wh M$, we set $\wh\sig\ast \wh M:=\wh M\otimes_{R\zb, \wh\sig}R\zb$.

\begin{definition}
A \textit{local shtuka of rank $r$} over $R$ is a pair $\ul{\wh M}=(\wh M, \tau_{\wh M})$ consisting of a free $R\zb$-module $\wh M$ of rank $r$ and an isomorphism $\tau_{\wh M}\colon \wh\sig\ast \wh M[\zzf]\overset{\sim}{\lra} \wh M[\zzf]$.
We write $\rk \ul{\wh M}$ for the rank of $\ul{\wh M}$.

A \textit{morphism} of local shtukas $f\colon \ul{\wh M}\ra \ul{\wh N}$ over $R$ is a morphism of $R\zb$-modules $f\colon \wh M\ra \wh N$ such that 
$\tau_{\wh N}\circ \wh\sig\ast f=f\circ \tau_{\wh M}$. 
%It is called an \textit{isogeny} if it induces an $R\zb[\zf]$-module isomorphism $f\colon \wh M[\zf]\overset{\sim}{\lra}\wh N[\zf]$.
%isogenyに関する言及は必要ないかも？
We write $\Hom_R(\ul{\wh M}, \ul{\wh N})$ for the $A_\frp$-module of morphisms $f\colon \ul{\wh M}\ra \ul{\wh N}$, and set $\End_R(\ul{\wh M}):=\Hom_R(\ul{\wh M}, \ul{\wh M})$.

A \textit{quasi-morphism} of local shtukas $f\colon \ul{\wh M}\ra \ul{\wh N}$ over $R$ is a morphism of $R\zb[\zf]$-modules $f\colon \wh M[\zf]\ra \wh N[\zf]$ such that 
$\tau_{\wh N}\circ \wh\sig\ast f=f\circ \tau_{\wh M}$. 
It is called a \textit{quasi-isogeny} if it is an isomorphism of $R\zb[\zf]$-modules.
Let us denote by $\QEnd_R(\ul{\wh M}, \ul{\wh N})$ the $Q_\frp$-vector space of quasi-morphisms and set $\QEnd_R(\ul{\wh M}):=\QHom_R(\ul{\wh M}, \ul{\wh M})$.

Similar as $A$-motives, one can consider the \textit{direct sums} and \textit{tensor products} of local shtukas.
\end{definition}

By \cite[Corollary 3.4.5]{HK20}, the $A_\frp$-module $\Hom_R(\ul{\wh M}, \ul{\wh N})$ is free of rank at most $\rk \ul{\wh M}\cdot \rk\ul{\wh N}$.
We also have $\QHom_{R}(\ul{\wh M}, \ul{\wh N})=\Hom_R(\ul{\wh M}, \ul{\wh N})\otimes_{A_\frp}Q_\frp$.
If there is a quasi-isogeny $f\colon \ul{\wh M}\ra \ul{\wh N}$, then $g\mapsto f\circ g \circ f^{-1}$ defines an isomorphism of $Q_\frp$-algebras 
$\QEnd_R(\ul{\wh M})\overset{\sim}{\lra} \QEnd_R(\ul{\wh N})$.
We denote by 
\[
\mathsf{QLS}(R)
\]
the category of local shtukas over $R$ with quasi-morphisms, so that 
isomorphisms in this category are quasi-isogenies.

Let $\wh{\ul M}=(\wh M,\tau_{\wh M})$ be a local shtuka over $R$.
Then $\tau_{\wh M}$ induces an isomorphism 
\[
\tau_{\wh M} \col \hsig\ast \wh M \ot_{R\zb}L^\sep\zb \lrai \wh M\ot_{R\zb}L^\sep\zb
\]
because $\zz$ is invertible in $L^\sep\zb$.
Now $\cG_L$ acts on 
$\wh M \ot_{R\zb}L^\sep\zb$ via 
\[
g\cdot (m\ot \mu(z))=m\ot g\cdot \mu(z)
\]
 for $m\in M$ and $\mu(z)\in L^\sep\zb$.
We denote by 
\[
\wh\sig\ast_{\wh M}\colon \wh M\ot_{R\zb}L^\sep\zb \to \wh\sig\ast\wh M\ot_{R\zb}L^\sep\zb
\]
the natural map given by  $\wh\sig\ast_{\wh M}m=m\ot 1$ for each $m\in \wh M\ot_{R\zb}L^\sep\zb$. 
Define 
the {\it $\frp$-adic realization} of $\ul{\wh M}$ to be 
\[  
\wc T_\frp \wh{\ul M}:=(\wh M \ot_{R\zb}L^\sep\zb)^{\wh\tau}:=
\{ m \in \wh M \ot_{R\zb}L^\sep\zb  :  \tau_{\wh M}(\hsig\ast_{\wh M} m)=m\},
\] 
which 
 is a free $A_\frp$-module of rank equal to  $\rk \wh{\ul M}$ with a continuous $\cG_L$-action.
We also define the {\it rational $\frp$-adic realization} of  $\wh{\ul M}$ by 
\[
\wc V_\frp \wh{\ul M}:= \wc T_\frp \wh{\ul M} \ot_{A_\frp}Q_\frp.
\]
%By the Tate conjecture for local shtukas proved by \red{ここに引用}, we have the following $A_\frp$-isomorphism 
%\begin{equation}\label{equation.Tate}
%\Hom_{R}(\ul{\wh M}, \ul{\wh N}) \overset{\sim}{\lra} \Hom_{A_\frp[\cG_L]}(\wc T_\frp\ul{\wh M}, \wc T_\frp\ul{\wh N})
%\end{equation}
%for local shtukas $\ul{\wh M}$ and $\ul{\wh N}$ over $R$.

By the similar argument in the proof of \cite[Proposition 3.4.22]{HK20}, we get the following.

\begin{lemma}\label{lemma.sub}
For any $\cG_L$-stable $Q_\frp$-subspace $V \subset \wc V_\frp\ul {\wh M}$, there is a local shtuka $\ul{\wh M'}$ over $R$ such that $V\cong\wc V_\frp\ul{\wh M'}$.
\end{lemma}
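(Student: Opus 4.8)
The plan is to adapt the argument of \cite[Proposition 3.4.22]{HK20}, the point being that although $\wc V_\frp$ forgets the Hodge--Pink datum, any $\cG_L$-stable subspace of $\wc V_\frp\ul{\wh M}$ can be ``re-integralized'' to a sub-local-shtuka of $\ul{\wh M}$ in $\mathsf{QLS}(R)$. First I would pass to the integral level: set $T:=V\cap\wc T_\frp\ul{\wh M}$. Since $\wc T_\frp\ul{\wh M}/T$ embeds into the $Q_\frp$-vector space $\wc V_\frp\ul{\wh M}/V$ it is torsion-free, hence free over the complete discrete valuation ring $A_\frp$; thus $T$ is a $\cG_L$-stable direct summand of $\wc T_\frp\ul{\wh M}$, free of rank $\dim_{Q_\frp}V$, with $T\otimes_{A_\frp}Q_\frp=V$. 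It then suffices to construct a local shtuka $\ul{\wh M'}$ over $R$ together with a $\cG_L$-equivariant isomorphism $\wc T_\frp\ul{\wh M'}\cong T$, since tensoring with $Q_\frp$ gives $\wc V_\frp\ul{\wh M'}\cong V$.

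Next I would transport $T$ along the comparison isomorphism. Over $L^\sep\zb$ the element $z-\zeta$ is a unit, so $(\wh M\otimes_{R\zb}L^\sep\zb,\tau_{\wh M})$ is \'etale, and there is a canonical $\cG_L$-equivariant isomorphism
\[
\wc T_\frp\ul{\wh M}\otimes_{A_\frp}L^\sep\zb\ \lrai\ \wh M\otimes_{R\zb}L^\sep\zb
\]
intertwining $\id\otimes\hsig$ with the Frobenius of $\ul{\wh M}$ (the local-shtuka analogue of the trivialization $h$ appearing in the proof of Lemma~\ref{lemma.Frob}; cf.\ \cite[\S 3.4]{HK20}). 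Under it $\wh N:=T\otimes_{A_\frp}L^\sep\zb$ becomes a $\cG_L$-stable $L^\sep\zb$-submodule of $\wh M\otimes_{R\zb}L^\sep\zb$ which $\tau_{\wh M}$ carries isomorphically onto itself: indeed $\tau_{\wh M}$ is the identity on $T\subseteq\wc T_\frp\ul{\wh M}$, so $\tau_{\wh M}$ restricts to an isomorphism $\hsig\ast\wh N\lrai\wh N$. By Galois descent along $L^\sep\zb/L\zb$ --- using $(L^\sep\zb)^{\cG_L}=L\zb$ and the vanishing of $H^1(\cG_L,\GL_n(L^\sep\zb))$ for all $n$ (Hilbert~90 for each $L^\sep[z]/z^n$, passed to the limit) --- the pair $(\wh N,\tau_{\wh M})$ descends to an \'etale $\hsig$-module $(\wh N_L,\tau)$ over $L\zb$ with $\wh N_L\otimes_{L\zb}L^\sep\zb=\wh N$ compatibly with $\cG_L$ and $\tau$.

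Finally I would spread $\wh N_L$ out over $R\zb$. Intersecting $\wh N_L$ with the $R\zb$-lattice $\wh M$ --- and enlarging the result suitably so that $\tau_{\wh M}$ restricts correctly, following \cite[Proposition 3.4.22]{HK20} --- produces a free $R\zb$-module $\wh M'$ of full rank inside $\wh N_L$, with $\wh M'\otimes_{R\zb}L\zb=\wh N_L$ and hence $\wh M'\otimes_{R\zb}L^\sep\zb=\wh N$, on which $\tau_{\wh M}$ induces an isomorphism $\hsig\ast\wh M'[\zzf]\lrai\wh M'[\zzf]$; then $\ul{\wh M'}:=(\wh M',\tau_{\wh M})$ is a local shtuka over $R$ and $\wc T_\frp\ul{\wh M'}=(\wh N)^{\htau}=T$, completing the proof. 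I expect this last step to be the main obstacle: showing that the candidate $\wh M'$ really is a local shtuka over $R$ --- i.e.\ establishing its $R\zb$-integrality and the correct elementary-divisor behaviour of $\tau_{\wh M}$ relative to $z-\zeta$ --- is the equal-characteristic counterpart of descending a Galois-stable lattice in a crystalline representation to a Breuil--Kisin submodule, and it is precisely here that one uses that $\ul{\wh M}$ is a local shtuka over the valuation ring $R$. The first two steps are the soft, formal part.
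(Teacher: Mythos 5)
Your proposal follows essentially the same route as the paper: set $T:=V\cap\wc T_\frp\ul{\wh M}$, transport it through the comparison isomorphism $\wc T_\frp\ul{\wh M}\ot_{A_\frp}L^\sep\zb\lrai\wh M\ot_{R\zb}L^\sep\zb$ of \cite[Proposition 3.4.2]{HK20}, descend to an $L\zb$-module $N$, and then intersect with $\wh M$ to get the candidate $\ul{\wh M'}$; the paper itself describes its proof as "the similar argument" to \cite[Proposition 3.4.22]{HK20}. Your first two steps are correct (note only that the paper gets the descent for free, since the cited comparison isomorphism already identifies $\wh M\ot_{R\zb}L\zb$ with $(\wc T_\frp\ul{\wh M}\ot_{A_\frp}L^\sep\zb)^{\cG_L}$, so no separate Hilbert~90 argument is needed).

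The one place where you stop short is the final step you yourself flag as the main obstacle, so let me record how the paper closes it. No "enlargement" of the intersection is needed: one takes $\wh M':=N\cap\wh M$ on the nose. The elementary divisor theorem over the principal ideal domain $L\zb$ shows $N[\zf]$ is a direct summand of $\wh M[\zf]\ot_{R\zb}L\zb$, and then $\wh M'$ is a finitely generated torsion-free $R\zb$-module with $\wh M'=\wh M'[\zf]\cap(\wh M'\ot_{R\zb}L\zb)$ and $\tau_{\wh M}(\wh\sig\ast\wh M'[\zzf])=\wh M'[\zzf]$; the criterion \cite[Lemma 3.4.23]{HK20} then says exactly that such a module underlies a local shtuka, which is the integrality statement you were worried about. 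The remaining point, that $\wc T_\frp\ul{\wh M'}=T$ (equivalently $\wh M'\ot_{R\zb}L\zb=N$, without which your identity $\wc T_\frp\ul{\wh M'}=\wh N^{\htau}=T$ is unjustified), is proved by a short exact-sequence computation: from $0\to\wh M'\to\wh M'[\zf]\to N[\zf]/N$ and the isomorphism $(N[\zf]/N)\ot_{R\zb}L\zb\cong N[\zf]/N$ one deduces $0\to\wh M'\ot_{R\zb}L\zb\to N[\zf]\to N[\zf]/N$, whence $\wh M'\ot_{R\zb}L\zb=N$. With these three ingredients supplied, your outline becomes the paper's proof.
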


\begin{proof}
By {\cite[Proposition 3.4.2]{HK20}},
the inclusion $\wc T_\frp\ul{\wh M} \hra \wh M\ot_{R\zb}L^\sep\zb$ defines a canonical isomorphism of $L^\sep\zb$-modules
\begin{equation*}
h\colon \wc T_\frp\ul{\wh M}\ot_{A_\frp}L^\sep\zb \overset{\sim}{\lra} \wh M\ot_{R\zb}L^\sep\zb
\end{equation*}
which is functorial in $\ul {\wh M}$ and $\cG_L$- and $\wh \tau$-equivariant, where on the left module 
$\cG_L$ acts on both factors and $\wh \tau=\id\ot\wh\sig$, and on the right module $\cG_L$ acts only on $L^\sep\zb$ and $\wh \tau=(\tau_{\wh M}\circ \wh \sig_{\wh M}^*)\ot\wh\sig$.
In particular, one can recover $\wh M\ot_{R\zb}L\zb=(\wc T_\frp\ul{\wh M}\ot_{A_\frp}L^\sep\zb)^{\cG_L}$.
Under the natural isomorphism
\begin{equation}\label{eq.iden}
\wh\sig\ast L^\sep\zb=L^\sep\zb\ot_{L^\sep\zb, \wh \sig}L^\sep\zb \overset{\sim}{\lra} L^\sep\zb;\  x\ot  y\mapsto \wh\sig(x)y,
\end{equation}
 we see that $h$ induces the following commutative diagram.
\begin{equation}\label{diag.h}
\vcenter{
\xymatrix@C=40pt@R=30pt{
\wc T_\frp\ul{\wh M}\ot_{A_\frp}L^\sep\zb\ar@{=}[d] & \ar[l]^{\cong}_{\id\ot(\ref{eq.iden})} 
 \wc T_\frp\ul{\wh M}\ot_{A_\frp}\wh\sig\ast L^\sep\zb \ar[r]_{\cong}^{\wh\sig\ast h}&
 \wh\sig\ast\wh M\ot_{R\zb}L^\sep\zb \ar[d]_{\cong}^{\tau_{\wh M}}
\\
\wc T_\frp\ul{\wh M}\ot_{A_\frp}L^\sep\zb \ar[rr]_{\cong}^{h} & & \wh M\ot_{R\zb}L^\sep\zb
}
}
\end{equation}

Put $T:=V\cap \wc T_\frp\ul{\wh M}$, which is a $\cG_L$-stable $A_\frp$-lattice of $V$.
We define an $L\zb$-submodule $N$ of $\wh M\ot_{R\zb}L\zb$ by 
\[
N=h\left((T\ot_{A_\frp}L^\sep\zb)^{\cG_L}\right).
\]
It is free over $L\zb$, so the elementary divisor theorem implies that 
$N[\zf]$ is a direct summand of $\wh M[\zf]\ot_{R\zb}L\zb$.
We set $\wh M':=N\cap \wh M$.
Then  $\wh M'[\zf]\ot_{R\zb}L\zb=N[\zf]$.
By construction, $\wh M'$ is a finitely generated torsion free $R\zb$-module satisfying
$\wh M'=\wh M'[\zf]\cap (\wh M'\ot_{R\zb}L\zb)$ and $\tau_{\wh M}(\wh\sig\ast M'[\zzf])=M'[\zzf]$ by the diagram (\ref{diag.h}).
It follows by \cite[Lemma 3.4.23]{HK20} that 
$\ul{\wh M'}=(\wh M', \tau_{\wh M'})$ is a local shtuka over $R$, where $\tau_{\wh M'}$ is the restriction of $\tau_{\wh M}$ to $\wh\sig\ast M'[\zzf]$.

We claim that $\wc T_\frp\ul{\wh M'}=T$.
To prove this, it suffices to show that $\wh M'\ot_{R\zb}L\zb=N$.
% by Proposition \ref{prop.h}.
By construction,  we have $\wh M'=\wh M'[\zf]\cap N$ and $N[\zf]=\wh M'[\zf]\ot_{R\zb}L\zb$, so we get an exact sequence
\[
0\to \wh M'\to \wh M'[{\textstyle\frac{1}{z}}] \to N[{\textstyle\zf}]/N.
\]
Since we have an isomorphism 
\begin{align*}
(N[{\textstyle \zf}]/N)\ot_{R\zb}L\zb
&\cong \varinjlim_r \left(\overset{}{z^{-r}N/N \ot_{R\zb/(z^r)}L\zb/(z^r)}\right) \\
&\cong \varinjlim_r \left(z^{-r}N/N\right)[{\textstyle \frac{1}{\zeta}}]\\
&\cong N[{\textstyle \frac{1}{z}}]/N, 
\end{align*}
we get the following exact sequence 
\[
0\to \wh M'\ot_{R\zb}L\zb \to N[{\textstyle \zf}] \to N[{\textstyle \zf}]/N.
\]
This implies the claim.
Consequently, we see that $\wc V_\frp\ul{\wh M'}=T\ot_{A_\frp}Q_\frp \cong V$.
\end{proof}

Let $\mathsf{Rep}_{Q_\frp}(\cG_L)$ be the category of finite-dimensional $Q_\frp$-representations of $\cG_L$.
It follows by \cite[Theorem 3.4.20]{HK20} that $\ul{\wh M}\mapsto \wc V_\frp\ul{\wh M}$ defines an exact tensor fully faithful functor
\[
\wc V_\frp \colon \mathsf{QLS}(R) \lra \mathsf{Rep}_{Q_\frp}(\cG_L).
\]
We write 
\[
\mathsf{Rep}_{Q_\frp}(\cG_L)^\crys
\]
for the full subcategory of $\mathsf{Rep}_{Q_\frp}(\cG_L)$ whose objects are the essential image of $\wc V_\frp$.
We refer objects of $\mathsf{Rep}_{Q_\frp}(\cG_L)^\crys$ to \textit{$($$z$-adic$)$ crystalline representations} of $\cG_L$.

\begin{example}[{cf.\ \cite[Example 3.4.11]{HK20}}]\label{example.cyclotomic}
Define the local shtuka $\ul{\mathbbm{1}}(1)$ over $R$ of rank one by 
\[
\ul{\mathbbm{1}}(1)=(R\zb\cdot\mathbf{e},\es \wh\sig\ast \mathbf{e}\mapsto(\zz)^{-1}\mathbf{e}).
\]
For each $n\in \bZ_{\geq 0}$, we take elements $\ell_n\in F_\frp^\sep$ as solutions to the equations 
$\ell_0^{\wh q-1}=-\zeta$ and $\ell_n^{\wh q}+\zeta \ell_n=\ell_{n-1}$.
%Thus we have $|\ell_i|_L=|\zeta|^{\wh q^i/(\wh q-1)}_L<1$, where $|\cdot|_L$ is an absolute value which $L$ is complete.
Then the power series $\ell^+:=\sum_{n=0}^\infty \ell_nz^n\in F_\frp^\sep\zb \subset L^\sep\zb$ satisfies $\wh\sig(\ell^+)=(\zz)\cdot \ell^+$.
Although $\ell^+$ depends on the choice of the $\ell_n$, if we choose different $\wt\ell_n$ and define a power series $\wt\ell^+$, we have 
$\wt\ell^+=u\ell^+$ for some unit $u\in A_\frp^\times$.
Indeed, $u=\wt\ell^+/\ell^+$ satisfies 
\[
\wh\sig(u)=\frac{\wh\sig(\wt \ell^+)}{\wh\sig(\ell^+)}=
\frac{(\zz)\cdot\wt \ell^+}{(\zz)\cdot\ell^+}
=
\frac{\wt \ell^+}{\ell^+}=u
\]
and so $u\in (L^\sep\zb^\times)^{\wh\sig=\id}=\bF_\frp\zb^\times= A_\frp^\times$.
Then we have 
%$\tau_{\wh M}(\wh\sig\ast (\ell^+))=(\zz)\inv\cdot \wh\sig(\ell^+)=\ell^+$ and hence 
$\wc T_\frp \ul{\mathbbm{1}}(1)=A_\frp\cdot \ell^+$.
Therefore the $\cG_L$-action on $\wc T_\frp \ul{\mathbbm{1}}(1)$ is given by the $z$-adic cyclotomic character $\chi_z$, 
so that $\wc V_\frp\ul{\mathbbm{1}}(1)=Q_\frp(1)$. 
%like as $g\cdot \ell^+=\sum_{i=0}^\infty g(\ell_i)z^n=\chi_z(g)\cdot \ell^+$ for $g\in \cG_L$.
\end{example}

\begin{example}[{cf.\ \cite[Example 3.2.2]{HK20}}]\label{ex.motiveshtuka}
Let $\ul M$ be an $A$-motive over $L$ 
with good reduction and $\ul \cM=(\cM, \tau_\cM)$ a good model  of $\ul M$. 
Now we consider the $\frp$-adic completion $A_{\frp,R}:=A_\frp\wh\ot_{\bF_q}R$ of $A_R$ and set 
$\ul \cM\ot{A_{\frp,R}}:=(\cM\ot_{A_R}A_{\frp,R},\tau_\cM\ot\id$).
Then we get the  {\it local shtuka associated with $\ul M$} denoted by $\wh {\ul M}_\frp(\ul M)$ as follows.

\begin{itemize}
\setlength{\leftskip}{-10pt}
\item Assume $\deg \frp=[\bF_\frp:\bF_q]=1$, so that $\bF_\frp=\bF_q$, $\wh q=q$, and $\wh \sig=\sig$.
 Then $A_{\frp,R}=R\zb$ and $J_R\cdot A_{\frp,R}=(\zz)$.
Hence $\ul\cM\ot A_{\frp,R}$ itself becomes a local shtuka over $R$.
 We set 
$\wh {\ul M}_\frp(\ul M):=\ul\cM\ot A_{\frp, R}$. 

\item On the other hand, let us assume   $\deg \frp>1$.
For each $0 \leq i \leq \deg\frp-1$, writing $\fra_i\subset A_{\frp, R}$ for the ideal generated by $\{b\ot1-1\ot \gamma(b)^{q^i} : b \in \bF_\frp\}$,
%Then these ideals satisfy $\prod_{i=0}^{\deg\frp-1} \fra_i=(0)$ because  for any $b\in \bF_\frp$, the polynomial $\prod_{i=0}^{\deg\frp-1}(X-b^{q^i})$ is a multiple of the minimal polynomial of $b$ over $\bF_q$ and even equal to it when $\bF_\frp=\bF_q(b)$.
%The Chinese remainder theorem yields the decomposition 
 we have an identification 
\[
A_{\frp, R}=\prod_{i=0}^{\deg\frp-1} A_{\frp, R}/\fra_i
\]
whose factors have  canonical isomorphisms $ A_{\frp, R}/\fra_i \cong R\zb$.
In addition, the factors are cyclically permuted by $\sig$ since $\sig(\fra_i)=\fra_{i+1}$, and hence $\hsig=\sig^{\deg\frp}$ stabilizes each factor.
Here it follows that the  ideal $J_R$ decomposes as $J_R\cdot A_{\frp, R}/\fra_0=(\zz)$ and $J_R\cdot A_{\frp, R}/\fra_i=(1)$ for $i\neq 0$.
Considering $\tau_\cM^{\deg\frp}:=\tau_\cM \circ \sig\ast\tau_\cM \circ \cdots \circ \sig^{(\deg\frp-1)*}\tau_\cM$, we get the local shtuka 
\[
\ul{\wh M}_\frp(\ul M):=(\cM\ot_{A_R}\left(A_{\frp,R}/\fra_0\right), \tau_{\cM}^{\deg\frp}\ot \id)
\]
over $R$.
This definition coincides with the before one when $\deg\frp=1$.
\end{itemize}
By construction, $\ul{\wh M}_\frp(\ul M)$ has the same rank of $\ul{M}$ and there is a canonical $\cG_L$-equivariant isomorphism
\[
\wc T_\frp\ul{M} \overset{\sim}{\lra} \wc T_\frp \ul{\wh M}_\frp(\ul M)
\]
of $A_\frp$-modules.
Hence $\wc V_\frp\ul{M}$ is crystalline if $\ul{M}$ has good reduction.
\end{example}

\subsection{The functors $\bH$ and $D_\crys$}
To study the category $\mathsf{Rep}_{Q_\frp}(\cG_L)^\crys$, we shall recall \textit{$z$-isocrystals with Hodge-Pink structures}.
This is the analogue of filtrated isocrystals in the theory of $p$-adic crystalline representations.

We fix a section $k\hra R$ of the reduction map $R\thra k$.
Then there exists an injective ring homomorphism $k\zp\hra L\zzb$ given by 
\[
z\mapsto \zeta+(\zz)\es\es\mathrm{and}\es\es \sum_{i}b_iz^i \mapsto \sum_{j=0}^\infty (\zz)^j\cdot \sum_{i}\binom{i}{j}b_i\zeta^{i-j}.
\]
Then $L\zzb$ and its fraction field $L\zzp$ become $k\zp$-vector spaces.

\begin{definition}
A \textit{$z$-isocrystal with Hodge-Pink structure over $R$} is a triple $\ul D=(D, \tau_D, \mathbf{q}_D^{})$ such that $(D,\tau_{D})$ is  a finite-dimensional $k\zp$-vector space together with a $k\zp$-isomorphism 
$\tau_D\colon \wh\sig\ast D\overset{\sim}{\lra} D$, and $\mathbf{q}_D^{}$ is an $L\zzb$-lattice in $D\otimes_{k\zp}L\zzp$ of full rank.
Then $\mathbf{q}_D^{}$ is called the \textit{Hodge-Pink lattice} of $\ul D$.
The dimension of $D$ is called the \textit{rank} of $\ul D$ denoted by $\rk \ul D$.
The integer $t_{\rm N}(\ul D):=\ord_z(\det \tau_D)$ is called the \textit{Newton slope of $\ul D$}.

For $\ul D=(D, \tau_D, \mathbf{q}_D^{})$ of rank $r$, write $\mathbf{p}_D^{}:=D\otimes_{k\zp}L\zzb$ for the tautological lattice in $D\otimes_{k\zp}L\zzp$.
Since $L\zzb$ is a principal ideal domain, we see by the elementary divisor theorem that 
\[
\mathbf{p}_D^{}=\bigoplus_{i=1}^rL\zzb\cdot v_i
\es \text{and} \es \mathbf{q}_D^{}=\bigoplus_{i=1}^{r}L\zzb \cdot (\zz)^{\mu_i}\cdot v_i
\]
 for a suitable basis $\{v_i\}$ of $\mathbf{p}_D^{}$ and integers $\mu_1\geq \mu_2\geq \cdots \geq \mu_r$. 
We call $\mu_1,\ldots,\mu_r$ the \textit{Hodge-Pink weights of $\ul D$} and put $t_{\rm H}(\ul D):=-(\mu_1+\cdots+\mu_r)$, which is called the \textit{Hodge slope of $\ul {D}$}. 

We say that $\ul D$ is \textit{weakly admissible} if $t_\textrm{H}(\ul D)=t_\textrm{N}(\ul D)$ and $t_\textrm{H}(\ul D')\leq t_\textrm{N}(\ul D')$ holds for any \textit{strictly subobject} $\ul D'$ of $\ul D$, that is, a $z$-isocrystal with Hodge-Pink structure $\ul D'$ satisfying that $D' \subset D$ is a $k\zp$-subspace with $\tau_D(\wh\sig\ast D')=D'$, $\tau_{D'}=\tau_{D}\mid_{\wh\sig\ast D}$, and $\mathbf{q}_{D'}^{}=\mathbf{q}_D^{}\cap D'\otimes_{k\zp}L\zzp$.

A \textit{morphism} $f\colon (D,\tau_{D}, \mathbf{q}_D^{})\ra (D', \tau_{D'}, \mathbf{q}_{D'}^{})$ of $z$-isocrystals with Hodge-Pink structures is a $k\zp$-linear homomorphism $f\colon D\ra D'$ such that $\tau_{D'}\circ \wh\sig\ast f=f\circ \tau_{D}$ and $(f\otimes \id)(\mathbf{q}_D^{}) \subset \mathbf{q}_{D'}^{}$.
\end{definition}

We denote by 
$
\mathsf{Crys}(R)
$
the category of $z$-isocrystals with Hodge-Pink structures over $R$.
Then \cite[Theorem 2.2.5]{Har11} says that the weakly admissible $z$-isocrystals with Hodge-Pink structures over $R$ form a full subcategory 
\[
\mathsf{Crys}(R)^\textrm{wa}
\]
of $\mathsf{Crys}(R)$ and the following properties hold.

\begin{itemize}
\item The category $
\mathsf{Crys}(R)^\textrm{wa}
$ is abelian, and  is closed under the formations of direct sums, tensor products, and duals.
\item A direct sum $\ul D\oplus \ul D'$ for $\ul D, \ul D'\in \mathsf{Crys}(R)$ is weakly admissible if and only if each summand is weakly admissible.
%\item The image of any morphism $f\colon \ul{D}_1\ra \ul{D}_2$ in $\mathsf{Crys}(R)^\textrm{wa}$ is a strictly subobject of $\ul{D}_2$.
\end{itemize}

Since $R$ is discretely valued, it follows by \cite[Lemma 7.4]{GL11} that any local shtuka $\ul{\wh M}$ over $R$ is \textit{rigidified} in the sense of \cite[Definition 3.5.9]{HK20}; also see \cite[Lemma2.3.1]{Har11}, and 
there exists a category equivalence 
\[
\bH\colon \mathsf{QLS}(R)\lra \mathsf{Crys}(R)^\textrm{wa}
\] 
such that $\bH$ is a $k\zp$-linear exact tensor functor. 
It is called the \textit{mysterious functor}.
If $\bH(\ul{\wh M})=(D, \tau_D, \mathbf{q}_D^{})$, then we have
\[
(D,\tau_D) \cong (\wh M\otimes_{R\zb}k\zp, \tau_{\wh M}\otimes \id)
\]
and $\rk \bH(\ul{\wh M})=\rk\ul{\wh M}$; see \cite[\S 3 and \S 7]{GL11}, \cite[\S 2.3]{Har11}, and \cite[Example 3.5.7]{HK20} for details.
%We remark that, in some case, $\bH$ is defined as
%$(D, \tau_D)=\wh\sig\ast (\wh M\ot_{R\zb}k\zp, \wh\sig\ast \tau_{\wh M}\ot \id)$
%
% there is a minor difference between formulations of $\bH$ in and \cite[\S 3.5]{HK20}, but there is no problem (cf.\ \cite[Example 3.5.7]{HK20}).  

Combining the mysterious functor $\bH$ and the quasi-inverse of $\wc V_\frp\colon \mathsf{QLS}(R) \ra \mathsf{Rep}_{Q_\frp}(\cG_L)^\crys$, we obtain a category equivalence 
\[
D_\crys \colon  \mathsf{Rep}_{Q_\frp}(\cG_L)^\crys\lra \mathsf{Crys}(R)^\textrm{wa}.
\]
Hence $ \mathsf{Rep}_{Q_\frp}(\cG_L)^\crys$ is also an abelian category.
Moreover, we have the following.

\begin{proposition}\label{prop.closed}
The category $\mathsf{Rep}_{Q_\frp}(\cG_L)^\crys$ is closed under the formations of subobjects, quotients, direct sums, tensor products, and duals.
\end{proposition}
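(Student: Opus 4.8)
The plan is to deduce Proposition~\ref{prop.closed} from the category equivalences already in place, namely $\wc V_\frp\colon\mathsf{QLS}(R)\lra\mathsf{Rep}_{Q_\frp}(\cG_L)^\crys$ and $D_\crys\colon\mathsf{Rep}_{Q_\frp}(\cG_L)^\crys\lra\mathsf{Crys}(R)^{\mathrm{wa}}$, together with the structural facts already recorded for $\mathsf{Crys}(R)^{\mathrm{wa}}$. Since these functors are exact tensor equivalences, tensor products and duals transport directly: if $V,V'\in\mathsf{Rep}_{Q_\frp}(\cG_L)^\crys$ then $D_\crys(V)\ot D_\crys(V')$ and $D_\crys(V)^\vee$ lie in $\mathsf{Crys}(R)^{\mathrm{wa}}$ by the bulleted properties quoted from \cite[Theorem 2.2.5]{Har11}, and applying the quasi-inverse of $D_\crys$ shows $V\ot V'$ and $V^\vee$ are crystalline. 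Likewise, a direct sum $V\op V'$ is crystalline because $D_\crys(V\op V')\cong D_\crys(V)\op D_\crys(V')$ is weakly admissible, using the second bullet (a direct sum is weakly admissible iff each summand is); conversely this also gives that a crystalline representation splitting as a direct sum has crystalline summands, which will be relevant below.

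The substantive point is closedness under subobjects and quotients. First I would observe that $\mathsf{Rep}_{Q_\frp}(\cG_L)^\crys$ is an abelian subcategory of $\mathsf{Rep}_{Q_\frp}(\cG_L)$, since $D_\crys$ is an equivalence onto the abelian category $\mathsf{Crys}(R)^{\mathrm{wa}}$; so for a morphism of crystalline representations, its kernel, image, and cokernel \emph{computed inside $\mathsf{Rep}_{Q_\frp}(\cG_L)^\crys$} are crystalline. What must still be checked is that these agree with the kernel, image, cokernel computed in the ambient category $\mathsf{Rep}_{Q_\frp}(\cG_L)$, i.e. that the inclusion functor is exact — equivalently, that an arbitrary $\cG_L$-stable $Q_\frp$-subspace $V\subset \wc V_\frp\ul{\wh M}$ of a crystalline representation is again crystalline. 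This is exactly the content of Lemma~\ref{lemma.sub}: it produces a local shtuka $\ul{\wh M'}$ over $R$ with $\wc V_\frp\ul{\wh M'}\cong V$, so $V$ is crystalline. Given a surjection $\wc V_\frp\ul{\wh M}\thra W$ of crystalline representations, the kernel $V$ is a $\cG_L$-stable subspace, hence crystalline by Lemma~\ref{lemma.sub}; then $W\cong \wc V_\frp\ul{\wh M}/V$ is the cokernel in the abelian category $\mathsf{Rep}_{Q_\frp}(\cG_L)^\crys$ of the inclusion $V\hra\wc V_\frp\ul{\wh M}$ of crystalline representations (since exactness of $\wc V_\frp$ and of $D_\crys$ identifies this quotient with $D_\crys^{-1}$ of the cokernel in $\mathsf{Crys}(R)^{\mathrm{wa}}$), and is therefore crystalline. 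So quotients reduce to subobjects plus the abelian structure, and subobjects are Lemma~\ref{lemma.sub}.

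I would then assemble the argument in the order: (i) subobjects, directly from Lemma~\ref{lemma.sub}; (ii) quotients, via a short exact sequence $0\to V\to \wc V_\frp\ul{\wh M}\to W\to 0$, closedness under subobjects applied to $V$, and exactness of the equivalence $D_\crys$ to identify $W$ with a weakly admissible quotient; (iii) direct sums, tensor products, duals, transported through $D_\crys$ using the quoted closure properties of $\mathsf{Crys}(R)^{\mathrm{wa}}$ and the fact that $D_\crys$ is an exact tensor functor respecting duals. The main obstacle is genuinely just step (i)/(iii)'s interface with the ambient category — i.e. knowing that the abelian structure on $\mathsf{Rep}_{Q_\frp}(\cG_L)^\crys$ induced by $D_\crys$ is compatible with forming kernels/cokernels of $\cG_L$-representations; but this compatibility is precisely guaranteed by Lemma~\ref{lemma.sub} (subspaces stay crystalline) together with the already-established exactness of $\wc V_\frp$ and $D_\crys$, so no new technical input is needed beyond what the excerpt provides.
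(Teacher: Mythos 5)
Your proposal is correct and follows essentially the same route as the paper: subobjects via Lemma~\ref{lemma.sub}, quotients from the abelian-category structure, and the remaining operations transported through the equivalence $D_\crys$ from $\mathsf{Crys}(R)^{\mathrm{wa}}$. You simply supply more detail than the paper's terse proof, in particular spelling out why the cokernel computed inside $\mathsf{Rep}_{Q_\frp}(\cG_L)^\crys$ agrees with the one in the ambient category.
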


\begin{proof}
By  Lemma \ref{lemma.sub}, $\mathsf{Rep}_{Q_\frp}(\cG_L)^\crys$ is closed under subobjects.
Hence it is closed under quotients because it is an abelian category.
Remained properties come form $\mathsf{Crys}(R)^\textrm{wa}$ via the category equivalence $D_\crys$.
\end{proof}

%in particular,  the category is abelian, and $V\oplus V' \in \mathsf{Rep}_{Q_\frp}(\cG_L)$ is crystalline if and only if each summand is crystalline.
%For each $V\in \mathsf{Rep}_{Q_\frp}(\cG_L)^\crys$, we define the \textit{Hodge-Pink weights} (\textit{Newton slope}, \textit{Hodge slope}, $\cdots$) of $V$ by these of $D_\crys(V)$.  
%
%\begin{remark}
%\red{local shtuka のde Rham 実現によっておなじものが復元されることをほそく}
%\end{remark}

\subsection{Weights of crystalline representations}
We define the \textit{weights} of $z$-adic crystalline representations as follows.
\begin{definition}\label{definition.wd}
Let $V\in \mathsf{Rep}_{Q_\frp}(\cG_L)^\crys$ be of dimension $r$ and put $D_\crys(V)=(D, \tau_D, \mathbf{q}_D^{})$.
Since $\wh\sig^{[k:\bF_\frp]}$ is identity on $k$, we have 
$\wh\sig^{[k:\bF_\frp]*}D \cong D $ and get a $k\zp$-linear automorphism 
\[
\tau_D^{[k:\bF_\frp]}:=\tau_D\circ \cdots \circ \wh\sig^{([k:\bF_\frp]-1)*}\tau_D \colon D 
\overset{\sim}{\lra} D.
\]
Define the \textit{characteristic polynomial of $V$} by 
\[
P(X; V)= \det(X-\tau_D^{[k:\bF_\frp]}), 
\]
which has coefficients in $Q_\frp$ by \cite[Lemma 8.1.4]{BP09}.
We say that $V$ has \textit{weights $w_1,\ldots, w_r$} if $P(X; V)$ has coefficients in $Q$ and decomposes as  
\[
P(X; V)=\prod_{i=1}^r (X-\alpha_i)\es\es \textrm{with}\es\es |\alpha_i|=(\# k)^{w_i}
\] 
for some $\alpha_i\in Q^\alg$.
\end{definition}

\begin{proposition}\label{proposition.weight}
If $\ul{M}$ is an $A$-motive of rank $r$ over $L$ with good reduction, then
 $P(X; \wc V_\frp \ul{M})=P(X; \ul{M})$.
In particular, $\ul{M}$ has weights $w_1,\ldots, w_r$ in the sense of Definition \ref{definition.wmot} if and only if $\wc V_\frp \ul{M}$ has weights $w_1,\ldots, w_r$ 
in the sense of Definition \ref{definition.wd}.
\end{proposition}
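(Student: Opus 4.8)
The plan is to reduce the comparison of characteristic polynomials to the explicit description of $D_\crys$ via the composite functor $\bH \circ (\wc V_\frp)^{-1}$, together with the local shtuka $\ul{\wh M}_\frp(\ul M)$ attached to $\ul M$ in Example \ref{ex.motiveshtuka}. Recall that $\wc T_\frp \ul M \cong \wc T_\frp \ul{\wh M}_\frp(\ul M)$ as $\cG_L$-modules, hence $\wc V_\frp \ul M \cong \wc V_\frp \ul{\wh M}_\frp(\ul M)$, so by definition of $D_\crys$ we have $D_\crys(\wc V_\frp \ul M) \cong \bH(\ul{\wh M}_\frp(\ul M))$. Writing $\bH(\ul{\wh M}_\frp(\ul M)) = (D, \tau_D, \mathbf{q}_D)$, the recalled property of the mysterious functor gives the crucial identification $(D, \tau_D) \cong (\wh M_\frp(\ul M) \otimes_{R\zb} k\zp,\ \tau_{\wh M_\frp(\ul M)} \otimes \id)$ of $z$-isocrystals (the Hodge-Pink lattice $\mathbf{q}_D$ plays no role here, since $P(X;-)$ only sees $(D,\tau_D)$). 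So everything comes down to comparing the Frobenius-type operator on $\wh M_\frp(\ul M) \otimes_{R\zb} k\zp$ with the operator $\tau_k^{[k:\bF_q]}$ on $M_k \otimes_{A_k} Q_k$ used to define $P(X;\ul M)$.

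\textbf{Key steps.} First I would treat the case $\deg\frp = 1$, where $\wh q = q$, $\wh\sig = \sig$, $A_{\frp,R} = R\zb$, and $\ul{\wh M}_\frp(\ul M) = \ul\cM \otimes A_{\frp,R}$; here $\wh M_\frp(\ul M) \otimes_{R\zb} k\zp = \cM \otimes_{A_R} k\zp$, which is the completion of $M_k = \cM \otimes_{A_R} A_k$ at the prime of $A_k$ above $\frp$, with $\tau_D$ induced by $\tau_k$. Since $\tau_D^{[k:\bF_\frp]} = \tau_D^{[k:\bF_q]}$ is the completion at $\frp$ of $\tau_k^{[k:\bF_q]}$ acting on $M_k \otimes_{A_k} Q_k$, and the characteristic polynomial of a linear operator is unchanged by the flat base change $Q_k \to (Q_k)_\frp^{\wedge}$ (equivalently $Q \to Q_\frp$), we get $\det(X - \tau_D^{[k:\bF_\frp]}) = \det(X - \tau_k^{[k:\bF_q]})$, i.e. $P(X;\wc V_\frp\ul M) = P(X;\ul M)$. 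Second, for $\deg\frp > 1$ I would invoke the decomposition $A_{\frp,R} = \prod_{i=0}^{\deg\frp-1} A_{\frp,R}/\fra_i$ with each factor $\cong R\zb$, under which $\wh M_\frp(\ul M)$ is the $\fra_0$-component equipped with $\tau_\cM^{\deg\frp} \otimes \id$; on the residue side this matches the decomposition of $A_k \otimes_{\bF_q} \bF_\frp$ (or of the $\frp$-adic completion of $A_k$) into $\deg\frp$ pieces cyclically permuted by $\sig$, so that the operator $\tau_k^{[k:\bF_q]} = (\tau_k^{\deg\frp})^{[k:\bF_\frp]}$ — recall $[k:\bF_q] = [k:\bF_\frp]\deg\frp$ — restricted to one such piece is precisely $\tau_D^{[k:\bF_\frp]}$ after completion. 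The characteristic polynomial of $\tau_k^{[k:\bF_q]}$ on the whole space $M_k \otimes_{A_k} Q_k$ equals that of its restriction to a single $\sig^{\deg\frp}$-stable factor (as the factors are Galois-conjugate and the coefficients lie in $Q$ by \cite[Lemma 8.1.4]{BP09}), giving the desired equality again.

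\textbf{Main obstacle.} The delicate point is the second step: carefully matching the cyclic decomposition of $A_{\frp,R}$ used to build the local shtuka with the decomposition of the $\frp$-adic completion of $A_k$, and checking that passing from $\tau_k^{[k:\bF_q]}$ on the full $Q_k$-space to $\tau_D^{[k:\bF_\frp]} = (\tau_k^{\deg\frp})^{[k:\bF_\frp]}$ on a single factor does not alter the characteristic polynomial — this uses that the factors are permuted transitively and that $\det(X - \tau_k^{[k:\bF_q]})$ already has coefficients in $Q \subset Q_\frp$, so the factor-wise and global computations agree. Once $P(X;\wc V_\frp\ul M) = P(X;\ul M)$ is established, the ``in particular'' clause is immediate: Definitions \ref{definition.wmot} and \ref{definition.wd} both declare the weights $w_1,\dots,w_r$ in terms of the roots $\alpha_i \in Q^\alg$ of the very same polynomial via $|\alpha_i| = (\#k)^{w_i}$, so the two notions of weights literally coincide.
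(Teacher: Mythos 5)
Your argument is correct and follows essentially the same route as the paper's proof: both pass through the local shtuka $\ul{\wh M}_\frp(\ul M)$ of Example \ref{ex.motiveshtuka}, identify $D\cong M_k\otimes_{A_k}k\zp$ via the $\fra_0$-component of $A_{\frp,k}$, and observe that $\tau_D^{[k:\bF_\frp]}=\tau_k^{[k:\bF_q]}\otimes\id$, so the characteristic polynomials agree by invariance under the field embedding $Q_k\hra k\zp$. The only difference is cosmetic: the paper treats $\deg\frp=1$ and $\deg\frp>1$ uniformly through $A_R\to A_{\frp,R}/\fra_0\cong R\zb$, whereas you separate the two cases.
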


\begin{proof}

We view $R\zb$ as an $A_R$-algebra via $A_R\ra A_{\frp, R}/\fra_0\cong R\zb$ as in Example \ref{ex.motiveshtuka}.
Let $\ul{\cM}$ be a good model of $\ul{M}$ and  $\ul{\wh M}:=\ul{\wh M}_\frp(\ul{M})$ the associated local shtuka over $R$, so that $\ul{\wh M}=(\wh M, \tau_{\wh M})$ with $\wh M=\cM\otimes_{A_R}R\zb$ and  $\tau_{\wh M}=\tau_{\cM}^{\deg\frp}\otimes \id$.
For the reduction $\ul{M}_k=(M_k=\cM\otimes_{A_R}A_k, \tau_{k}=\tau_{\cM}\otimes \id)$ of $\ul{M}$, 
if we put $\bH(\ul{\wh M})=(D, \tau_D, \mathbf{q}_D)$, then we have 
\[
D\cong \wh{M}\otimes_{R\zb}k\zp\cong (\cM\otimes_{A_R}A_k)\otimes_{A_k}k\zp=M_k\otimes_{A_k}k\zp,
\]
where the $A_k$-algebra structure of $k\zp$ is $A_k\ra A_{\frp, k}/\fra_0\cong k\zb \hra k\zp$.
Since $\tau_D^{[k:\bF_\frp]}=\tau_k^{[k:\bF_q]}\otimes \id$, we have 
$
P(X; \wc V_\frp\ul{M})=P(X; \ul{M}).
$ 
\end{proof}

\begin{example}

 For each $n\in \bZ$, we define  the local shtuka $\ul{\mathbbm{1}}(n)$ to be  
 \[
 \ul{\mathbbm{1}}(n)=(R\zb\cdot \mathbf{e},\es  \wh\sig\ast \mathbf{e}\mapsto (z-\zeta)^{-n} \mathbf{e}),
 \]
 so that $\ul{\mathbbm{1}}(n)=\ul{\mathbbm{1}}(1)^{\ot n}$.
 %The $\cG_L$-action on $\wc V_\frp\ul{\mathbbm{1}}(1)$
  %is given by the $z$-adic cyclotomic character $\chi_z$ (see Example \ref{example.cyclotomic}).
% Since $\ul{\mathbbm{1}}(n)=\ul{\mathbbm{1}}(1)^{\ot n}$ if $n\geq 0$ and $\ul{\mathbbm{1}}(n)=\ul{\mathbbm{1}}(-1)^{\ot(-n)}$ if $n<0$, 
Thus the characteristic polynomial of $Q_\frp(n)=\wc V_\frp \ul{\mathbbm{1}}(n)$ is 
\[
P(X; Q_\frp(n))=X-z_{}^{-n[k:\bF_\frp]}\in Q[X].
 \]
The associated $z$-isocrystal with Hodge-Pink structure $\ul{D}=D_\crys(Q_\frp(n))=\bH(\ul{\mathbbm{1}}(n))$ is
\[
\ul{D}=\left(k\zp\cdot \mathbf{e},\es \wh\sig\ast \mathbf{e}\mapsto z^{-n}\mathbf{e},\es \mathbf{q}_D^{}=(\zz)^n\mathbf{p}_D^{}\right).
\] 
Hence
$
t_{\mathrm{N}}(Q_\frp(n))=t_{\mathrm{H}}(Q_\frp(n))=-n.
$
\end{example}

\begin{lemma}\label{lemma.intweight}
For each $n\in \bZ$, the weight of $Q_\frp(n)$ is 
\[
-n\left(1+\frac{1}{\deg\frp}\sum_{\frl\neq \frp}{\deg \frl}\ord_{\frl}(z)\right).
\]
In particular, the weight of $Q_\frp(n)$ is an integer under Condition \ref{condition.main}.
\end{lemma}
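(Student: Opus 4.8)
The strategy is to reduce the computation of the weight of $Q_\frp(n)$ to an absolute-value computation for the scalar $z^{-n[k:\bF_\frp]}$, which is the single root of the characteristic polynomial $P(X; Q_\frp(n)) = X - z^{-n[k:\bF_\frp]}$ exhibited in the preceding example. By Definition \ref{definition.wd}, the weight $w$ is the unique real number with $|z^{-n[k:\bF_\frp]}| = (\#k)^w$, where $|\cdot|$ is the normalized absolute value of $Q$ attached to the place $\infty$ and extended to $Q^\alg$. So everything comes down to evaluating $|z|$ in this normalization and expressing $(\#k)$ in terms of $\wh q$ and the residue-degree data.

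\textbf{Key steps.} First I would record that $\#k = \wh q^{[k:\bF_\frp]} = q^{\deg\frp\,[k:\bF_\frp]}$, so that $(\#k)^w = q^{\deg\frp\,[k:\bF_\frp]\,w}$; hence it suffices to compute the power of $q$ in $|z^{-n[k:\bF_\frp]}| = |z|^{-n[k:\bF_\frp]}$ and divide by $\deg\frp$. Second, I would compute $|z|$ using the product formula for the global function field $Q$. The normalized absolute value at a place $\frl$ is $|\cdot|_\frl = (\#\bF_\frl)^{-\ord_\frl(\cdot)} = q^{-\deg\frl\,\ord_\frl(\cdot)}$, and the product formula $\prod_{v}|z|_v = 1$ (over all places $v$ of $Q$, including $\infty$) gives
\[
\log_q|z|_\infty \;=\; -\sum_{\frl}\deg\frl\,\ord_\frl(z)\,\log_q|\cdot| \text{-contributions},
\]
more precisely $\ord_\infty$-valuation aside, $|z|_\infty = \prod_{\frl}q^{\deg\frl\,\ord_\frl(z)}$ where $\frl$ runs over the finite places. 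Since $z$ is a uniformizer at $\frp$ we have $\ord_\frp(z) = 1$, so this splits as $|z|_\infty = q^{\deg\frp}\cdot\prod_{\frl\neq\frp}q^{\deg\frl\,\ord_\frl(z)} = q^{\deg\frp + \sum_{\frl\neq\frp}\deg\frl\,\ord_\frl(z)}$. Third, I would assemble: $|z^{-n[k:\bF_\frp]}|_\infty = q^{-n[k:\bF_\frp](\deg\frp + \sum_{\frl\neq\frp}\deg\frl\,\ord_\frl(z))}$, and setting this equal to $(\#k)^w = q^{\deg\frp\,[k:\bF_\frp]\,w}$ and cancelling $q^{[k:\bF_\frp]}$ yields
\[
w \;=\; -n\cdot\frac{\deg\frp + \sum_{\frl\neq\frp}\deg\frl\,\ord_\frl(z)}{\deg\frp} \;=\; -n\left(1 + \frac{1}{\deg\frp}\sum_{\frl\neq\frp}\deg\frl\,\ord_\frl(z)\right),
\]
which is the claimed formula. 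Finally, the "in particular" clause is immediate: under Condition \ref{condition.main} the sum $\sum_{\frl\neq\frp}\deg\frl\,\ord_\frl(z)$ is divisible by $\deg\frp$, so $\frac{1}{\deg\frp}\sum_{\frl\neq\frp}\deg\frl\,\ord_\frl(z)\in\bZ$ and hence $w\in\bZ$.

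\textbf{Main obstacle.} There is no serious obstacle here; this is a bookkeeping exercise in normalizations. The one point requiring care is getting the sign and the normalization conventions consistent — in particular making sure the convention for $|\cdot|$ at $\infty$ (as the \emph{unique extension of the normalized absolute value of $Q$ with respect to $\infty$}, fixed just before Definition \ref{definition.wmot}) matches the product-formula computation, i.e.\ that $|z|_\infty$ is a positive power of $q$ rather than a negative one, since $z$ is regular outside $\infty$ and therefore has a pole at $\infty$. Once that sign is pinned down, the formula and the integrality consequence follow directly.
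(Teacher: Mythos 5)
Your proposal is correct and follows essentially the same route as the paper: start from $P(X;Q_\frp(n))=X-z^{-n[k:\bF_\frp]}$, evaluate $|z^{-n[k:\bF_\frp]}|$ at $\infty$ via the product formula with the normalized absolute values $|\cdot|_\frl=q^{-\deg\frl\,\ord_\frl(\cdot)}$, and convert to base $\#k=q^{\deg\frp\,[k:\bF_\frp]}$. The only (harmless) imprecision is your closing remark that $z$ is regular outside $\infty$ — the paper only assumes $z\in Q$ with $\ord_\frp(z)=1$, so $\ord_\frl(z)$ may be negative for $\frl\neq\frp$, but this does not affect the computation.
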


\begin{proof}
By $P(X; Q_\frp(n))=X-z_{}^{-n[k:\bF_\frp]}$, the weight of $Q_\frp(n)$ is $w=\log_{\#k}|z^{-n[k:\bF_\frp]}|$.
For each prime $\frl$ of $A$, the normalized $\frl$-adic absolute value $|\cdot|_{\frl}$  of $Q$ is given by 
\[
|a|_\frl=(\#\bF_\frl)^{-\ord_\frl(a)}=q_{}^{-\deg\frl \cdot \ord_\frl(a)}
\]
 for any $a\in Q$.
  It uniquely extends to $Q^\alg$, which is also denoted by $|\cdot|_\frl$.
By the product formula of absolute values (\cite[Chapter II, \S 12, Theorem]{CF67}), we have
\begin{align*}
w=\log_{\# k} |z^{-n[k:\bF_\frp]}|
&=\log_{\# k}\left( |z^{-n[k:\bF_\frp]}|_\frp\inv\cdot \prod_{\frl \neq \frp}|z^{-n[k:\bF_\frp]}|_\frl\inv\right)\\
&=\log_{\# k}(\# \bF_\frp)^{-n[k:\bF_\frp]}
+\sum_{\frl\neq \frp} \log_{\# k}(\#\bF_\frl)^{-n[k:\bF_\frp]\ord_{\frl}(z)}\\
&=\log_{\#k}(\#k)^{-n}+ \sum_{\frl\neq \frp} \log_{\#k}(\#k)^{-n\frac{\deg \frl}{\deg \frp}\ord_{\frl}(z)}\\
&=-n\left(1+\frac{1}{\deg \frp}\sum_{\frl\neq \frp}{\deg \frl}\ord_{\frl}(z)\right)
\end{align*}
as claimed.
\end{proof}

\begin{lemma}\label{lem.char}
Let $V\in \mathsf{Rep}_{Q_\frp}(\cG_L)^\crys$ be of dimension one. 
Then $\cG_L$ acts on $V$ by $\epsilon \cdot\chi_z^{-n}$, where $n=t_{\mathrm{N}}(V)$ and $\epsilon\colon\cG_L\ra Q_\frp^\times$ is a character trivial on ${\cI_L}$.

\end{lemma}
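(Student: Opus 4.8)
The plan is to reduce everything to the structure of one-dimensional objects in $\mathsf{Crys}(R)^{\mathrm{wa}}$ under the equivalence $D_\crys$ and to pin down the isocrystal data explicitly. First I would set $\ul D = D_\crys(V) = (D,\tau_D,\mathbf q_D)$, a weakly admissible $z$-isocrystal with Hodge-Pink structure of rank one. Write $D = k\zp\cdot\mathbf e$. Then $\tau_D(\hsig\ast\mathbf e) = c\cdot\mathbf e$ for some $c\in k\zp^\times\cdot z^{?}$; more precisely, since $\tau_D$ is an isomorphism $\hsig\ast D[\zf]\to D[\zf]$ (or rather an isomorphism after inverting $z$), we have $\tau_D(\hsig\ast\mathbf e) = u z^{n'} \mathbf e$ for a unit $u\in k\zp^\times$ and $n' = \ord_z(\det\tau_D) = t_{\mathrm N}(\ul D) =: -n$ in the normalization used for $\ul{\mathbbm 1}(n)$ (so $t_{\mathrm N}(\ul{\mathbbm 1}(n)) = -n$, hence here $t_{\mathrm N}(\ul D) = -n$ forces the exponent to be $-n$, matching the sign in the statement). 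Weak admissibility gives $t_{\mathrm H}(\ul D) = t_{\mathrm N}(\ul D)$, so the single Hodge-Pink weight $\mu_1$ of $\ul D$ equals $n$, i.e.\ $\mathbf q_D = (\zz)^n\mathbf p_D$, exactly the Hodge-Pink datum of $\bH(\ul{\mathbbm 1}(n)) = D_\crys(Q_\frp(n))$.

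Next I would compare $\ul D$ with $D_\crys(Q_\frp(n)) = \ul D(n)$. Form $\ul D' := \ul D\otimes D_\crys(Q_\frp(-n)) = \ul D \otimes \bH(\ul{\mathbbm 1}(-n))$. This is again rank one in $\mathsf{Crys}(R)^{\mathrm{wa}}$ (the category is closed under tensor products and the tensor of weakly admissible objects of matching slopes stays weakly admissible), and by construction $t_{\mathrm N}(\ul D') = 0$ and its Hodge-Pink lattice is $\mathbf p_{D'}$, i.e.\ $\ul D' = (k\zp\cdot\mathbf e',\ \hsig\ast\mathbf e'\mapsto u'\mathbf e',\ \mathbf q_{D'} = \mathbf p_{D'})$ for a unit $u'\in k\zp^\times$. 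The point is now that such a $\ul D'$ corresponds to an \emph{unramified} crystalline representation: since the Hodge-Pink lattice is the tautological one, the associated local shtuka has $\tau$ an isomorphism of $R\zb$-modules (not merely after inverting $\zz$), hence the $\frp$-adic realization $\wc T_\frp$ is computed over $R\zb$ without any $L^\sep$ being needed beyond the residue-field extension, so $\cI_L$ acts trivially. Concretely, $\wc V_\frp$ of such a shtuka is determined by the Frobenius $u'$ acting on $D'\otimes k^\sep\zp$, and the inertia acts trivially; call the resulting character $\epsilon\colon\cG_L\to Q_\frp^\times$, which factors through $\cG_k$. Therefore $V \cong \wc V_\frp\ul D' \otimes Q_\frp(n)$ as $\cG_L$-representations, i.e.\ $\cG_L$ acts on $V$ by $\epsilon\cdot\chi_z^{-n}$ (using $\wc V_\frp\ul{\mathbbm 1}(n) = Q_\frp(n)$, on which $\cG_L$ acts by $\chi_z^{n}$, and being careful that the twist by $Q_\frp(-n)$ contributes $\chi_z^{-n}$ — the sign convention here must be matched to that of Example~\ref{example.cyclotomic}).

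The main obstacle I expect is the clean identification ``$\ul D'$ with trivial Hodge-Pink lattice $\Leftrightarrow$ unramified representation,'' and getting the slope/sign bookkeeping exactly consistent with the conventions already fixed in the excerpt (the definition of $\ul{\mathbbm 1}(n)$, of $t_{\mathrm N}$, of $\chi_z$, and of $Q_\frp(n)$). For the first point I would either invoke directly the description of $\wc T_\frp$ of a local shtuka over $R$ whose $\tau$ extends to an isomorphism over $R\zb$ — in that case $\wc T_\frp\ul{\wh M} = (\wh M\otimes_{R\zb} R^\ur\zb)^{\wh\tau}$ and the inertia, which acts trivially on $R^\ur\zb$, acts trivially — or argue via $\bH$ that a weakly admissible $z$-isocrystal of rank one with trivial Hodge-Pink structure has $\bH^{-1}$ landing in the subcategory of effective local shtukas with invertible $\tau$, whose realizations are visibly unramified. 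A small additional point worth spelling out is that a rank-one $\cG_L$-representation is automatically crystalline once we know it is a twist of $\chi_z^{-n}$ by an unramified character, but here we are going the other way, so all we need is existence of the decomposition, which the above tensor construction supplies. Everything else — that $t_{\mathrm N}(\ul D) = n$ forces the Frobenius exponent, that weak admissibility forces $t_{\mathrm H} = t_{\mathrm N}$ in rank one, and that tensoring by $\ul{\mathbbm 1}(-n)$ normalizes the slopes — is routine from the definitions recalled above.
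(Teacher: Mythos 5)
Your proposal takes a genuinely different route from the paper. The paper's proof stays entirely on the local-shtuka side: it cites \cite[Lemma 3.2.3]{HK20} to write $\tau_{\wh M}=u(z-\zeta)^n$ for a unit $u\in R\zb^\times$, factors $\ul{\wh M}\cong\ul{\wh N}\otimes\ul{\mathbbm{1}}(-n)$ with $\tau_{\wh N}=u$, and then shows by an explicit calculation with the coefficients of an element $c=\sum c_iz^i\in\wc T_\frp\ul{\wh N}\subset L^\sep\zb$ that all $c_i$ lie in $L^\ur$, whence $\epsilon|_{\cI_L}$ is trivial. You instead pass to $D_\crys(V)$, use rank-one weak admissibility (to pin down the Hodge-Pink lattice), twist by a power of $D_\crys(Q_\frp(1))$ to normalize the slope and the lattice simultaneously, and argue that the residual isocrystal is unramified because its Frobenius is a unit and its Hodge-Pink lattice is tautological. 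Both approaches perform the same underlying decomposition, but the paper's is more self-contained: it never needs the implication ``unit Frobenius plus trivial Hodge-Pink lattice $\Rightarrow$ unramified representation,'' which you correctly identify as your main obstacle but leave open --- you would have to check that $\bH^{-1}$ of such a weakly admissible object is an \'etale local shtuka (one with $\tau$ an isomorphism already over $R\zb$), and then justify that \'etale local shtukas have unramified $\frp$-adic realizations. The paper sidesteps all of this by reading the unit $u$ directly off the shtuka via \cite[Lemma 3.2.3]{HK20} and doing the elementary coefficient computation.

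There is also a genuine sign slip in your bookkeeping. Mid-sentence you set $t_{\mathrm N}(\ul D)=:-n$, which contradicts the lemma's normalization $n=t_{\mathrm N}(V)$, and this surfaces as an internal contradiction at the end: you write $V\cong\wc V_\frp\ul D'\otimes Q_\frp(n)$, state (correctly) that $\cG_L$ acts on $Q_\frp(n)$ by $\chi_z^n$, yet conclude that $\cG_L$ acts on $V$ by $\epsilon\cdot\chi_z^{-n}$ --- inconsistent unless $n=0$. With the paper's convention $n=t_{\mathrm N}(V)$, the rank-one relations $t_{\mathrm H}(\ul D)=-\mu_1$ and $t_{\mathrm H}=t_{\mathrm N}=n$ give $\mu_1=-n$ and $\mathbf q_D=(\zz)^{-n}\mathbf p_D$; you should therefore tensor with $D_\crys(Q_\frp(n))$ (which has $t_{\mathrm N}=-n$ and $\mathbf q=(\zz)^{n}\mathbf p$), so that $\ul D'=\ul D\otimes D_\crys(Q_\frp(n))$ has slope $0$ and tautological lattice, and conclude $V\cong\epsilon\otimes Q_\frp(-n)$, i.e.\ $\cG_L$ acts by $\epsilon\cdot\chi_z^{-n}$. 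The structure of your argument is sound once this bookkeeping is fixed and the \'etale step is actually justified.
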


\begin{proof}
Let $\ul{\wh M}=(\wh M, \tau_{\wh M})$ be a local shtuka over $R$ of rank one such that $\wc V_\frp\ul{\wh M}=V$.
By \cite[Lemma 3.2.3]{HK20} there is a unit $u\in R\zb^\times$ such that $\tau_{\wh M}=u(z-\zeta)^n$.
Let $\ul{\wh N}=(R\zb\cdot \mathbf{e}, \tau_{\wh N})$ with $\tau_{\wh N}(\wh\sig\ast \mathbf{e})=u\mathbf{e}$ and denote by $\epsilon\colon \cG_L\to Q_\frp^\times$ the character coming form $\wc V_\frp\ul{\wh N}$. 
Then $\ul{\wh M}=\ul{\wh N}\otimes \ul{\mathbbm{1}}(-n)$ and so $\cG_L$ acts on $V$ via $\epsilon\cdot \chi_z^{-n}$.

By mimicking the argument of \cite[\S 5.2]{HS20}, we will show that 
$\epsilon$ is trivial on $\cI_L$. 
Since $\wc T_\frp\ul{\wh N}$ is of rank one, we may regard $\wc T_\frp\ul{\wh N}$ as an $A_\frp$-submodule of $L^\sep\zb$. 
Let $c=\sum_{i=0}^\infty c_iz^i\in \wc T_\frp\ul{\wh N}\subset L^\sep\zb$.
Then we have $\tau_{\wh N}(\wh \sig(c))=c$.
We set $u=\sum_{i=0}^\infty u_iz^i\in R\zb^\times$, so that $u_0\in R^\times$.
Since $\tau_{\wh N}=u$, we have $u\sum_{i=0}^\infty c_i^{\wh q}z^i=c=\sum_{i=0}^\infty c_iz^i$.
This implies
\[
c_0=u_0\cdot c_0^{\wh q} \es\es\es \text{and}\es\es\es c_i-u_0\cdot c_i^{\wh q}=\sum_{j=1}^{i}u_j\cdot c_{i-j}^{\wh q} \es\es (i\geq 1).
\]
In particular, we have 
\[
c_0^{\wh q-1}=u_0^{-1} \es\es\es \text{and}\es\es\es 
\frac{c_i}{c_0}-\left(\frac{c_i}{c_0}\right)^{\wh q}=\sum_{j=1}^i \frac{u_j}{u_0}\cdot \left(\frac{c_{i-j}}{c_0}\right)^{\wh q}
\]
and so for each $i\geq 0$, we have $c_i\in L^\ur$ the maximal unramified extension of $L$.
Since $\epsilon$ satisfies $\epsilon(g)c=\sum_{i=0}^\infty g(c_i)z^i$ for any $g\in \cG_L$, the restriction $\epsilon|_{\cI_L}$ is trivial.
\end{proof}

%%%%%%%%%%%%%%%%%%%%%%%%%%%%%%%%%%%%%%%%%%%%%%%%%%%%%%%%%%%%%%%%%%%%%%%%%%%%%%%%%%%%%%%%%%%%%%%%%%%%%%%%%%%%%%%%%%%%%%%%%%%%
\section{Finiteness of torsion points}

In this section, for the $z$-adic cyclotomic extension $L_\infty=LF_{\frp,\infty}$ of $L$ and $A$-motives $\ul{M}$ over $L$ with good reduction, we study the finiteness of 
\[
(\wc V\ul{M}/\wc T\ul{M})^{\cG_{L_\infty}}=\prod_{\frl}(\wc V_\frl\ul{M}/\wc T_\frl\ul{M})^{\cG_{L_\infty}}.
\]
%under Condition \ref{condition.main}; 
%\begin{itemize}
%\item[(1)] $\displaystyle \sum_{\frl\neq \frp}\frac{\deg \frl}{\deg \frp}\ord_\frl(z)\in \bZ$, where $\frl$ runs through all primes of $A$ with $\frl\neq \frp$.
%\item[(2)] Either $L/\bF_\frp\dpl \zeta \dpr$ is separable or $p\neq 2$.
%\end{itemize}
We notice that, in proving the above, we are allowed to replace $L$ by its finite extensions.

\subsection{Lubin-Tate characters}
 Let us recall Lubin-Tate theory over equal characteristic local fields, which gives an ingredient to 
prove a key lemma (Lemma \ref{lemma.key}).
We follow the expositions in \cite{HS20}. 

Let $E$ be a finite extension of $Q_\frp$ and denote by $\cO_E$ the valuation ring of $E$.
We write 
\[
\Gamma_E:=\Hom_{Q_\frp}(E,L^\alg)
\]
 for the set of $Q_\frp$-algebra homomorphisms $E\to L^\alg$, where the $Q_\frp$-algebra structure of $L^\alg$ is given by $\gamma\colon Q_\frp\to L\subset L^\alg$.
We assume that $\psi E:=\psi(E) \subset L$ for all $\psi\in \Gamma_E$.

Let $\bF$ be the residue field of $E$ and put $\wt q:=\#\bF$. 
Choosing a uniformizing parameter $y\in E$, we identify $\cO_E=\bF\dbl y \dbr$.
%We fix an $\bF_\frp$-homomorphism $\bF\hra k$.
%Then $\Hom_{\bF_\frp}(\bF,k) \cong \bZ/f\bZ$ under the map that sends $j\in \bZ/f\bZ$ to the homomorphism
Take a $\psi\in \Gamma_E$.
%We denote by $j_\psi\in \bZ/f\bZ$ for the element such that $\psi(\lambda)=\lambda^{{\wt q}^{j_\psi}}$ for each $\lam\in \bF$.
Let $\wh G_\psi=\wh \bG_{a,R}=\Spf R\dbl X \dbr$ be the formal additive group over $R$ whose action of $\cO_E=\bF\dbl y \dbr$  is 
given by 
\begin{align*}
[b]&\colon X\mapsto \psi(b)\cdot X\es \text{for} \es b\in \bF,\\
[y]&\colon X\mapsto X^{\wt q}+\psi(y)\cdot X.
\end{align*}
Then $\wh G_\psi$ is a Lubin-Tate formal group over $R$ associated with $\cO_{\psi E}=\psi(\cO_E)$.

The $\cG_L$-action on the Tate module $T_\frp \wh G_\psi$ of $\wh G_\psi$ is computed as follows.
We take a system $\{\ell_n\}_{n=0}^\infty$ of elements of $L^\sep$ such that
$\ell_0^{\wt q-1}=-\psi(y)$ and $\ell_n^{\wt q}+\psi(y)\ell_n=\ell_{n-1}$. 
Namely, we have 
\[
[y](\ell_0)=\ell_0^{\wt q}+\psi(y)\ell_0=0\es \text{and}\es [y](\ell_n)=\ell_n^{\wt q}+\psi(y)\ell_n=\ell_{n-1}.
\]
Thus $\ell_{n-1}\in \wh G[y^n](L^\sep)$ and $T_\frp\wh G_\psi=\cO_E\cdot (\ell_{n-1})_{n}$.
For the power series 
\[
\ell_{y,\psi}^+:=\sum_{n=0}^\infty \ell_ny^n\in L^\sep\dbl y\dbr
\]
and the field $\psi E_y:=\psi E (\ell_n \colon n\in \bZ_{\geq 0}) \subset L^\sep$,
 we have the following.

\begin{definition}[{cf.\ \cite[p.\ 195]{HS20}}]
There is an isomorphism of topological groups
\[
\chi_{y,\psi}\colon \Gal(\psi E_y/\psi E) \overset{\sim}{\lra} \bF\dbl y \dbr^\times=\cO_{E}^\times
\]
satisfying $g\cdot\ell_{y,\psi}^+:=\sum_{n=0}^\infty g(\ell_n)y^n=\chi_{y,\psi}(g)\ell_{y,\psi}^+$ in $\psi E_y\dbl y\dbr$ for any 
$g\in \Gal(\psi E_y/\psi E)$.
The isomorphism $\chi_{y,\psi}$ is independent of the choice of $\ell_{y,\psi}^+$.
We call the character $\chi_{y,\psi}\colon \cG_{\psi E} \to E^\times$
the \textit{cyclotomic character of the field $E$}. 
We define the character 
\[
\chi_{\psi E}\colon \cG_{\psi E} \lra {\psi E^\times}
\]
by $\chi_{\psi E}(g):=\chi_{y,\psi}(g)|_{y= \psi(y)}$ for $g\in \cG_{\psi E}$, and call it 
the \textit{Lubin-Tate character associated with $\psi E$}.
\end{definition}

\begin{remark}\label{rem.LT}
(1) The characters $\chi_{y,\psi}$
and $\chi_{\psi E}$ depend on the choice of $y\in E$, but its restriction to the inertia subgroup $\cI_{\psi E}$ do not.

(2) If $E=Q_\frp, \psi=\gamma$, and $y=z$, then $\chi_{y,\psi}$ coincides with the 
$z$-adic cyclotomic character $\chi_z$.
\end{remark}

Denote by $U_{\psi E}:=\cO_{\psi E}^\times$ the unit group of $\psi E$.
We write $\psi E^\ur$ and $\psi E^\ab$ for the maximal unramified and  abelian extensions of $\psi E$, respectively.
We put $\cG_{\psi E}^\ab=\Gal(\psi E^\ab/\psi E)$ and $\cI_{\psi E}^\ab:=\Gal(\psi E^\ab/\psi E^\ur)$.
Let 
\[
(-, \psi E^\ab/\psi E)\colon \psi E^\times \lra \cG_{\psi E}^\ab
\]
 be the local Artin map of arithmetic normalization in local class field theory.
 It is known that this map induces an isomorphism
 \[
 (-, \psi E^\ab/\psi E)\colon U_{\psi E} \overset{\sim}{\lra} \cI_{\psi E}^\ab.
 \]
By\cite[p.\ 386, Corollary]{LT65}, for each $g\in \cI_{\psi E}$, we have 
\[
g|_{\psi E^\ab}=(\chi_{\psi E}(g)^{-1}, \psi E^\ab/\psi E).
\]
This means that if we regard $\chi_{\psi E}$ as a continuous character $U_{\psi E} \to \psi E^\times$ via the local Artin map, then we have $\chi_{\psi E}(x)=x^{-1}$ for each $x\in U_{\psi E}$.
Let $N_{L/\psi E}\colon L^\times \to \psi E^\times$ be the norm map.
Let $U_L$ and $\cI_{L}^\ab=\Gal(L^\ab/L^\ur)$ be similar as above.

\begin{lemma}\label{lem.com}
We have $\chi_{\psi E}(g)=N_{L/\psi E}(\chi_{L}(g))$ for each $g\in \cI_L$.
This makes the diagram
\[
\xymatrix@C=70pt@R=30pt{
U_L \ar[r]_-{\cong}^{\SSC (-, L^\ab/L)}\ar[d]_{N_{L/\psi E}}& \cI_{L}^\ab \ar[d]^{\mathrm{res}}\\
U_{\psi E} \ar[r]_-{\cong}^{\SSC (-, \psi E^\ab/\psi E)}& \cI_{\psi E}^\ab
}
\]
commutative, where $\mathrm{res}\colon \cI_{L}^\ab\to \cI_{\psi E}^\ab$ is the restriction map.
\end{lemma}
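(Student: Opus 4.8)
The plan is to deduce this from the compatibility of the local Artin map with norms, together with the explicit description of the Lubin–Tate character as inversion on units that was recalled just before the statement. The two assertions are equivalent: once we know $\chi_{\psi E}(g)=N_{L/\psi E}(\chi_L(g))$ for $g\in\cI_L$, the commutativity of the square is just a reformulation, because $(-, L^{\ab}/L)$ restricted to $U_L$ is an isomorphism onto $\cI_L^{\ab}$, the map $\chi_L$ on $\cI_L$ is (via this isomorphism) the inversion $x\mapsto x^{-1}$ on $U_L$, and likewise $\chi_{\psi E}$ is inversion on $U_{\psi E}$; the identity $\chi_{\psi E}\circ(\text{res})=N_{L/\psi E}\circ(\text{inversion})$ on $U_L$ then reads exactly as the claimed commutative diagram after identifying $\cI_L^{\ab}$ with $U_L$ and $\cI_{\psi E}^{\ab}$ with $U_{\psi E}$. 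So the real content is the norm identity on the inertia subgroup.

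**First I would** invoke the fundamental functoriality of the Artin reciprocity map with respect to restriction: for the finite extension $L/\psi E$ (which is legitimate since $\psi E\subset L$ by the standing assumption $\psi E\subset L$), the diagram relating $(-, L^{\ab}/L)\colon L^\times\to\cG_L^{\ab}$ and $(-, \psi E^{\ab}/\psi E)\colon \psi E^\times\to\cG_{\psi E}^{\ab}$ commutes, where the left vertical map is $N_{L/\psi E}$ and the right vertical is restriction $\cG_L^{\ab}\to\cG_{\psi E}^{\ab}$ (here I use the arithmetic normalization consistently, as fixed in the excerpt). Restricting to unit groups this gives the commutative square with $U_L\to\cI_L^{\ab}$, $U_{\psi E}\to\cI_{\psi E}^{\ab}$, $N_{L/\psi E}$, and $\mathrm{res}$ — which is precisely the square in the statement. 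That already settles the diagram; it remains only to translate it into the character identity.

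**Next**, I would feed in the explicit formula recalled from \cite[p.\ 386, Corollary]{LT65}: for $h\in\cI_{\psi E}$ one has $h|_{\psi E^{\ab}}=(\chi_{\psi E}(h)^{-1}, \psi E^{\ab}/\psi E)$, i.e.\ under $(-, \psi E^{\ab}/\psi E)\colon U_{\psi E}\xrightarrow{\sim}\cI_{\psi E}^{\ab}$ the character $\chi_{\psi E}$ corresponds to $x\mapsto x^{-1}$; the analogous statement holds for $L$, giving $\chi_L$ as inversion on $U_L$. Now take $g\in\cI_L$ and let $x\in U_L$ be the element with $(x, L^{\ab}/L)=g|_{L^{\ab}}$, so that $\chi_L(g)=x^{-1}$. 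Pushing through the commutative square, $g|_{\psi E^{\ab}}=\mathrm{res}(g|_{L^{\ab}})$ corresponds under $(-, \psi E^{\ab}/\psi E)$ to $N_{L/\psi E}(x)$, whence $\chi_{\psi E}(g)=N_{L/\psi E}(x)^{-1}=N_{L/\psi E}(x^{-1})=N_{L/\psi E}(\chi_L(g))$, as desired.

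**The main obstacle** is purely bookkeeping: making sure the normalization conventions for the Artin map (arithmetic versus geometric) are matched on both sides so that the signs in "$\chi_{\psi E}(h)^{-1}$" and in the norm-functoriality square are mutually consistent — i.e.\ that the inversion appearing on the $L$-side and on the $\psi E$-side are the \emph{same} inversion, not off by a Frobenius twist. The excerpt has fixed the arithmetic normalization throughout, and the formula quoted from \cite{LT65} is stated with that same normalization, so this should go through cleanly; one should just state explicitly that norm-compatibility of reciprocity maps is being used for the arithmetically normalized Artin map. No genuine difficulty beyond that; there is no calculation to grind through.
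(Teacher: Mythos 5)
Your reduction of the diagram to the norm identity, and the translation between the two via the Lubin--Tate description of $\chi_{\psi E}$ as inversion on units, is correct as far as it goes. The gap is in the very first step: you invoke ``fundamental functoriality of the Artin reciprocity map with respect to restriction'' for the extension $L/\psi E$ as a black box. This is standard when $L/\psi E$ is separable, but in the setup of this paper $L/\psi E$ need \emph{not} be separable --- $\gamma\colon A\to R$ is only required to be an injective $\bF_q$-algebra homomorphism, so $L/F_\frp$, and hence $L/\psi E$, may have a nontrivial purely inseparable part. Norm-compatibility of the Artin map is formulated and proved in the standard references for finite \emph{separable} extensions; for a purely inseparable extension there is something to verify, and that verification is precisely what the lemma is really about. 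The paper makes this explicit: it first handles the separable case by citing local class field theory, then reduces (by transitivity of the norm) to the case where $L/\psi E$ is purely inseparable of degree $p^m$.

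In that remaining case the paper argues directly with the Lubin--Tate power series: choosing a uniformizer $\wh y$ of a purely inseparable extension $\wh E/E$ with $\wh y^{p^m}=y$, solving the tower $\wh\ell_n$ for $\wh E$, and observing that $\ell_n=\wh\ell_n^{p^m}$ solves the tower for $E$, one gets $\ell^+_{y,\psi}=(\ell^+_{\wh y,\wh\psi})^{p^m}$ and hence $\chi_{y,\psi}(g)=\chi_{\wh y,\wh\psi}(g)^{p^m}$; since $N_{L/\psi E}(x)=x^{p^m}$ here, this is exactly the norm identity. Your proposal silently assumes this purely inseparable norm functoriality rather than proving it. To close the gap, you would need either to supply this computation or to give a precise reference that covers the inseparable case under the arithmetic normalization used in the paper.
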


\begin{proof}
If $L/\psi E$ is separable, then it is well-known by local class field theory.
By the transitivity of the norm map, we may assume that  $L/\psi E$ is purely inseparable.
In this case, we have $[L:\psi E]=p^m$ for some $m \geq 0$ and so $N_{L/\psi E}(x)=x^{p^m}$.
We can take a purely inseparable extension $\wh E$ of $E$ such that 
$\psi\colon E\to L$ uniquely extends to an isomorphism $\wh \psi\colon \wh E \overset{\sim}{\lra} L$ of $Q_\frp$-algebras.
For any uniformizing parameter $\wh y\in \wh E$, its $p^m$-th power $\wh{y}^{p^m}\in E$ is a uniformizing parameter of $E$.
Since we consider $\chi_{L}$ and $\chi_{\psi E}$ on $\cI_L$, by Remark \ref{rem.LT} (1), we may assume that $y=\wh{y}^{p^m}$. 
For chosen solutions $\wh{\ell}_n \in L^\sep$ to the equations 
$\wh {\ell}_0^{\wt q-1}=-\wh\psi(\wh y)$ and $\wh{\ell}_n^{\wt q}+\wh\psi(\wh y)\wh{\ell}_n=\wh{\ell}_{n-1}$, 
we put  $\ell_{\wh y,\wh\psi}^+:=\sum_{n=0}^\infty \wh{\ell}_n\wh y^n\in L^\sep\dbl \wh y \dbr$.
Then the cyclotomic character $\chi_{\wh y,\wh\psi}$ of $\wh E$ satisfies 
$ 
g\cdot\ell_{\wh y,\wh\psi}^+=\chi_{\wh y,\wh\psi}(g) \wh{\ell}_{\wh y,\wh\psi}^+
$
for each $g\in \cI_L$.
Putting $\ell_n:=(\wh{\ell}_n)^{p^m}$ for each $n$, we have
$\ell_0^{\wt q-1}=-\psi(y)$, $\ell_n^{\wt q}+\psi(y)\ell_n=\ell_{n-1}$, and 
$\ell_{y,\psi}^+=\sum_{n=0}^\infty \ell_ny^n=(\ell_{\wh y, \wh\psi}^+)^{p^m}$.
Since 
\[
\chi_{y,\psi}(g) \ell_{y,\psi}^+=g\cdot\ell_{y,\psi}^+=(g\cdot\ell_{\wh y,\wh\psi}^+)^{p^m}=\chi_{\wh y,\wh\psi}(g)^{p^m}\ell_{y,\psi}^+,
\]
we have 
$
\chi_{y,\psi}(g)=\chi_{\wh y,\wh\psi}(g)^{p^m}.
$
By $\psi(y)=\wh\psi(\wh y)^{p^m}$ and $\wh\psi\wh E=L$, we get 
\[
\chi_{\psi E}(g)=\chi_{L}(g)^{p^m}=N_{L/\psi E}(\chi_{L}(g)).
\]
This implies the commutativity of the diagram.
\end{proof}

The next lemma is crucial for our results (cf.\ \cite[Lemma 3.5]{KT13}).

\begin{lemma}\label{lemma.key}
%Let $\ul{\wh M}$ be a rank-two local shtuka over $R$.
%Suppose that $\wc V_\frp\ul{\wh M}$ is irreducible, the $\cG_{L_\infty}$-action on it is trivial, and $p\neq 2$.
Let $V$ be a two-dimensional irreducible crystalline representation of $\cG_L$ on which $\cG_{L_\infty}$ acts trivially.
Then there is a finite separable extension $L'$ of $L$ such that $V\mid_{\cG_{L'}}\cong Q_\frp(n)^{\op 2}$ 
%$\wc V_\frp\ul{\wh M}|_{\cG_{L'}}\cong Q_\frp(n)^{\op 2}$ 
for some $n\in \bZ$.  
\end{lemma}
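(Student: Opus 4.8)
The plan is to analyze the restriction of $V$ to $\cG_{L_\infty}$ using the crystalline structure together with the Lubin–Tate machinery recalled above, in the spirit of \cite[Lemma 3.5]{KT13} and \cite[Lemma 2.5]{Oze20}. First I would record the basic consequences of the hypothesis: since $\cG_{L_\infty}$ acts trivially on $V$, the representation $V$ factors through $\Gal(L_\infty/L)$ on a finite-index subgroup, hence the image of $\rho_V$ is abelian after restriction to $\cG_{L_\infty}$; combined with irreducibility of $V$ over $\cG_L$, this forces $V\mid_{\cG_{L'}}$ to become a sum of characters after passing to a suitable finite separable extension $L'/L$ (take $L'$ to be the fixed field of the kernel of the action on a chosen line, or use that the semisimplification over $\overline{Q_\frp}$ of an abelian-image representation splits). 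Passing to such $L'$ — which is allowed, as noted at the start of \S 4 — I may assume $V\mid_{\cG_{L'}}=\chi_1\oplus\chi_2$ for characters $\chi_i\colon \cG_{L'}\to \overline{Q_\frp}^\times$, and by Proposition \ref{prop.closed} each $\chi_i$ is itself crystalline (one-dimensional subquotients of a crystalline representation).

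Next I would pin down the characters using Lemma \ref{lem.char}: each $\chi_i$ equals $\epsilon_i\cdot\chi_z^{-n_i}$ with $\epsilon_i$ unramified and $n_i=t_{\mathrm N}(\chi_i)$. The key input is that $\cG_{L'_\infty}$ (with $L'_\infty=L'F_{\frp,\infty}$) acts trivially on $V$, so $\chi_i$ is trivial on $\cI_{L'_\infty}$; since $\chi_z$ cuts out precisely $L'_\infty$ over $L'$ on the inertia side, this should force $\epsilon_i\mid_{\cI_{L'}}$ to be trivial as well — i.e.\ $\epsilon_i$ is unramified — and moreover it should pin the exponents $n_1,n_2$ to be related. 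Here is where the Lubin–Tate comparison (Lemma \ref{lem.com} and the formula $g\mid_{\psi E^\ab}=(\chi_{\psi E}(g)^{-1},\psi E^\ab/\psi E)$) enters: expressing $\chi_z\mid_{\cI_L}$ via the Lubin–Tate character of $F_\frp$ and using the norm-compatibility across $L/F_\frp$, one computes the restriction of $\chi_i$ to $\cI_{L'}$ explicitly and reads off that triviality on $\cI_{L'_\infty}$ combined with the two-dimensionality and determinant constraint (the Newton slope of $\det V$ is fixed, and $V$ being $2$-dimensional crystalline bounds the Hodge–Pink weights) forces $n_1=n_2=:n$. This is the step I expect to be the main obstacle — it is exactly the place where \cite{Oze20} invokes an analogue of the $p$-adic logarithm (their Lemma 2.4) to control a unit-valued error term, and the remark after Theorem \ref{thm.intro} flags that the absence of such a tool is what restricts us to rank two; so I would either quote the two-dimensional version directly or carry out the $2\times2$ matrix computation by hand, using that an irreducible $2$-dimensional crystalline representation whose inertia acts through an abelian quotient of $\cI_L$ that is killed by $\chi_z$ must have both Hodge–Pink weights equal.

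Finally, having reduced to $V\mid_{\cG_{L'}}=\epsilon_1\chi_z^{-n}\oplus\epsilon_2\chi_z^{-n}$ with $\epsilon_i$ unramified, I would enlarge $L'$ once more to a finite \emph{separable} extension over which both unramified characters $\epsilon_i$ become trivial (possible since unramified characters of $\cG_{L'}$ factor through $\cG_k$, a procyclic group, so a finite extension of the residue field kills them, and the corresponding unramified extension of $L'$ is separable). Over this final $L'$ we get $V\mid_{\cG_{L'}}\cong Q_\frp(n)\oplus Q_\frp(n)=Q_\frp(n)^{\oplus 2}$, as desired. I would close by remarking that every field extension used is separable, so the statement is proved with $L'/L$ finite separable; the only nontrivial content beyond bookkeeping is the equality $n_1=n_2$, which is where the two-dimensionality hypothesis is genuinely used.
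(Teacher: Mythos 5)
Your proposal has a genuine gap at the very first reduction step, and it is the gap that the entire lemma exists to fill. You assert that since $V$ is irreducible over $\cG_L$ with abelian Galois image, it decomposes as $\chi_1 \oplus \chi_2$ for characters $\chi_i\colon \cG_{L'} \to \overline{Q_\frp}^\times$ after passing to a finite separable $L'/L$, and you then treat these $\chi_i$ as one-dimensional objects of $\mathsf{Rep}_{Q_\frp}(\cG_{L'})^\crys$ in order to apply Proposition~\ref{prop.closed} and Lemma~\ref{lem.char}. This is not coherent. Schur's lemma plus abelianness of the image gives that $E:=\End_{Q_\frp[\cG_L]}(V)$ is a quadratic extension of $Q_\frp$, and $V$ is one-dimensional over $E$ with $\cG_L$ acting via a character $\rho\colon\cG_L\to E^\times$. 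The splitting $V\ot_{Q_\frp}\overline{Q_\frp}\cong\chi_1\oplus\chi_2$ then holds automatically over $\overline{Q_\frp}$ (the two $Q_\frp$-embeddings $E\hra\overline{Q_\frp}$ composed with $\rho$), with no need to enlarge $L$; but those $\chi_i$ are $\overline{Q_\frp}$-valued Galois-conjugate characters, not objects of $\mathsf{Rep}_{Q_\frp}(\cG_{L'})$, so neither Proposition~\ref{prop.closed} nor Lemma~\ref{lem.char} applies to them. What must actually be shown is that $\rho(\cG_{L'})\subset Q_\frp^\times$ for some finite separable $L'$; once that is known, $V\mid_{\cG_{L'}}\cong\omega^{\op 2}$ as $Q_\frp$-representations for a \emph{single} character $\omega$, and your worry about $n_1\ne n_2$ disappears since both summands are literally the same $\omega$.

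This $Q_\frp$-rationality of $\rho$ is the substance of the lemma, and it is not delivered by the determinant constraint and Hodge--Pink weight bound you invoke; a priori the local CM-type $(d_\psi)_{\psi\in\Gamma_E}$ of the associated local shtuka can have $d_\phi\ne d_{s\circ\phi}$, and nothing about weights alone forces $\rho$ into $Q_\frp^\times$. The paper's proof proceeds by a genuinely different route: the $E$-action on $V$ is promoted (via \cite[Proposition 4.2]{HS20}) to complex multiplication by $\cO_E$ on the local shtuka $\ul{\wh M}$; the CM theory of \cite[Proposition 5.11]{HS20} identifies $\rho\mid_{\cI_L}$ with $\prod_{\psi\in\Gamma_E}\psi^{-1}\circ\chi_{\wt E}^{-d_\psi}$; and then an explicit local class field theory computation --- using the norm compatibility of Lemma~\ref{lem.com}, the normality of $L/F_\frp$, and the fact that $\Gal(L_0/\wt E)$ is normal in $\Gal(L_0/F_\frp)$ --- shows that $N_{L_0/\wt E}(x_0)^{\,d_\phi-d_{s\circ\phi}}\in F_\frp^\times$, hence $\rho(x)\in Q_\frp^\times$ for $x\in U_L$. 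This norm argument is precisely the two-dimensional replacement for the $p$-adic logarithm device of \cite[Lemma 2.4]{Oze20} that you correctly flag as unavailable in positive characteristic; without it, the reduction to a sum of $Q_\frp(n)$'s remains unproved.
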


\begin{proof}
Let $\ul{\wh M}$ be a local shtuka over $R$ with $\wc V_\frp\ul{\wh M}=V$, so that $\rk \ul{\wh M}=2$.
Let  
$\rho\colon \cG_L\to \Aut_{Q_p}(\wc V_\frp\ul{\wh M})$ be  a continuous homomorphism describing the $\cG_L$-action on $\wc V_\frp\ul{\wh M}$.
Schur's lemma implies that $\End_{Q_\frp[\cG_L]}(\wc V_\frp\ul{\wh M})$ is a division algebra over $Q_\frp$.
Let $E$ be the maximal (commutative) subfield of $\End_{Q_\frp[\cG_L]}(\wc V_\frp\ul{\wh M})$.
Since $\rho$ factors through the abelian group $\Gal(L_\infty/L)$,  
we see that $\wc V_\frp\ul{\wh M}$ becomes an $E$-vector space and $\rho$ is regarded as a character $\rho\colon \cG_L\to E^\times$.
In particular, $E$ is a quadratic extension of $Q_\frp$. 
Since $E/Q_\frp$ is Galois by $p\neq 2$, the image $\psi E\subset L^\alg$ of $\psi\in \Gamma_E$ is independent of $\psi$.
We set $\wt E:=\psi E$, which is Galois over $F_\frp$.
Replacing $L$ by a finite separable extension, we may assume that 
$L/F_\frp$ is a normal extension and $\wt E\subset L$.

Since the functor $V_\frp\colon \mathsf{QLS}(R)\to \mathsf{Rep}_{Q_\frp}(\cG_L)^\crys$ is a category equivalence, 
the $E$-action on $\wc V_\frp\ul{\wh M}$ induces an inclusion $E\subset \QEnd_{R}(\ul{\wh M})$.
By \cite[Proposition 4.2]{HS20}, we may assume that $\cO_E\subset \End_R(\ul{\wh M})$.
Hence $\ul{\wh M}$ has \textit{complex multiplication by $\cO_E$} in the sense of \cite[Definition 4.3]{HS20}.
For the \textit{local CM-type} $\{d_\psi\in \bZ \colon \psi \in \Gamma_E\}$ of $\ul{\wh M}$ ; see \cite[Definition 4.11]{HS20}, 
we define $\wt \rho\colon \cG_{\wt E}\to E^\times$ by 
\[
\wt \rho=\prod_{\psi\in\Gamma_E}\psi^{-1}\circ \chi_{\wt E}^{-d_\psi}.
\]
It follows by \cite[Proposition 5.11]{HS20} that $\rho|_{\cI_L}=\wt \rho|_{\cI_L}$.
Now $\rho$ factors through $\Gal(L_\infty/L)$ and the image of the restriction map $\cI_L\to \Gal(L_\infty/L)$ is of finite index in $\Gal(L_\infty/L)$.  
Replacing $L$ by its suitable finite separable extension, 
we have $\rho=\wt \rho|_{\cG_L}$. 
Then $\wt \rho$ is trivial on $\cG_{\wt E}\cap \cG_{L_\infty}=\cG_{\wt E_\infty}$ and hence it factors through $\Gal(\wt E_\infty/\wt E)$.
By the fact that the restriction map $\Gal(\wt E_\infty/\wt E)\to \Gal(F_{\frp,\infty}/F_\frp)$ is injective, 
 $\Gal(\wt E_\infty/\wt E)$ is regarded as a subgroup of $\Gal(F_{\frp,\infty}/F_\frp)$ whose index is two.
Thus one can take an element $\alpha\in \Gal(F_{\frp,\infty}/F_\frp)$ such that $\Gal(F_{\frp,\infty}/F_\frp)=\Gal(\wt E_\infty/\wt E)\amalg \alpha\Gal(\wt E_\infty/\wt E)$ and $\alpha^2\in \Gal(\wt E_\infty/\wt E)$.
We choose an element $\lambda\in \wt E^\sep$ such that $\lambda^2=\wt \rho(\alpha^2)$.
Define $\wt\rho_{F_\frp}\colon \Gal(F_{\frp,\infty}/F_\frp)\to \wt E(\lambda)^\times$  by $\wt\rho_{F_\frp}(\alpha\beta)=\lam\wt\rho(\beta)$ and $\wt\rho_{F_\frp}(\beta)=\wt\rho(\beta)$ for each $\beta\in \Gal(\wt E_\infty/\wt E)$.
It is a group homomorphism and extends to $\wt \rho_{F_\frp}\colon \cI_{F_\frp}\to \wt E(\lam)^\times$ by the surjective map $\cI_{F_\frp}\to \Gal(F_{\frp,\infty}/F_\frp) ; g\mapsto g|_{F_{\frp,\infty}}$.
By construction, $\wt \rho(g)=\wt\rho_{F_\frp}(g)$ for each $g\in \cI_{\wt E}$.
Since $\rho|_{\cI_L}$, $\wt \rho|_{\cI_{\wt E}}$, and $\wt\rho_{F_\frp}$ factor through $\cI_L^\ab, \cI_{\wt E}^\ab$, and $\cI_{F_\frp}^\ab$, 
 we see by Lemma \ref{lem.com} that the diagram
 \[
\xymatrix@C=70pt@R=15pt{
U_L \ar[r]_-{\cong}^{\SSC (-, L^\ab/L)}\ar[d]_{N_{L/{\wt E}}}& \cI_{L}^\ab \ar[d]^{\mathrm{res}}\ar[r]^{\rho} & E^\times \ar@{=}[d] \\
U_{{\wt E}} \ar[r]_-{\cong}^{\SSC (-, {\wt E}^\ab/{\wt E})} \ar[d]_{N_{{\wt E}/F_\frp}}& \cI_{{\wt E}}^\ab\ar[d]^{\mathrm{res}} \ar[r]^{\wt \rho}& E^\times \ar@{^{(}-_>}[d]\\
U_{F_\frp}\ar[r]_-{\cong}^{\SSC (-, {F_\frp^\ab}/F_\frp)} & \cI_{F_\frp}^\ab \ar[r]^{\wt\rho_{F_\frp}}& E(\lam)^\times
}
\]
is commutative.
Regarding $\rho|_{\cI_L}$ as a character $U_L\to E^\times$ via the local Artin map, we obtain
\begin{equation}\label{eq.G1}
\rho(h(x))=\rho(x)\es\es\text{for each}\ h\in \Aut_{F_\frp}(L)\ \text{and}\ x\in U_L
\end{equation}
because $\rho|_{\cI_L}$ factors through the norm map $N_{L/F}\colon U_L\to U_{F_\frp}$.

Take an element $x\in U_L$.
We have assumed that $L/F_\frp$ is normal, so that the separable closure $L_0$ of $F_\frp$ in $L$ is Galois over $F_\frp$ and $\wt E\subset L_0$.
Since $L$ is purely inseparable over $L_0$,  we are allowed to identity $\Aut_{F_\frp}(L)=\Gal(L_0/F_\frp)$ via $h\mapsto h|_{L_0}$.
We fix a $\phi\in\Gamma_E$ and put $\Gal(\wt E/F_\frp)=\{\id, s\}$.
Then $\Gamma_E=\{\phi, s\circ\phi\}$.
Choosing an $\wh s\in \Gal(L_0/F_\frp)$ such that $\wh s|_{\wt E}=s$, we obtain 
$\Gal(L_0/F_\frp)=\Gal(L_0/\wt E)\amalg \wh s^{-1}\Gal(L_0/\wt E)$.
Put $x_0:=N_{L/L_0}(x)=x^{p^m} \in U_{L_0}$, where $p^m=[L:L_0]$.
Then for $N(x):=\prod_{\psi\in\Gamma_E}\psi^{-1}\circ N_{L/\wt E}(x)\in E^\times$, we have 
\begin{align*}
N(x)&=(\phi^{-1}\circ N_{L_0/\wt E}(x_0)) \cdot (\phi^{-1}\circ {s}^{-1}\circ N_{L_0/\wt E}(x_0))\\
&=\phi^{-1}\left(
\prod_{g\in \Gal(L_0/\wt E)}g(x_0)\cdot \prod_{g\in \Gal(L_0/\wt E)} \wh{s}^{-1}\circ g(x_0)
\right)\\
&=\phi^{-1}\left(
\prod_{h\in \Gal(L_0/F_\frp)}h(x_0)
\right)\\
&=\phi^{-1}\circ N_{L/F_\frp}(x).
\end{align*}
This implies that $N(x)\in Q_\frp^\times$ and 
\begin{equation}\label{eq.G2}
N(h(x))=N(x) \es \text{for each}\es h\in \Aut_{F_\frp}(L).
\end{equation}
Setting $d:=d_{\phi}-d_{s\circ\phi}$, for the character $\rho\colon U_L\to E^\times$, we have
\begin{align*}
\rho(x)=\prod_{\psi\in \Gamma_E}\psi^{-1}\circ N_{L/\wt E}(x^{-1})^{-d_\psi}&=\prod_{\psi\in\Gamma_E}\psi^{-1}\circ N_{L_0/\wt E}(x_0)^{d_\psi}\\
&=(\phi^{-1}\circ N_{L_0/\wt E}(x_0)^{d_\phi})\cdot (\phi^{-1}\circ s^{-1}\circ N_{L_0/\wt E}(x_0)^{d_{s\circ \phi}})\\
&=(\phi^{-1}\circ N_{L_0/\wt E}(x_0)^d)\cdot N(x)^{d_{s\circ \phi}}.
\end{align*} 
For each $h\in \Gal(L_0/F_\frp)$, we have $N_{L_0/\wt E}(x_0)^d=N_{L_0/\wt E}(h(x_0))^d$
by (\ref{eq.G1}) and  (\ref{eq.G2}).
Since $\Gal(L_0/\wt E)$ is a normal subgroup of $\Gal(L_0/F_\frp)$, 
we also have  
\[
N_{L_0/\wt E}(x_0)^d=h\circ N_{L_0/\wt E}(x_0)^d.
\]
This implies that $N_{L_0/\wt E}(x_0)^d\in F_\frp^\times$ and so $\rho(x)\in Q_\frp^\times$.

Consequently, the character $\rho\colon \cG_L\to E^\times$ values in $Q^\times_\frp$ on the image of the restriction map $\cI_L\to \Gal(L_\infty/L)$, which is of finite index in $\Gal(L_\infty/L)$.
Replacing $L$ by a finite separable extension, we have
$\rho(\cG_L)\in Q_\frp^\times$.
This means that  $\rho=\omega^{\op 2}$ for some crystalline character $\omega\colon \cG_L\to Q^\times_\frp$.
By Lemma \ref{lem.char}, we have $\omega=\epsilon\cdot \chi_z^n$ for some $n\in \bZ$ and $\epsilon\colon\cG_L\to Q_\frp^\times$ satisfying $\epsilon(\cI_L)=1$.
Hence 
we have $\omega|_{\cG_{L'}}=\chi_z^n$ for some finite separable extension $L'$ of $L$.
The lemma is proved.
\end{proof}

\subsection{Proof of results}

Let $\frl$ be a prime of $A$. 
We begin with the following abstract lemma. 

\begin{lemma}[{cf.\ \cite[Lemma 2.1]{KT13}}]\label{lemma.fix}
Let $V$ be a finite-dimensional $Q_\frl$-representation of a group $\cG$ and $T$ a $\cG$-stable $A_\frl$-lattice in $V$.
Then the following assertions are equivalent.
\begin{itemize}
\item[(1)] $V^\cG\neq 0$. 
\item[(2)] $(V/T)^\cG$ is infinite.
\end{itemize}
\end{lemma}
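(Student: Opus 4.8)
The statement is a purely algebraic equivalence about a finite-dimensional $Q_\frl$-representation $V$ of an abstract group $\cG$ together with a $\cG$-stable $A_\frl$-lattice $T$, where $A_\frl$ is a complete DVR with fraction field $Q_\frl$ and (finite) residue field $\bF_\frl$. The plan is to prove the two implications separately, passing through the quotients $T/\frl^n T$ and their $\cG$-invariants.

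First I would prove $(1)\Rightarrow(2)$. Suppose $V^\cG\neq 0$; pick $0\neq v\in V^\cG$. After scaling $v$ by a power of a uniformizer of $A_\frl$ we may assume $v\in T\setminus \frl T$, so that $v$ is a primitive vector of $T$ fixed by $\cG$. Then $A_\frl\cdot v\subset T$ is a $\cG$-stable free rank-one direct summand, and the image of $Q_\frl v/A_\frl v$ in $V/T$ is a $\cG$-fixed subgroup isomorphic to $Q_\frl/A_\frl$, which is infinite because $A_\frl$ is not a field (its residue field is finite but nontrivial). Hence $(V/T)^\cG$ is infinite.

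Next I would prove $(2)\Rightarrow(1)$, equivalently its contrapositive $\neg(1)\Rightarrow\neg(2)$: if $V^\cG=0$ then $(V/T)^\cG$ is finite. The key point is that $V/T=\varinjlim_n \frl^{-n}T/T\cong\varinjlim_n T/\frl^n T$ (multiplication by $\frl^n$), and taking $\cG$-invariants is left exact but does not commute with this direct limit in general; however, every element of $(V/T)^\cG$ is killed by some power of $\frl$, so $(V/T)^\cG=\bigcup_n (V/T)[\frl^n]^\cG$. Each $(V/T)[\frl^n]\cong T/\frl^n T$ is a finite group (here finiteness of $\bF_\frl$ is used), so it suffices to bound the orders of the finite groups $(T/\frl^n T)^\cG$ uniformly in $n$. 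For this I would argue as follows. From $V^\cG=0$ and left-exactness, $T^\cG=0$. Consider the exact sequence $0\to T\xrightarrow{\frl^n} T\to T/\frl^n T\to 0$ of $A_\frl[\cG]$-modules; taking $\cG$-invariants gives an injection $(T/\frl^n T)^\cG\hookrightarrow H^1(\cG,T)[\frl^n]$ where the connecting map lands in the $\frl^n$-torsion of the (set-theoretic, no continuity assumed) cohomology $H^1(\cG,T)=\varprojlim_m H^1(\cG,T/\frl^m T)$. The module $H^1(\cG,T)$ is $A_\frl$-finitely generated? — not obviously, since $\cG$ is abstract. So instead I would use the cleaner route: $(T/\frl^n T)^\cG$ injects into $(T/\frl^{n+1}T)^\cG$ via multiplication by $\frl$ (using $T^\cG=0$ to see injectivity), giving an increasing chain of finite subgroups of $(V/T)^\cG$; it then suffices to show this chain stabilizes. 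Equivalently, letting $T':=$ the $A_\frl$-submodule of $V$ generated by $T$ and all $\cG$-fixed vectors of $\frl^{-n}T$ for all $n$, one shows $T'$ is still a lattice (finitely generated over $A_\frl$) using $V^\cG=0$: the $\cG$-fixed part of $V$ being $0$ forces these fixed vectors to lie in a bounded fractional lattice $\frl^{-N}T$ for some $N$, whence $(V/T)^\cG\subset \frl^{-N}T/T$ is finite.

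The last sentence hides the real content, so let me isolate it: **the main obstacle is showing the $\cG$-fixed vectors in $V/T$ are killed by a uniformly bounded power of $\frl$.** Concretely: if $\bar v\in (\frl^{-n}T/T)^\cG$, lift to $w\in \frl^{-n}T$; then $(g-1)w\in T$ for all $g$, i.e. $w$ defines a cocycle $\cG\to T$. The cleanest self-contained argument: the $A_\frl[\cG]$-module $W:=\sum_n (\frl^{-n}T)^{\cG}$ (sum inside $V$) is a $\cG$-stable $A_\frl$-submodule of $V$ on which $\cG$ acts trivially, hence $W\subset V^\cG=0$... but that only shows the fixed points of $\frl^{-n}T$ itself are $0$, which is just $T^\cG=0$ again — the elements of $(V/T)^\cG$ need not lift to fixed vectors. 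So the honest argument must compare $(T/\frl^n T)^\cG$ across $n$ via the maps $T/\frl^n T\xrightarrow{\cdot\frl}T/\frl^{n+1}T$, prove these are injective on $\cG$-invariants (clear: kernel is $(T/\frl T)$-image of $T^\cG=0$), and then prove the union is finite by exhibiting $\varinjlim_n (T/\frl^n T)^\cG=(V/T)^\cG$ as a subquotient of a finitely generated $A_\frl$-module. I expect the paper accomplishes this by a short direct computation with lattices; I would follow \cite[Lemma 2.1]{KT13} and phrase it so: set $T_n:=\{v\in V : (g-1)v\in T\text{ and } \frl^n v\in T\ \forall g\in\cG\}$; each $T_n$ is a $\cG$-stable $A_\frl$-module with $T\subset T_n\subset\frl^{-n}T$ and $(V/T)^\cG=\varinjlim_n T_n/T$; the chain $T_1\subset T_2\subset\cdots$ of lattices sits inside $V$, and its union $T_\infty$ is $\cG$-stable with $\frl\otimes Q_\frl$-span either all of $V$ or a proper subspace — if $T_\infty\otimes Q_\frl=V$ then $T_\infty$ would contribute $\cG$-fixed lines to $V$ after a finiteness argument contradicting $V^\cG=0$, forcing $T_\infty$ to be finitely generated and hence the chain to stabilize, making $(V/T)^\cG$ finite.
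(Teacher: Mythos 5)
Your proof of $(1)\Rightarrow(2)$ is correct and coincides with the paper's: a nonzero $\cG$-fixed $v$ (scaled to be primitive in $T$) yields the infinite $\cG$-fixed subset $\{y^{-n}v+T\}$ of $V/T$, where $y$ is a uniformizer.

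For $(2)\Rightarrow(1)$, however, your argument has a genuine gap. You correctly reduce to bounding the chain $(T/\frl^nT)^\cG$ (equivalently, the chain $T_n$), and you correctly identify this as ``the main obstacle.'' But your final paragraph does not resolve it: the dichotomy ``$T_\infty\otimes Q_\frl=V$ or a proper subspace'' is vacuous, since $T\subset T_\infty$ forces $T_\infty\otimes Q_\frl=V$ always, and the assertion ``$T_\infty$ would contribute $\cG$-fixed lines to $V$ after a finiteness argument'' is left unsupported --- this is precisely the content you still need to supply, and it is not a formal consequence of $V^\cG=0$. What is actually needed is the compactness of $A_\frl$, i.e.\ the finiteness of $\bF_\frl$, in an essential way, and this is where the paper's argument differs from what you sketch. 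The paper proves $(2)\Rightarrow(1)$ directly rather than by contrapositive: if $(V/T)^\cG$ is infinite, then since $(V/T)^\cG=\bigcup_n(y^{-n}T/T)^\cG$ with each term finite, the chain is strictly increasing at every step (once it stabilizes it stabilizes forever, as you yourself note), so the finite sets $S_n:=(T/y^nT)^\cG\setminus(yT/y^nT)^\cG$ are all nonempty; they form a projective system under the reduction maps, and a projective limit of nonempty finite sets is nonempty, producing an element of $T^\cG\setminus yT^\cG$ and hence $V^\cG\neq 0$. Note that your contrapositive formulation ``$V^\cG=0\Rightarrow(V/T)^\cG$ finite'' is genuinely \emph{false} if $\bF_\frl$ is infinite (take $A_\frl=k[[y]]$ with $k$ infinite, $V=Q_\frl^2$, $T=A_\frl^2$, and $\cG=\bZ$ acting by $\mathrm{diag}(1+y,1+y^2)$: then $V^\cG=0$ but $(V/T)^\cG\cong y^{-1}A_\frl/A_\frl\oplus y^{-2}A_\frl/A_\frl$ is infinite), so any correct proof must invoke this finiteness --- for instance via the compactness argument above --- and your sketch never does.
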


\begin{proof}
Let $y\in A_\frl$ be a uniformizing parameter.
If there is a  non-zero $v\in V^{\cG}$, then it gives an infinite subset $\{y^{-n}v+T : n\in \bZ_{>0}\}$ of $(V/T)^{\cG}$.
Conversely, we 
suppose that $(V/T)^{\cG}$ is infinite. 
Since $V=\bigcup_{n=0}^\infty y^{-n}T$, we have $(V/T)^{\cG}=\bigcup_{n=0}^\infty \left(y^{-n}T/T\right)^{\cG}$.
It follows that $(y^{-(n-1)}T/T)^{\cG}\subsetneq (y^{-n}T/T)^{\cG}$ for any $n\geq 1$, so that $(yT/y^{n}T)^{\cG}\subsetneq (T/y^nT)^{\cG}$.
Since all $(T/y^nT)^{\cG}\setminus (yT/y^nT)^{\cG}$ are non-empty finite sets, we see that 
\[
T^{\cG}\setminus yT^{\cG}=\varprojlim_n (T/y^nT)^{\cG}\setminus (yT/y^nT)^{\cG}
\] is also non-empty, so $V^{\cG}\neq 0$.
\end{proof}

For all $\frl$ distinct from $\frp$, we have the following.

\begin{proposition}\label{proposition.frl}
Let $\ul{M}$ be an $A$-motive over $L$ with good reduction.
Let $L'$ be a Galois extension of $L$ such that $L'\cap L^\ur$ is finite over $L$.
If $\ul{M}$ has only non-zero weights, then $\prod_{\frl\neq \frp}(\wc V_\frl\ul{M}/\wc T_\frl\ul{M})^{\cG_{L'}}$ is finite.
\end{proposition}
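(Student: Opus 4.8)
The plan is to reduce the finiteness of $\prod_{\frl\neq\frp}(\wc V_\frl\ul M/\wc T_\frl\ul M)^{\cG_{L'}}$ to a statement about eigenvalues of Frobenius. By Lemma~\ref{lemma.fix}, finiteness of $(\wc V_\frl\ul M/\wc T_\frl\ul M)^{\cG_{L'}}$ is equivalent to $(\wc V_\frl\ul M)^{\cG_{L'}}=0$, so it suffices to show that $\wc V_\frl\ul M$ has no nonzero $\cG_{L'}$-fixed vector for every $\frl\neq\frp$, and moreover that $(\wc V_\frl\ul M/\wc T_\frl\ul M)^{\cG_{L'}}=0$ for all but finitely many $\frl$ (the latter is what makes the product, not just each factor, finite). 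First I would use the N\'eron--Ogg--Shafarevich-type criterion quoted before Lemma~\ref{lemma.Frob}: since $\ul M$ has good reduction and $\frl\neq\frp$, the representation $\wc V_\frl\ul M$ is unramified, so $\cI_L$ acts trivially and $\cG_L$ acts through $\cG_k$ via the geometric Frobenius $\Frob_k$, with $\rho_{\ul M,\frl}(\Frob_k)$ having characteristic polynomial $P(X;\ul M)$ independent of $\frl$ (Lemma~\ref{lemma.Frob}).

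Next I would analyze the action of $\cG_{L'}$. Because $L'/L$ is Galois with $L'\cap L^\ur$ finite over $L$, the image of $\cG_{L'}$ in $\cG_k$ is an open (finite-index) subgroup, hence is topologically generated by $\Frob_k^{\,m}$ for some $m\geq 1$ (the residue degree of $L'\cap L^\ur$ over $L$). Therefore $(\wc V_\frl\ul M)^{\cG_{L'}}$ is the fixed space of $\rho_{\ul M,\frl}(\Frob_k)^{\,m}$, i.e.\ the generalized eigenspace of $\rho_{\ul M,\frl}(\Frob_k^{\,m})$ for the eigenvalue $1$, which is nonzero if and only if some eigenvalue $\alpha_i$ of $\Frob_k$ satisfies $\alpha_i^{\,m}=1$. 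But the weights hypothesis says $|\alpha_i|=(\#k)^{w_i}$ with $w_i\neq 0$, so $|\alpha_i|\neq 1$; hence $\alpha_i^{\,m}\neq 1$ for every $m\geq 1$ and every $i$. This rules out the eigenvalue $1$, giving $(\wc V_\frl\ul M)^{\cG_{L'}}=0$, hence finiteness of each factor.

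For the product over all $\frl\neq\frp$ I would argue that $(\wc V_\frl\ul M/\wc T_\frl\ul M)^{\cG_{L'}}=0$ for almost all $\frl$. Fix a model and let $\alpha_1,\dots,\alpha_r\in Q^\alg$ be the Frobenius eigenvalues; the nonzero element $\prod_i(\alpha_i^{\,m}-1)$ lies in $Q^\alg$ and is integral away from finitely many primes, so for all $\frl$ outside a finite set $S$ the reduction $\det(\rho_{\ul M,\frl}(\Frob_k^{\,m})-\id)$ is an $\frl$-adic unit; then already $(\wc T_\frl\ul M/\frl\wc T_\frl\ul M)^{\cG_{L'}}=0$, which forces $(\wc V_\frl\ul M/\wc T_\frl\ul M)^{\cG_{L'}}=0$ by the limit argument in Lemma~\ref{lemma.fix}. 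For $\frl\in S\setminus\{\frp\}$ each factor is finite by the previous paragraph, and a finite product of finite groups is finite, so the conclusion follows.

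\textbf{Main obstacle.} The substantive point is the reduction, for a group $\cG_{L'}$ whose image in $\cG_k$ is merely a finite-index closed subgroup, to a single power $\Frob_k^{\,m}$ and to a clean "eigenvalue $=1$" criterion; once that is set up, the weights hypothesis $w_i\neq 0$ does the rest immediately via the archimedean absolute value $|\cdot|$. The only mild care is the "almost all $\frl$" uniformity needed to conclude finiteness of the infinite product rather than of each factor; this is handled by the integrality of $\prod_i(\alpha_i^{\,m}-1)$ and does not require good reduction beyond what is already assumed.
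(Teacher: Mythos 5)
Your proposal is correct and runs along essentially the same lines as the paper's proof: both reduce to an eigenvalue-$1$ criterion for Frobenius via the N\'eron--Ogg--Shafarevich criterion and Lemma~\ref{lemma.Frob}, use the non-integral (here, non-zero) weight hypothesis and the absolute value $|\cdot|$ to rule out a unit-norm eigenvalue, and obtain the finiteness of the infinite product by observing that the relevant determinant, a non-zero element of $Q$ integral outside $\frp$, is an $\frl$-adic unit for almost all $\frl$. The only cosmetic difference is that the paper first replaces $L$ by the finite extension $L'\cap L^\ur$, so that $L'/L$ becomes totally ramified and $\cG_{L'}$ surjects onto $\cG_k$, whence the fixed space has trivial $\Frob_k$-action and one can work directly with $P(1;\ul M)$; you instead keep $L$ and work with $\Frob_k^m$ and $\prod_i(\alpha_i^m-1)$, which is equivalent. (One tiny slip: the $\cG_{L'}$-fixed subspace is the honest $1$-eigenspace of $\rho(\Frob_k^m)$, not the generalized one, but your conclusion ``nonzero iff some $\alpha_i^m=1$'' is unaffected.)
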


\begin{proof} 
Extending $L$ by a finite extension,
we may assume that $L'/L$ is  totally ramified and so the restriction map $\cI_L\ra \Gal({L'}/L)$ is surjective.
Since $\wc V_\frl\ul{M}$ is unramified for all $\frl\neq \frp$, the $\cG_{L}$-action on $(\wc V_\frl\ul{M})^{\cG_{L'}}$ is trivial.
This means that, if $(\wc V_\frl\ul{M})^{\cG_{L'}}\neq 0$, then $1\in Q$ is an eigenvalue of $\rho_{\ul{M},\frl}(\Frob_k)$; however, this 
contradicts that all weights of $\ul{M}$ are non-zero.
Thus each $(\wc V_\frl\ul{M}/\wc T_\frl\ul{M})^{\cG_{L'}}$ is finite by Lemma \ref{lemma.fix}.
If we show $(\wc V_\frl\ul{M}/\wc T_\frl\ul{M})^{\cG_{L'}}=0$ for almost all $\frl$, we get  the conclusion.
To prove this, it is enough to check that $(\wc T_\frl\ul{M}/\frl \wc T_\frl\ul{M})^{\cG_{L'}}=0$.
We denote by $\bar{\rho}_{\ul{M},\frl}\colon \cG_L \ra \Aut_{\bF_\frl} \left(\wc T_\frl\ul{M}/\frl \wc T_\frl\ul{M}\right)$ the corresponding $\bF_\frl$-representation.
It follows by Lemma \ref{lemma.Frob} that 
\[
P(X; \ul{M}) \equiv \det(X-\bar{\rho}_{\ul{M},\frl}(\Frob_k)) \pmod \frl
\]
for any $\frl\neq \frp$.
By $P(X; \ul{M}) \in Q[X]\cap A_\frl[X]$, we have $P(1;\ul{M})\in Q$ and $\ord_\frl P(1;\ul{M}) \geq 0$.
Since $\ul{M}$ has only non-zero weights,  we see that $P(1; \ul{M})\neq 0$. 
Thus for all but finitely many primes $\frl$, we have $P(1; \ul{M}) \not\equiv 0\pmod \frl$.
This implies that for almost all $\frl$, 
all eigenvalues of $\bar{\rho}_{\ul{M},\frl}(\Frob_k)$ are distinct from $1$, so that $(\wc T_\frl\ul{M}/\frl \wc T_\frl\ul{M})^{\cG_{L'}}=0$.
\end{proof}

The following is our main result.

\begin{theorem}\label{theorem.main}
Let $\ul{M}$ be an
$A$-motive over $L$ of rank two with good reduction and 
$z\in Q$ a uniformizing parameter at $\frp$ satisfying  Condition \ref{condition.main}.
If $p\neq 2$ and $\ul{M}$ has no integral weights, then $(\wc V\ul{M}/\wc T\ul{M})^{\cG_{L_\infty}}$ is finite.
\end{theorem}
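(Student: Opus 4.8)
The plan is to carry out the two-step reduction sketched right after Theorem~\ref{thm.intro}. Since $\wc V\ul M/\wc T\ul M=\prod_{\frl}(\wc V_\frl\ul M/\wc T_\frl\ul M)$ with $\cG_{L_\infty}$ acting diagonally, and since by Lemma~\ref{lemma.fix} the $\frl$-component of the invariants is finite exactly when $(\wc V_\frl\ul M)^{\cG_{L_\infty}}=0$, it is enough to prove
\[
\text{(1)}\quad \prod_{\frl\neq\frp}\bigl(\wc V_\frl\ul M/\wc T_\frl\ul M\bigr)^{\cG_{L_\infty}}\ \text{is finite},\qquad
\text{(2)}\quad \bigl(\wc V_\frp\ul M\bigr)^{\cG_{L_\infty}}=0.
\]
For~(1) I would invoke Proposition~\ref{proposition.frl} with $L'=L_\infty$: the extension $L_\infty/L$ is abelian, and as $F_{\frp,\infty}/F_\frp$ is a totally ramified Lubin--Tate tower the compositum $L_\infty/L$ is totally ramified, so $L_\infty\cap L^\ur=L$; moreover $\ul M$ has only non-zero weights because it has no integral ones. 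So the real content is~(2).

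For~(2) I would argue by contradiction: assume $V:=(\wc V_\frp\ul M)^{\cG_{L_\infty}}\neq 0$. Since $\ul M$ has good reduction, $\wc V_\frp\ul M$ is crystalline (Example~\ref{ex.motiveshtuka}), hence so is its $\cG_L$-stable subspace $V$ (Proposition~\ref{prop.closed}); also $1\leq\dim V\leq 2$, and the $\cG_L$-action on $V$ factors through the abelian quotient $\Gal(L_\infty/L)$. I would then distinguish cases. If $\dim V=1$, Lemma~\ref{lem.char} writes the action on $V$ as $\epsilon\cdot\chi_z^{-n}$ with $n\in\bZ$ and $\epsilon$ unramified; since $L_\infty\supseteq F_{\frp,\infty}$ the character $\chi_z$ is trivial on $\cG_{L_\infty}$, so triviality of the action on $V$ forces $\epsilon|_{\cG_{L_\infty}}=1$, and because $L_\infty/L$ is totally ramified $\cG_{L_\infty}$ surjects onto $\cG_k$, through which $\epsilon$ factors; hence $\epsilon=1$ and $V\cong Q_\frp(-n)$. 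If $\dim V=2$ and $V$ is reducible, the same argument applied to a one-dimensional $\cG_L$-stable subspace $W\subseteq V$ (crystalline by Proposition~\ref{prop.closed}) gives $W\cong Q_\frp(-m)$. If $\dim V=2$ and $V$ is irreducible, Lemma~\ref{lemma.key}---this is exactly where $p\neq 2$ and the rank-two hypothesis enter---provides a finite separable extension $L'/L$ with $V|_{\cG_{L'}}\cong Q_\frp(n)^{\oplus 2}$. In all cases, after replacing $L$ by a finite separable extension if necessary, $\wc V_\frp\ul M$ contains the Tate twist $Q_\frp(m)$ as a $\cG_L$-subrepresentation for some $m\in\bZ$.

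To finish, recall that a finite separable extension of $L$ alters neither the weights of $\ul M$ nor Condition~\ref{condition.main} (passing to the extension only raises the relevant Frobenius eigenvalues to a fixed power while $\#k$ is raised to the same power). Since $D_\crys$ is exact and, by Proposition~\ref{proposition.weight}, matches characteristic polynomials, the inclusion $Q_\frp(m)\hookrightarrow\wc V_\frp\ul M$ forces the weight of $Q_\frp(m)$ to be one of the weights $w_1,\dots,w_r$ of $\ul M$; by Lemma~\ref{lemma.intweight} this weight equals
\[
-m\Bigl(1+\tfrac{1}{\deg\frp}\sum_{\frl\neq\frp}\deg\frl\,\ord_\frl(z)\Bigr),
\]
which is an integer by Condition~\ref{condition.main}. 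This contradicts the assumption that $\ul M$ has no integral weights, so $(\wc V_\frp\ul M)^{\cG_{L_\infty}}=0$; together with~(1) this proves the theorem.

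The step I expect to be the main obstacle is the two-dimensional irreducible case, which depends on the Lubin--Tate and complex-multiplication input of Lemma~\ref{lemma.key}; it is this lemma that confines the result to rank two. With Lemmas~\ref{lem.char}, \ref{lemma.key} and Propositions~\ref{proposition.frl}, \ref{proposition.weight} available, the remaining work---the reduction to (1) and (2), the one-dimensional analysis, and the weight bookkeeping---is essentially routine.
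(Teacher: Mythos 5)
Your proposal is correct and follows essentially the same route as the paper: reduce to the primes $\frl\neq\frp$ via Proposition~\ref{proposition.frl}, and handle $\frp$ by showing that any nonzero $\cG_{L_\infty}$-invariant subspace of $\wc V_\frp\ul M$ would, after a finite separable extension of $L$, contain a Tate twist $Q_\frp(n)$ (Lemmas~\ref{lem.char} and \ref{lemma.key}), whose weight is an integer by Condition~\ref{condition.main} and Lemma~\ref{lemma.intweight}, contradicting the hypothesis. The only slight imprecision is your claim that $L_\infty/L$ is totally ramified --- in general one only knows that $L_\infty\cap L^\ur$ is finite over $L$ (the image of $\cI_L$ in $\Gal(L_\infty/L)$ has finite index), but that is exactly the hypothesis Proposition~\ref{proposition.frl} requires, so nothing breaks.
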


\begin{proof}
%We may reduce the case where $L_\infty/L$ is totally ramified.
By Proposition \ref{proposition.frl}, it remains to check that $(\wc V_\frp\ul{M})^{\cG_{L_\infty}}=0$. 
 We assume that $(\wc V_\frp\ul{M})^{\cG_{L_\infty}}\neq 0$ 
and take a non-zero irreducible $Q_\frp$-subrepresentation $V \subset (\wc V_\frp\ul{\wh M})^{\cG_{L_\infty}}$, which is crystalline by Proposition \ref{prop.closed}.
Since $V$ is of dimension $\leq 2$, Lemmas \ref{lem.char} and \ref{lemma.key} imply that, after extending $L$ by a finite extension, 
there is an integer $n\in \bZ$ such that $Q_\frp(n)\subset V$.
Consequently, $P(X; \ul{M})$ is divisible by $P(X; Q_\frp(n))$ and hence at least one of the weights of $\ul{M}$ is an integer by Lemma \ref{lemma.intweight}.
But this is a contradiction.
\end{proof}

\begin{corollary}\label{corollary.Amod}
Let $\ul{G}$ be an abelian Anderson $A$-module of rank two over $L$ with good reduction and $z\in Q$ a uniformizing parameter at $\frp$ satisfying  Condition \ref{condition.main}.
If $p\neq 2$ and $\ul{M}(\ul{G})$ has no integral weights, then 
$\ul{G}(L_\infty)_{\tor}$ is finite.
In particular, if $p\neq 2$ and $\ul{G}$ is a Drinfeld $A$-module of rank two with good reduction, then $\ul G(L_\infty)_\tor$ is finite. 
\end{corollary}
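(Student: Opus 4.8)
The plan is to deduce the corollary from Theorem~\ref{theorem.main} via the fully faithful functor $\ul G\mapsto\ul M(\ul G)$ together with the comparison, recalled in \S\ref{subsec.Defi}, between the torsion points of $\ul G$ and the $\frl$-adic realizations of $\ul M(\ul G)$. First I would check that the hypotheses transfer: since $\ul G$ has rank two and good reduction, its associated $A$-motive $\ul M(\ul G)$ is an effective $A$-motive over $L$ of rank two with good reduction --- good reduction of $\ul G$ being \emph{defined} to mean good reduction of $\ul M(\ul G)$. Together with $p\neq 2$, Condition~\ref{condition.main} on $z$, and the standing assumption that $\ul M(\ul G)$ has no integral weights, Theorem~\ref{theorem.main} then applies to $\ul M(\ul G)$ and gives that $(\wc V\ul M(\ul G)/\wc T\ul M(\ul G))^{\cG_{L_\infty}}$ is finite.

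Next I would translate this back into a statement about torsion points. For every prime $\frl$ of $A$ there is, as recalled in \S\ref{subsec.Defi}, a $\cG_L$-equivariant isomorphism $\wc V_\frl\ul M(\ul G)/\wc T_\frl\ul M(\ul G)\cong\ul G[\frl^\infty](L^\sep)$; passing to $\cG_{L_\infty}$-invariants and running over all $\frl$ shows that finiteness of $\ul G(L_\infty)_\tor$ is equivalent to finiteness of $\prod_{\frl}(\wc V_\frl\ul M(\ul G)/\wc T_\frl\ul M(\ul G))^{\cG_{L_\infty}}=(\wc V\ul M(\ul G)/\wc T\ul M(\ul G))^{\cG_{L_\infty}}$ --- exactly the equivalence already noted there. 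Combining this with the previous paragraph yields that $\ul G(L_\infty)_\tor$ is finite, which is the first assertion.

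For the ``in particular'' statement I would only need to check that a rank-two Drinfeld $A$-module $\ul G$ with good reduction automatically satisfies the weight hypothesis. By the weight computation for Drinfeld $A$-modules with good reduction recalled earlier, a rank-$r$ such module has $\ul M(\ul G)$ of weights $\tfrac1r,\dots,\tfrac1r$; for $r=2$ these equal $\tfrac12,\tfrac12\notin\bZ$, so $\ul M(\ul G)$ has no integral weights and the first part of the corollary applies verbatim. (The hypothesis $p\neq 2$ cannot be dropped here, since it is used inside the proof of Theorem~\ref{theorem.main}.)

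I do not expect a serious obstacle, as the corollary is a formal consequence of Theorem~\ref{theorem.main}; the only points demanding care are bookkeeping ones --- matching the ``good reduction'' hypotheses across the functor $\ul G\mapsto\ul M(\ul G)$, keeping track of the $\cG_L$-equivariance in the comparison isomorphisms so that taking $\cG_{L_\infty}$-invariants is legitimate, and observing that the product over all primes $\frl$ remains finite, which is already contained in the conclusion of Theorem~\ref{theorem.main}.
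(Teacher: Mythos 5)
Your proposal is correct and follows essentially the same route as the paper: deduce the statement from Theorem \ref{theorem.main} via the comparison between $\ul G[\frl^\infty]$ and the $\frl$-adic realizations of the associated $A$-motive, then verify the weight hypothesis for rank-two Drinfeld modules from the weights $\tfrac12,\tfrac12\notin\bZ$. The only (immaterial) difference is that the paper applies Theorem \ref{theorem.main} to the dual motive $\ul M(\ul G)^\vee$, whose rational $\frl$-adic realizations are literally $V_\frl\ul G$, rather than to $\ul M(\ul G)$ itself; since good reduction and the absence of integral weights are preserved under dualization (the weights merely change sign), both versions of the argument go through.
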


\begin{proof}
We put $\ul{M}:=\ul M(\ul G)^\vee$. 
Since $\wc V_\frl\ul M \cong \Hom_{Q_\frl}(\wc V_\frl \ul{M}(G), Q_\frl)\cong V_\frl \ul{G}$, we see that $\ul{M}$ also has good reduction and  no integral weights.
For each prime $\frl$ of $A$, we have a $\cG_L$-equivariant isomorphism 
\[
(\wc V_\frl\ul{M}/\wc T_\frl\ul{M})^{\cG_{L_\infty}} \cong 
(V_\frl\ul{G}/T_\frl \ul G)^{\cG_{L_\infty}} \overset{\sim}{\lra} \ul{G}[\frl^\infty](L_\infty):=\bigcup_{n=1}^\infty \ul{G}[\frl^n](L_\infty).
\]
It follows by Theorem \ref{theorem.main} that $\ul{G}[\frl^\infty](L_\infty)$ is finite for all $\frl$, and that  $\ul{G}[\frl^\infty](L_\infty)=0$ for almost all $\frl$.
Hence $\ul{G}(L_\infty)_\tor$ is finite.
\end{proof}

\subsection{Examples of Drinfeld modules with infinite $L_\infty$-valued torsion}

Assume that $A=\bF_q[t]$ and $z\in Q$ is the monic generator of $\frp$.
In this case,  Condition \ref{condition.main} holds.
For the $\gamma\colon A\to L$, we put $\theta:=\gamma(t)\in L$.
Then we have $A_L\cong L[t]$ and $J_L=(t-\theta)$.

The \textit{Carlitz module} $\ul{C}=(\bG_{a,L}, \Phi)$ over $L$ is a Drinfeld $A$-module of rank one determined by  
\[
\Phi_t=\theta+\tau\in L\{\tau\}.
\]
It plays the role of the multiplicative group $\bG_m$ and gives a function field analogue of cyclotomic theory. 
The associated $A$-motive is given by 
\[
\ul{M}(\ul C)=(L[t]\cdot \mathbf{e},\es \sig\ast\mathbf{e}\mapsto (t-\theta)\mathbf{e})
\]
called the \textit{Carlitz motive} over $L$.
We see that $\ul{C}$ and $\ul{M}(\ul C)$ have good reduction.
It follows by \cite[Example 3.2.7]{HK20} that the local shtuka associated with $\ul{M}(\ul C)$ is  
$
 \ul{\mathbbm{1}}(-1)
$
 and so we have $\wc V_\frp\ul{M}(\ul C)=Q_\frp(-1)$ and $ V_\frp\ul{C}= Q_\frp(1)$.
 
\begin{example}\label{ex.C1}
For two integers $m, n\in \bZ$, the rank-two $A$-motive 
\[
\ul{M}_{m,n}:=\ul{M}(\ul C)^{\ot m} \oplus \ul{M}(\ul C)^{\ot n}
\]
 has good reduction and satisfies 
$\wc V_\frp\ul{M}_{m,n}\cong Q_\frp(-m)\oplus Q_\frp(-n)$. Hence it has weights $m, n$ by Lemma \ref{lemma.intweight}.
Clearly we see that $(\wc V_\frp\ul{M}_{m,n}/\wc T_\frp\ul{M}_{m,n})^{\cG_{L_\infty}}$ is infinite.
Hence if one removes the assumption having non-integral wights, then Theorem \ref{theorem.main} dose not hold.
\end{example}

We next consider the case where $A$-motives have rank equal to the characteristic $p$.
For simplicity, we assume that $q=p$ and so $A=\bF_p[t]$.
 Take a unique element 
$\wt t\in Q^\alg$ such that $\wt t^p=t$.
We set $\wt A:=\bF_p[\wt t]$ and assume that $L$ contains the unique 
$p$-th root $\wt \theta$ of $\theta=\gamma(t)$.
Then $\gamma$ is uniquely extended to $\gamma\colon \wt A\to L$ 
by $\gamma(\wt t)=\wt \theta$.
Putting $\mu^{(p)}:=\sum_{i=0}^n \lam^p\tau^i$ for each $\mu=\sum_{i=0}^n\lam_i\tau^i \in L\{\tau\}$,  we get an $\bF_p$-algebra homomorphism $L\{\tau\}\to L\{\tau\} ; \mu\mapsto \mu^{(p)}$.
 
\begin{definition}
Define $\ul{\wt C}=(\bG_{a,L}, \phi)$ to be a Drinfeld $\wt A$-module such as  
$\phi_{\wt t}=\wt \theta+\tau\in L\{\tau\}$. 
This is the ``Carlitz module for $\wt A$''.
\end{definition}

For each $a\in A$, let $\wt a\in \wt A$ be the unique element 
satisfying $\wt a^p=a$. 
Then $A\to \wt A; a\mapsto \wt a$ is an isomorphism of $\bF_p$-algebras.

\begin{lemma}\label{lemma.two}
The map 
\begin{equation}\label{eq.map}
\ul{\wt C}[\wt{a}](L^\sep) \to \ul C[a](L^\sep); x\mapsto x^p
\end{equation}
is an isomorphism as abelian groups. 
\end{lemma}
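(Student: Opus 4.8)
The plan is to reduce the statement to the single identity $\phi_{\wt a}^{(p)} = \Phi_a$ in $L\{\tau\}$ together with the \'etaleness of the torsion schemes. First I would prove $\phi_{\wt a}^{(p)} = \Phi_a$ for every $a \in A$. Since $\mu \mapsto \mu^{(p)}$ is an $\bF_p$-algebra endomorphism of $L\{\tau\}$ and $\phi \colon \wt A \to L\{\tau\}$ is an $\bF_p$-algebra homomorphism, the composite $\wt a \mapsto \phi_{\wt a}^{(p)}$ is an $\bF_p$-algebra homomorphism $\wt A \to L\{\tau\}$; precomposing it with the $\bF_p$-algebra isomorphism $A \to \wt A$, $a \mapsto \wt a$, gives an $\bF_p$-algebra homomorphism $A \to L\{\tau\}$ sending the generator $t$ to $\phi_{\wt t}^{(p)} = (\wt\theta + \tau)^{(p)} = \wt\theta^p + \tau = \theta + \tau = \Phi_t$. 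As $\Phi \colon A \to L\{\tau\}$ is itself an $\bF_p$-algebra homomorphism with the same value at $t$, the two maps agree, hence $\phi_{\wt a}^{(p)} = \Phi_a$.

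Next I would deduce, for $x \in L^\sep$ and $\phi_{\wt a} = \sum_i c_i \tau^i$ with $c_i \in L$, the relation $(\phi_{\wt a}(x))^p = \sum_i c_i^p x^{p^{i+1}} = \phi_{\wt a}^{(p)}(x^p) = \Phi_a(x^p)$. This shows at once that $x \mapsto x^p$ sends $\ul{\wt C}[\wt a](L^\sep)$ into $\ul C[a](L^\sep)$, that this map is a homomorphism of abelian groups (the $p$-th power map is additive in characteristic $p$), and that it is injective (as $L^\sep$ is a field). For surjectivity, given $y \in \ul C[a](L^\sep)$ I would take its unique $p$-th root $x$ in the perfect field $L^\alg$; then $(\phi_{\wt a}(x))^p = \Phi_a(y) = 0$ forces $\phi_{\wt a}(x) = 0$, so $x \in \ul{\wt C}[\wt a](L^\alg)$. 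Since $\gamma$ remains injective on $\wt A$ (if $\wt\theta$ were algebraic over $\bF_p$ then so would be $\theta = \wt\theta^p$), the torsion $\ul{\wt C}[\wt a]$ is \'etale, whence $\ul{\wt C}[\wt a](L^\alg) = \ul{\wt C}[\wt a](L^\sep)$ and $x \in \ul{\wt C}[\wt a](L^\sep)$ maps to $y$.

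The only step requiring care is this last one: a priori a $p$-th root of $y$ need not be separable over $L$, and it is precisely the \'etaleness of $\ul{\wt C}[\wt a]$ --- equivalently the injectivity of $\gamma|_{\wt A}$ --- that pulls it back into $L^\sep$. Everything else is a formal computation with twisted polynomials, so I do not anticipate further obstacles; one could alternatively conclude surjectivity by a cardinality count, both sides having $p^{\deg a}$ elements, but the \'etaleness argument is cleaner.
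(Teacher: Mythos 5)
Your proposal is correct and follows the paper's argument in its essential step: establishing the identity $\phi_{\wt a}^{(p)} = \Phi_a$ (which you prove carefully from the $\bF_p$-algebra homomorphism structure, whereas the paper states it without elaboration), and using it to show that $x \mapsto x^p$ carries $\ul{\wt C}[\wt a](L^\sep)$ into $\ul C[a](L^\sep)$ injectively. The one genuine divergence is the surjectivity step: the paper closes with a cardinality count, noting that $\ul{\wt C}[\wt a](L^\sep)$ and $\ul C[a](L^\sep)$ both have $\#\bigl(\wt A/(\wt a)\bigr) = \#\bigl(A/(a)\bigr)$ elements, so an injection between finite sets of equal size is a bijection; you instead lift a $p$-th root into $L^\alg$ and invoke the \'etaleness of $\ul{\wt C}[\wt a]$ (valid since $\gamma|_{\wt A}$ is injective, as you correctly argue) to pull it back into $L^\sep$. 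Both routes in fact rest on the same \'etaleness fact --- the paper needs it implicitly to know $\ul{\wt C}[\wt a](L^\sep)$ has full cardinality --- so the difference is presentational rather than conceptual, and you explicitly acknowledge the cardinality alternative yourself. No gaps.
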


\begin{proof}
We have $\phi_{\wt a}^{(p)}=\Phi_a$ for each $\wt{a}\in \wt A$, where $\ul C=(\bG_{a,L}, \Phi)$.
For each $x\in \ul{\wt{C}}[\wt{a}](L^\sep)$, we have 
\[
\Phi_a(x^p)=\phi_{\wt{a}}^{(p)}(x^p)=\phi_{\wt{a}}(x)^p=0.
\]
Thus $x^p\in \ul{C}[a](L^\sep)$ and so (\ref{eq.map}) is well-defined and injective.
Since $\ul{\wt{C}}[\wt{a}](L^\sep)$ and $\ul C[a](L^\sep)$ have the same cardinality (because $\wt{A}/(\wt{a})\cong A/(a)$), the map (\ref{eq.map}) is bijective.
\end{proof}

\begin{example}\label{ex.C2}
We define $\ul{\wt G}:=(\bG_{a,L}, \phi|_{A})$, which is a Drinfeld $A$-module such that 
\[
\phi_t=(\wt \theta+\tau)^p=\theta+\cdots+\tau^p.
\]
Thus $\ul{\wt G}$ is of rank $p$ and has good reduction.
Recall that $\frp=(z)$.
For each $n\geq 1$, we have 
$
\ul{\wt G}[z^n](L^\sep)=\ul{\wt C}[z^n](L^\sep)
$ as abelian groups by definition.
Take an $x\in \ul{\wt G}[z^n](L^\sep)$.
Then $x^p\in \ul{C}[z^{pn}](L^\sep)$ by Lemma \ref{lemma.two} 
and so $g(x^p)=x^p$ for each $g\in \cG_{L_\infty}$, which implies $g(x)=x$.
Hence $\ul{\wt G}[z^n](L_\infty)=\ul{\wt G}[z^n](L^\sep)$.
This means that $\ul{\wt G}(L_\infty)_\tor$ is infinite.
Therefore, the assumption $p\neq 2$ is needed for Theorem \ref{theorem.main}.
\end{example}

%%%%%%%%%%%%%%%%%%%%%%%%%%%%%%%%%%%%%%%%%%%%%%%%%%%%%%%%%%%%%%%%%%%%%%%%%%%%%%%%%%%%%%%%%%%%%%%%%%%%%%%%%%%%%%%%%%%%%%%%%%%%
\section{Application to  Drinfeld modules over global function fields}

In this last section, we suppose that $Q=\bF_q(t)$ and the distinguished place $\infty$ corresponds to $1/t$.
Then $A=\bF_q[t]$.
Let $F:=\bF_q(\theta)$
be the rational function field in variable $\theta$ 
 equipped with the $\bF_q$-algebra homomorphism $\gamma\colon A\to F$ such that $\gamma(t)=\theta$.
Take separable and algebraic closures of $F$ with $F\subset F^\sep\subset F^\alg$.
Let $K$ be a finite extension of $F$ in $F^\alg$.
We notice that, in what follows, the symbol $\frp$ is used for an arbitrarily prime of $A$.

Now the Carlitz module $\ul{C}=(\bG_{a,F},\Phi)$ over $F$ is determined by $\Phi_t=\theta+\tau\in F\{\tau\}$.
For a prime  $\frp$ of $A$ and $n\in \bZ_{>0}$, 
we set 
\[
F(\frp^n):=F(\ul{C}[\frp^n](F^\sep)).
\]
It is Galois over $F$ with an isomorphism
\[
\Gal(F(\frp^n)/F) \overset{\sim}{\lra} (A/\frp^n)^\times
\]
induced by the Galois action on $\ul C[\frp^n](F^\sep)$.
It is known that  $F(\frp^n)/F$ is unramified outside $\frp\infty$, and that $\frp$ is totally ramified and $\infty$ is tamely ramified
 with ramification index $q-1$.
We denote by 
\[
\chi_{\frp}^{} \colon \cG_F \ra A_\frp^\times
\]
the character coming from the Galois action on $T_\frp\ul C$.
If we set $F(\frp^\infty):=\bigcup_{n=1}^\infty F(\frp^n)$, then 
the character $\chi_{\frp}$  factors through $\Gal(F(\frp^\infty)/F)$ and  induces an isomorphism
\[
\chi_{\frp}\colon 
\Gal(F(\frp^\infty)/F) \overset{\sim}{\lra} A_\frp^\times
\]
as topological groups.
We set 
\[
\wt F:=F(\ul{C}(F^\sep)_\tor), 
\]
which coincides with the composite of all  $F(\frp^\infty)$. 
By the Kronecker-Weber-type theorem for $F=\bF_q(\theta)$ due to Hayes \cite[Theorem 7.1]{Hay74}, the maximal abelian extension $F^\ab$ of $F$ 
is given as the composite of three linearly disjoint fields 
$\bF_q^\alg$, $\wt F$, and $\cF$.
Here, $\bF_q^\alg$ is the algebraic closure of $\bF_q$ in $F^\sep$ and $\cF$ is an extension of $F$ such that the place associated with $1/\theta$ is totally wildly ramified.

Putting  $K(\frp^n):=KF(\frp^n)$, $K(\frp^\infty):=KF(\frp^\infty)$,  and  
$
\wt K:=K\wt F
$,  we get an analogue of Ribet's theorem in \cite[Appendix]{KL81}.

\begin{theorem}\label{theorem.Ribet}
Let $\ul G$ be a Drinfeld $A$-module of rank two over $K$.
If $p\neq 2$, then $\ul G(\wt K)_\tor$ is finite. 
\end{theorem}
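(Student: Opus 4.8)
The plan is to reduce the statement to two claims: (i) $(V_\frp\ul G)^{\cG_{\wt K}}=0$ for every prime $\frp$ of $A$, and (ii) $\ul G[\frp](\wt K)=0$ for all but finitely many $\frp$. Granting these, Lemma~\ref{lemma.fix} applied to $T_\frp\ul G\subset V_\frp\ul G$ turns (i) into the finiteness of $\ul G[\frp^\infty](\wt K)\cong(V_\frp\ul G/T_\frp\ul G)^{\cG_{\wt K}}$ for each $\frp$, and together with (ii) this forces $\ul G(\wt K)_\tor=\bigoplus_\frp\ul G[\frp^\infty](\wt K)$ to be finite. Since $\wt{K'}=K'\wt F\supseteq\wt K$ for every finite extension $K'/K$, I may freely replace $K$ by a finite extension, and I would do so to arrange that $\ul G$ has stable reduction everywhere; fix the finite set $S$ of places of bad reduction. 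It should be emphasized that, unlike in the local setting, Corollary~\ref{corollary.Amod} cannot be invoked place by place: the completion of $\wt F$ at a place over $\gamma(\frp)$ contains, besides the $z$-adic cyclotomic tower $F_{\frp,\infty}$, a large unramified subextension coming from the fields $F(\frl^\infty)$ with $\frl\neq\frp$, so the local finiteness theorem is too weak here. The argument has to be global, in Ribet's style, with the classification of crystalline characters of \S 3 as its engine.

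For (i), assume $(V_\frp\ul G)^{\cG_{\wt K}}\neq0$. Decomposing this module over the abelian group $\Gal(\wt K/K)$ produces a character $\psi\colon\cG_K\to\overline{Q_\frp}^\times$ which occurs in $V_\frp\ul G$ and is trivial on $\cG_{\wt K}$, hence factors through the injection $\Gal(\wt K/K)\hookrightarrow\Gal(\wt F/F)$. By the N\'eron--Ogg--Shafarevich criterion $V_\frp\ul G$ is unramified at every place of good reduction with residue characteristic $\neq\frp$, and at a place of Tate reduction its semisimplification is unramified there too (the Tate uniformization writes $V_\frp\ul G$ as an extension of the trivial module by the Tate module of a rank-one Drinfeld module of good reduction); so $\psi$ is unramified outside $S\cup\{\gamma(\frp)\}$ and tamely ramified at $\gamma(\infty)$. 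Since $F(\frl^\infty)/F$ is ramified only at $\gamma(\frl)$ --- with inertia there equal to the entire factor $A_\frl^\times=\Gal(F(\frl^\infty)/F)$ --- and at $\gamma(\infty)$, and since $\wt F/F$ contains no constant subextension by Hayes's Kronecker--Weber theorem \cite[Theorem~7.1]{Hay74}, the character $\psi$ must factor through $A_\frp^\times\times\mu_{q-1}$: the $A_\frl^\times$-components with $\gamma(\frl)\notin S\cup\{\gamma(\frp),\gamma(\infty)\}$ vanish by unramifiedness, and those with $\gamma(\frl)\in S$ and $\frl\neq\frp$ vanish because of the Tate reduction.

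I would then show that $\psi=\psi_\frp\cdot\xi$ with $\xi$ of order dividing $q-1$ and $\psi_\frp=\chi_\frp^{n}\cdot\eta$, where $\eta$ is a character of $A_\frp^\times$ of $p$-power order bounded in terms of $\ul G$ alone and $n\in\bZ$ has $|n|$ bounded (by the Newton slopes of $\ul M(\ul G)$). For a place $v|\gamma(\frp)$ with $v\notin S$ this is immediate from the crystallinity of $V_\frp\ul G|_{\cG_{K_v}}$ (Example~\ref{ex.motiveshtuka}) and Lemma~\ref{lem.char}; for $v\in S$ one applies Lemma~\ref{lem.char} to the crystalline rank-one constituent produced by the Tate uniformization. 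Now fix a place $v'\notin S$ lying over a prime $\frl'$ of $A$ with $\frl'\notin S$ and $\frl'\neq\frp$, and evaluate $\psi$ at the geometric Frobenius $\Frob_{v'}$. The Carlitz reciprocity law --- essentially Lemma~\ref{lemma.Frob} applied to the Carlitz motive $\ul M(\ul C)$ over $K_{v'}$, cf.\ \cite{Hay74} --- shows that $\chi_\frl(\Frob_{v'})$ is, for \emph{every} $\frl\neq\frl'$, the image in $A_\frl^\times$ of one and the same element $\pi_{v'}\in A$ with $|\pi_{v'}|=\#k_{v'}$. Hence $\psi(\Frob_{v'})=\pi_{v'}^{n}\cdot(\text{root of unity})$, so $|\psi(\Frob_{v'})|=(\#k_{v'})^{n}$. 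On the other hand $\psi(\Frob_{v'})$ is a root of the characteristic polynomial of $\Frob_{v'}$ on $V_\frp\ul G\cong\wc V_\frp\ul M(\ul G)^\vee$, which by Lemma~\ref{lemma.Frob} is $P(X;\ul M(\ul G)^\vee)$ computed at $v'$, and by Goss's estimate \cite[Theorem~4.12.8]{Gos96} all its roots have absolute value $(\#k_{v'})^{1/2}$. Therefore $n=1/2$, which is impossible; this proves (i). For (ii) I would run the same computation modulo $\frp$: if $\ul G[\frp](\wt K)\neq0$ then $\frp$ divides $P(\pi_{v'}^{n};\ul M(\ul G)^\vee)$ for one of the finitely many admissible values of $n$, a fixed nonzero element of $A$, and this excludes all but finitely many $\frp$.

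The step I expect to be the main obstacle is the handling of the finitely many primes $\frp$ sitting below a place of bad reduction: there the Hodge--Pink machinery of \S 3 does not literally apply, and one must feed in the explicit rigid-analytic Tate uniformization of a rank-two Drinfeld module of stable reduction in order to see that $\psi_\frp$ is still cyclotomic with a bounded exponent. Controlling the $p$-power part of $\eta$, and above all pinning down the reciprocity law $\chi_\frl(\Frob_{v'})=\pi_{v'}$ with the correct normalization (so that Goss's Weil bound can be played off against it, exactly as in Ribet's original argument), are the other points that will need care; the bookkeeping of $A$-field characteristics in \S 5 needed to quote Lemma~\ref{lemma.Frob} over the $K_{v'}$ is routine by comparison.
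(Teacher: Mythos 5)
Your overall architecture (split into ``$(V_\frp\ul G)^{\cG_{\wt K}}=0$ for all $\frp$'' plus ``$\ul G[\frp](\wt K)=0$ for almost all $\frp$'', kill the prime-to-$\frp$ ramification via stable reduction and unipotence, force the character through $\Gal(K(\frp^\infty)/K)\subset A_\frp^\times$, then derive a contradiction from weights) is the same as the paper's (Theorems \ref{theorem.R1} and \ref{theorem.R2}, via Lemmas \ref{lemma.unip} and \ref{lemma.ext}). Your final contradiction is packaged differently --- you evaluate the character at an auxiliary Frobenius $\Frob_{v'}$ with $v'\nmid\frp$ and play the Carlitz reciprocity law against Goss's bound $(\#k_{v'})^{1/2}$, whereas the paper stays at a place $v\mid\frp$ and compares the crystalline weight of $Q_\frp(n)$ (Lemma \ref{lemma.intweight}, i.e.\ the product formula) with the non-integral weights of $\ul M(\ul G)$ --- but these are equivalent computations, and your version has the mild advantage of treating the exponent $n=0$ and $n\neq 0$ cases uniformly where the paper separately invokes Poonen and the twist $\ul M(\ul G)^\vee\otimes\ul M(\ul C)^{\otimes n'}$ with Proposition \ref{proposition.frl}.

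There are, however, two genuine gaps. First and most seriously: you never confront the case where the irreducible $\Gal(\wt K/K)$-constituent of $(V_\frp\ul G)^{\cG_{\wt K}}$ is two-dimensional over $Q_\frp$, with endomorphisms by a quadratic field $E$. Your $\overline{Q_\frp}$-character $\psi$ is then $E$-valued, and Lemma \ref{lem.char}, which you cite as making the local shape of $\psi$ at $v\mid\frp$ ``immediate'', applies only to one-dimensional crystalline representations over $Q_\frp$. A priori $\psi\vert_{\cI_{K_v}}$ is a Lubin--Tate character of $E$ with an arbitrary local CM type $(d_\phi,d_{s\phi})$, and showing that it is in fact an integral power of $\chi_{z_\frp}$ is exactly the content of Lemma \ref{lemma.key}: one needs $E/Q_\frp$ to be Galois (this is where $p\neq 2$ enters --- for $p=2$ the quadratic extension can be inseparable, and Example \ref{ex.C2} shows the theorem then fails) and a norm computation forcing $d_\phi=d_{s\phi}$. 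Since your write-up never indicates where $p\neq2$ is used, this key step is missing rather than merely compressed. Second, for the ``almost all $\frp$'' statement the exponent $n$ in $\bar\rho=\bar\chi_\frp^{\,n}$ is only defined modulo $q^{\deg\frp}-1$; your phrase ``one of the finitely many admissible values of $n$'' presupposes a bound on $n$ uniform in $\frp$, which does not come from Newton slopes but from a tame-inertia (fundamental character) estimate at $v\mid\frp$ using that the associated local shtuka has height $\leq 1$ --- this is the input \cite[Proposition 4.23]{Oku24} in the paper's proof of Theorem \ref{theorem.R1}, and without it the ``fixed nonzero element of $A$'' in your divisibility argument is not fixed.
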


This follows from the next two theorems.

\begin{theorem}\label{theorem.R1}
Let $\ul G$ be a Drinfeld $A$-module of rank $r\geq 2$ over $K$.
Then $\ul G[\frp^\infty](\wt K)=0$ for almost all $\frp$.
\end{theorem}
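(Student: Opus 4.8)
The plan is to reduce the statement to a statement about the image of the mod-$\frp$ Galois representation attached to $\ul{G}$ and then play off the ramification behaviour of $\wt{K}/K$ against the size of that image. First I would recall that $\ul{G}[\frp^\infty](\wt K)\neq 0$ forces $\ul{G}[\frp](\wt K)\neq 0$, i.e.\ the mod-$\frp$ representation $\bar{\rho}_{\ul G,\frp}\colon \cG_K\to \Aut_{\bF_\frp}\bigl(\ul G[\frp](K^\sep)\bigr)\cong \GL_2(\bF_\frp)$ has a non-zero fixed vector when restricted to $\cG_{\wt K}$. Since $\wt K = K\wt F$ and $\wt F$ is abelian over $F$, the subgroup $\bar{\rho}_{\ul G,\frp}(\cG_{\wt K})$ is a normal subgroup of $\bar{\rho}_{\ul G,\frp}(\cG_K)$ with abelian quotient; so it contains the closure of the commutator subgroup. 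The strategy is therefore: for almost all $\frp$, show $\bar{\rho}_{\ul G,\frp}(\cG_K)$ is large enough that its commutator subgroup (hence $\bar{\rho}_{\ul G,\frp}(\cG_{\wt K})$) has no non-zero fixed vector on $\ul G[\frp]$.

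The key input is an open-image theorem for Drinfeld modules: by Pink--Rütsche (or Pink's work on the adelic image), for a Drinfeld $A$-module $\ul G$ of rank $r\ge 2$ over a finitely generated field $K$ with $\End_{\bar K}\ul G = A$, the image of $\cG_K$ in $\GL_r(A_\frp)$ is open, and in fact for almost all $\frp$ it equals all of $\GL_r(A_\frp)$; in particular $\bar\rho_{\ul G,\frp}(\cG_K)\supseteq \SL_2(\bF_\frp)$ for almost all $\frp$ (the case $r=2$). For those $\frp$ with $\#\bF_\frp \ge 4$, the group $\SL_2(\bF_\frp)$ is its own commutator subgroup and is perfect, and it has no non-zero fixed vector on the standard module $\bF_\frp^2$ (any vector fixed by all of $\SL_2(\bF_\frp)$ is fixed by the unipotent subgroups, forcing it to be zero). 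Hence $\bar\rho_{\ul G,\frp}(\cG_{\wt K}) \supseteq \SL_2(\bF_\frp)$ has no non-zero fixed vector, so $\ul G[\frp](\wt K)=0$, and a fortiori $\ul G[\frp^\infty](\wt K)=0$. For the general rank $r\ge 2$ one argues the same way with $\SL_r(\bF_\frp)$, whose only fixed vector on $\bF_\frp^r$ is $0$. I would also dispose of the finitely many remaining bad $\frp$ and of the case $\End_{\bar K}\ul G \supsetneq A$ by a separate short argument (for CM Drinfeld modules the image lies in a torus, but one still has enough non-triviality after the abelian quotient; alternatively one enlarges $K$ to trivialise extra endomorphisms, which only shrinks $\wt K\cap K^\sep$ by a finite amount and changes the finite exceptional set).

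The main obstacle is marshalling the open-image theorem in the precise form needed: one must ensure that "almost all $\frp$" in the surjectivity statement can be combined with "almost all $\frp$" coming from $\#\bF_\frp$ large and from the comparison $\wt K/K$ versus the field cut out by $\bar\rho_{\ul G,\frp}$. The delicate point is that $\wt K$ is an infinite abelian extension (the Carlitz torsion tower), so one needs that the image of $\cG_{\wt K}$ in $\GL_2(\bF_\frp)$, while possibly smaller than the image of $\cG_K$, still contains the commutator subgroup $\SL_2(\bF_\frp)$; this is exactly why the perfectness of $\SL_2(\bF_\frp)$ for $\#\bF_\frp\ge 4$ is used, and it is why only "almost all" rather than "all" $\frp$ can be claimed. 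Once the open-image theorem is invoked, the rest is elementary group theory, so I expect the proof to be short modulo that citation.
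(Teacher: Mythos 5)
Your proposal takes a genuinely different route from the paper. The paper never invokes an open-image theorem: it shows that any non-zero irreducible $\bF_\frp$-subrepresentation $W\subset \ul G[\frp](\wt K)$ of $\Gal(\wt K/K)$ must factor through $\Gal(K(\frp)/K)$ (using the auxiliary unramifiedness conditions (A1), (A2)), is therefore one-dimensional and given by $\bar\chi_\frp^n$ for some small $n$; the exponent $n$ is then bounded by $p^m=[K:K_0]$ via the theory of local shtukas and fundamental characters at a place of good reduction above $\frp$, and the remaining cases are killed by Poonen's theorem (for $n=0$) and a weight computation on the twisted $A$-motive $\ul M(\ul G)^\vee\otimes \ul M(\ul C)^{\otimes n'}$ together with Proposition~\ref{proposition.frl} (for $1\leq n\leq p^m$). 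Your approach is shorter and more conceptual where it applies, but it relies on Pink--R\"utsche, which is proved under $\End_{K^\sep}(\ul G)=A$; the paper's argument requires no such hypothesis.

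There is a genuine gap in your sketch: the theorem is stated for an arbitrary rank-$r$ Drinfeld module, and the CM case breaks the group-theoretic reduction entirely. If $\End_{\overline K}(\ul G)\supsetneq A$, the image $\bar\rho_{\ul G,\frp}(\cG_K)$ is abelian (contained in $(\cO_E\otimes_A\bF_\frp)^\times$ for the CM order $\cO_E$), so its commutator subgroup is trivial and $\bar\rho_{\ul G,\frp}(\cG_{\wt K})\supseteq [\bar\rho_{\ul G,\frp}(\cG_K),\bar\rho_{\ul G,\frp}(\cG_K)]=\{1\}$ gives no information at all. Your proposed fixes do not close this: enlarging $K$ by a finite extension does not change $\End_{\overline K}(\ul G)$ (it only puts the extra endomorphisms over the base, which makes the image \emph{more} abelian, not less), and the phrase ``enough non-triviality after the abelian quotient'' has no content when the group is already abelian. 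To cover CM you would need a separate argument of a completely different character -- for instance the kind of explicit analysis the paper actually carries out, identifying the character cut out by $W$ as a power of $\bar\chi_\frp$ and bounding the exponent via fundamental characters and weights. Without that, the proof covers only the non-CM case.
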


\begin{theorem}\label{theorem.R2}
Let $\ul G$ be a Drinfeld $A$-module of rank two over $K$.
If $p\neq 2$, then $\ul G[\frp^\infty](\wt K)$ is finite for all $\frp$.
\end{theorem}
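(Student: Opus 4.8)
The plan is to rephrase the assertion cohomologically and then to separate the Galois representation $V_\frp\ul G$ into its determinant, which $\wt K$ controls because it is Carlitz--cyclotomic, and a complementary part, which is large.

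First, using $\wc V_\frp\ul M(\ul G)^{\vee}/\wc T_\frp\ul M(\ul G)^{\vee}\cong V_\frp\ul G/T_\frp\ul G\cong\ul G[\frp^\infty](K^\sep)$ together with Lemma \ref{lemma.fix} applied to the group $\cG_{\wt K}$, it is enough to show $(V_\frp\ul G)^{\cG_{\wt K}}=0$. Since $\wt F/F$ is abelian, $\wt K=K\wt F$ is abelian over $K$, so $W:=(V_\frp\ul G)^{\cG_{\wt K}}$ is a $\cG_K$-subrepresentation of $V_\frp\ul G$; assume $W\neq 0$. Now $\wedge^2\ul M(\ul G)$ is an effective rank-one $A$-motive of dimension one, hence is isogenous to the Carlitz motive $\ul M(\ul C)$, so $\det\rho_{\ul G,\frp}$ agrees with $\chi_\frp$ up to a character of finite order; after replacing $K$ by a finite extension (which only shrinks $\cG_{\wt K}$, hence strengthens the statement) we may assume $\det\rho_{\ul G,\frp}=\chi_\frp$. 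As $\chi_\frp$ factors through $\Gal(K(\frp^\infty)/K)$ and $K(\frp^\infty)=KF(\frp^\infty)\subseteq\wt K$, we obtain $\det\rho_{\ul G,\frp}(\cG_{\wt K})=1$, i.e.\ $\rho_{\ul G,\frp}(\cG_{\wt K})\subseteq\mathrm{SL}_2(A_\frp)$ in a suitable basis; moreover $\cG_{\wt K}\supseteq\overline{[\cG_K,\cG_K]}$, so $\rho_{\ul G,\frp}(\cG_{\wt K})$ contains the commutator subgroup of $\rho_{\ul G,\frp}(\cG_K)$.

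If $\ul G$ is non-CM, i.e.\ $\End(\ul G_{\bar K})\ot_A Q=Q$, then by the Tate conjecture for Drinfeld modules (Taguchi, Tamagawa) $\End_{\cG_K}(V_\frp\ul G)=Q_\frp$ and $V_\frp\ul G$ is not induced from a quadratic extension; combined with the infiniteness of $\det\rho_{\ul G,\frp}=\chi_\frp$ this forces the Zariski closure of $\rho_{\ul G,\frp}(\cG_K)$ in $\GL_{2}$ to be all of $\GL_2$ (Pink's Mumford--Tate-type theorem for Drinfeld modules). Then the Zariski closure of the commutator subgroup of $\rho_{\ul G,\frp}(\cG_K)$ is $[\GL_2,\GL_2]=\mathrm{SL}_2$, so $\rho_{\ul G,\frp}(\cG_{\wt K})$ is Zariski-dense in $\mathrm{SL}_2$ and hence fixes no nonzero vector of the standard representation; this contradicts $W\neq0$. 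So the non-CM case is done.

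It remains to treat the CM case, where $\End(\ul G_{\bar K})\ot_A Q=H$ is an imaginary quadratic extension of $Q$. After a further finite base change we may assume $\End(\ul G)$ is the integral closure $A_H$ of $A$ in $H$, that $H\subseteq K$, and---since CM Drinfeld modules have potentially good reduction everywhere---that $\ul G$ has good reduction at every place of $K$ above $\frp$. Then $V_\frp\ul G$ is free of rank one over $H\ot_QQ_\frp=\prod_{\frP\mid\frp}H_\frP$ and $\rho_{\ul G,\frp}$ takes values in $(H\ot_QQ_\frp)^\times$, so a nonzero $\cG_{\wt K}$-fixed vector yields, for some $\frP\mid\frp$, a character $\psi_\frP\colon\cG_K\to H_\frP^\times$ with $\psi_\frP|_{\cG_{\wt K}}=1$. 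I would rule this out by localising at a place $w\mid\frp$ of $K$, taking $z\in Q$ to be the monic generator of $\frp$ (so Condition \ref{condition.main} holds) and putting $L=K_w$: the completion of $F(\frp^\infty)$ at $w$ is precisely the local $z$-adic cyclotomic extension, whence $L_\infty=LF_{\frp,\infty}$ lies in the completion $\wt K_{\wt w}$ of $\wt K$ at a place $\wt w$ above $w$, while for $\frl\neq\frp$ the completions of $F(\frl^\infty)$ at $w$ are unramified over $L$ with residue fields exhausting $\bar k$ (geometric Frobenius acts on prime-to-$\frp$ Carlitz torsion by monic polynomials, which have infinite order in $A_\frl^\times$). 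Hence $\wt K_{\wt w}=L_\infty L^\ur$ and $\cG_{\wt K}\cap\cG_L=\cI_{L_\infty}$, the inertia subgroup of $L_\infty$. If $\frP$ is non-split over $\frp$, then $\ul G[\frP^\infty]$ over $L$ is the Lubin--Tate formal $\cO_{H_\frP}$-module, so by local class field theory (Remark \ref{rem.LT} and Lemma \ref{lem.com}) $\psi_\frP|_{\cI_L}$ corresponds to $x\mapsto N_{L/H_\frP}(x)^{-1}$ and $\chi_z|_{\cI_L}$ to $x\mapsto N_{L/F_\frp}(x)^{-1}$; since $N_{L/F_\frp}$ factors through $N_{L/H_\frP}$ but not conversely, $\psi_\frP|_{\cI_L}$ is nontrivial on $\cI_{L_\infty}=\ker(\chi_z|_{\cI_L})$, contradicting $\psi_\frP|_{\cG_{\wt K}}=1$. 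If $\frp$ splits in $H$, one of the two characters, say $\psi_{\frP_2}$, is unramified at $w$, hence---by good reduction away from $\frp$---unramified everywhere, and it is nontrivial (otherwise $\ul G(K)_\tor$ would contain an infinite $\frp$-group, impossible for a Drinfeld module over a global function field); by Hayes's Kronecker--Weber theorem $\wt F$ contains no nontrivial everywhere-unramified, and no nontrivial constant-field, extension of $F$, so $\psi_{\frP_2}$ does not factor through $\Gal(\wt K/K)$, and then $\psi_{\frP_1}=\chi_\frp\psi_{\frP_2}^{-1}$ is likewise nontrivial on $\cG_{\wt K}$ because $\chi_\frp|_{\cG_{\wt K}}=1$. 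In every case no nonzero vector is $\cG_{\wt K}$-fixed, so $W=0$, as desired.

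The main obstacle is the CM case, and within it the split (ordinary) sub-case: there $V_\frp\ul G$ is globally reducible and $\cG_{\wt K}$ already acts only through the determinant, so the large-image argument is unavailable and one must instead show by hand that the Hecke characters attached to a CM Drinfeld module do not lie in the Carlitz cyclotomic tower---which is where the finiteness of $\ul G(K)_\tor$ and the shape of $F^\ab$ from Hayes's theorem are needed. The non-CM case, by contrast, is a formal consequence of the determinant computation together with Pink's theorem on Galois images of Drinfeld modules.
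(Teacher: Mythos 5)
The paper's proof of Theorem 5.5 makes no CM/non-CM dichotomy: it shows the $\cG_K$-action on the irreducible piece $V\subset(V_\frp\ul G)^{\cG_{\wt K}}$ factors through $\Gal(K(\frp^\infty)/K)$ (via assumptions (A1) and (A2)), then observes that after restricting to a decomposition group $\cG_L$, $L=K_v$ with $v\mid\frp$, the representation $V|_{\cG_L}$ is crystalline (using strongly semi-stable reduction and [Oku24, Prop.\ 5.15] to deal with bad reduction), fixed by $\cG_{L_\infty}$, and of dimension $\le 2$ --- so the local key lemma (Lemma~\ref{lemma.key}) and Lemma~\ref{lem.char} produce a $Q_\frp(n)$ inside it, contradicting the local main theorem. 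Your approach is genuinely different: you reduce to $(V_\frp\ul G)^{\cG_{\wt K}}=0$, observe $\det\rho_{\ul G,\frp}$ kills $\cG_{\wt K}$, and split into cases. In the non-CM case you invoke Pink's big-image theorem to get Zariski density of $\rho(\cG_{\wt K})$ in $\mathrm{SL}_2$; in the CM case you reduce to the non-triviality of abelian characters $\psi_\frP$ on $\cG_{\wt K}$, argued via Lubin-Tate theory and class field theory. This is closer in spirit to Ribet's original dichotomy argument for abelian varieties; it buys you a route that does not pass through the local crystalline machinery of \S3--4 (except via Lemma~\ref{lem.com}), at the cost of a much heavier global input (Pink's theorem) and a more delicate CM analysis. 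Your non-CM argument is correct and, notably, does not even use $p\ne 2$.

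However, the split CM case as written has a genuine gap. You claim that $\psi_{\frP_2}$, being unramified at the fixed $w\mid\frp$ and having good reduction away from $\frp$, is ``unramified everywhere.'' This is false: at a place $w'\mid\frp$ of $K$ lying over $\frP_2$ in $H$, the prime $\frP_2$ becomes the formal (connected) factor of the reduction of $\ul G$, so $\psi_{\frP_2}$ is ramified at $w'$, with $\psi_{\frP_2}|_{\cI_{K_{w'}}}$ agreeing with a power of $\chi_z$ up to a finite character. So neither ``$\psi_{\frP_2}$ is unramified at all finite places'' nor the local Lubin-Tate computation at $w'$ produces a contradiction by itself; the inertial behaviour of $\psi_{\frP_2}$ at $w'$ is consistent with factoring through $\Gal(\wt K/K)$, since $\Gal(\wt K/K)$ itself is $\chi_\frp$-ramified at places above $\frp$. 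A repair is possible but requires combining both kinds of information (unramified at the $\frP_1$-places above $\frp$, forcing the ``$\chi_\frp$-exponent'' to be $0$, hence $\psi_{\frP_2}$ has finite order and one contradicts finiteness of $\ul G(K')_\tor$), and it must use a statement about $\wt K/K$, not $\wt F/F$: the appeal to Hayes's theorem for $F$ is not the right tool here --- what is needed is precisely Lemma~\ref{lemma.ext}(1) of the paper, that the maximal subextension of $\wt K/K$ unramified at all finite places is \emph{finite over $K$}. Finally, a small point: the equality $\wt K_{\wt w}=L_\infty L^\ur$ is in general a strict inclusion $\wt K_{\wt w}\subseteq L_\infty L^\ur$ (the completions of $F(\frl^\infty)$ at $\frp$ do not exhaust $F_\frp^\ur$), but the inclusion is the one you actually use, so this does not affect the argument.
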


To prove them,  we shall prepare some lemmas.
Let  $z_\frp\in A$ be the monic generator of $\frp$ 
and set $\zeta_\frp:=\gamma(z_\frp)\in F$.
Then $\gamma$ induces an isomorphism  
\[
\gamma\colon Q_\frp \overset{\sim}{\ra} F_\frp:=\bF_\frp\dpl \zeta_\frp \dpr.
\]
We note that Condition \ref{condition.main} holds for $z_\frp$.
As in \S 1, we consider the $z_\frp$-adic cyclotomic extension 
$
F_{\frp,\infty}
$
 of $F_\frp$.
Taking an embedding $F^\alg \hra F_\frp^\alg$, we may view $\cG_{F_\frp}$ 
as a subgroup of $\cG_F$.
Since the restriction of $\chi_{\frp}$ to $\cG_{F_\frp}$
coincides with the $z_\frp$-adic cyclotomic character 
$\chi_{z_\frp}$, we have $F(\frp^\infty) \subset F_{\frp,\infty}$.

Any place of $F$ not lying above $1/\theta$, called a \textit{finite place}, is corresponds to a prime of $A$ via $\gamma\colon A\to F$.
We denote by $\Sigma_F$ the set consisting of all finite places of $F$, which is identified with the set of all primes of $A$.
For each place $v$ of $K$, we write $K_v$ for the completion of $K$ at $v$.
If $v$ divides some $\frp\in \Sigma_F$, then it is called a \textit{finite place} of $K$, and then $\gamma\colon A\to F\subset F_\frp\subset K_v$ factors through the valuation ring of $K_v$.
Denote by $\Sigma_K$ the set consisting of all finite places of $K$.

\begin{lemma}\label{lemma.unip}
For each Drinfeld $A$-module $\ul G$ over $K$, there is a finite separable extension $K'/K$ such that 
for each finite place $w$ of $K'$ and each $\frp\in \Sigma_F$ with $w\nmid \frp$, the $\cI_{K_w'}$-action on $T_\frp \ul G$ is unipotent. 
\end{lemma}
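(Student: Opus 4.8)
The plan is to reduce the statement to the function-field analogue of the classical fact that an abelian variety with semistable reduction at a place $v$ has unipotent $\cI_v$-action on its $\ell$-adic Tate module ($\ell$ prime to the residue characteristic). The two geometric inputs I would invoke, rather than reprove, are: (i) the \emph{potentially stable reduction theorem} for Drinfeld modules — after a finite separable base extension, a Drinfeld $A$-module acquires stable reduction at every finite place (the analogue of Grothendieck's semistable reduction theorem; see e.g.\ \cite{Gos96}); and (ii) the analytic (Tate) uniformization of a Drinfeld module with stable reduction over a local field.

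First I would note that $\ul G$ has good reduction at all but finitely many finite places of $K$ (spread $\ul G$ out over an open part of the smooth projective curve with function field $K$); call $S\subset\Sigma_K$ the finite exceptional set. By (i) there is a finite separable $K_1/K$ with $\ul G_{K_1}$ of stable reduction at every finite place: good reduction is inherited by base change at the places over $\Sigma_K\setminus S$, and a finite extension takes care of the finitely many places over $S$. Replacing $K$ by $K_1$, assume stable reduction everywhere. Now fix a finite place $w$ of $K$ and a prime $\frp\in\Sigma_F$ with $w\nmid\frp$. By (ii) the Drinfeld module $\ul G_{K_w}$ is the quotient of a Drinfeld $A$-module $\psi$ over $K_w$ of some rank $r'\le r$ with good reduction, by a projective $A$-lattice $\Lambda$ of rank $r-r'$ lying $\cG_{K_w}$-stably in $\psi(K_w^{\sep})$; passing to $\frp^{n}$-torsion (using that $\psi(K_w^{\sep})$ is $\frp$-divisible with $\Lambda$ $\frp$-torsion free) and taking the inverse limit yields a short exact sequence of $\cG_{K_w}$-modules
\[
0\longrightarrow T_\frp\psi\longrightarrow T_\frp\ul G\longrightarrow \Lambda\otimes_A A_\frp\longrightarrow 0.
\]
Since $w\nmid\frp$, the N\'eron--Ogg--Shafarevich criterion recalled in \S2 (cf.\ \cite{Gar02, Gaz24}) gives that $\cI_{K_w}$ acts trivially on $T_\frp\psi$, while it acts on $\Lambda\otimes_A A_\frp$ through the finite quotient $\Gal(K_w(\Lambda)/K_w)$ (the $A$-linear Galois action on the discrete finitely generated module $\Lambda$ has compact, hence finite, image). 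Only the finitely many places of bad reduction give $\Lambda\neq 0$, so a single further finite separable extension $K'/K_1$ trivializes all of these at once; then for every finite place $w'$ of $K'$ and every $\frp$ with $w'\nmid\frp$, $\cI_{K'_{w'}}$ acts trivially on both the sub and the quotient above, hence — in a basis of $T_\frp\ul G$ adapted to the filtration — by unipotent matrices, which is the assertion (the good-reduction case is $\Lambda=0$).

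The hard part is entirely the geometric input: the potentially stable reduction theorem and the analytic uniformization of Drinfeld modules over local fields, which replace Grothendieck's semistable reduction theorem and the uniformization of abelian varieties in the classical argument; I would cite these. A secondary point to handle with care is that one finite extension $K'$ must work for \emph{all} (infinitely many) finite places of $K'$ simultaneously — this is harmless because good reduction holds at all but finitely many places and already gives the conclusion there, so only the finitely many bad places impose any condition, both in acquiring stable reduction and in trivializing the associated lattices.
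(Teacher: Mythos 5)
Your argument is correct and follows essentially the same route as the paper: reduce to stable reduction at the finitely many bad places via a finite separable extension (the paper cites \cite[Lemma 2.10]{DD99}), invoke the Tate uniformization to obtain the exact sequence $0\to T_\frp\ul G'\to T_\frp\ul G\to\Lambda\otimes_A A_\frp\to 0$, note the sub is unramified by the N\'eron--Ogg--Shafarevich criterion and the quotient is trivialized by one further finite extension, whence unipotence. No gaps.
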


\begin{proof}
Since $\ul G$ has good reduction at almost all finite places of $K$, we may consider only remained $v_1,\ldots,v_m\in \Sigma_K$ at which $\ul G$ is not good.
By \cite[Lemma 2.10]{DD99}, we may assume that $\ul G$ has \textit{stable reduction} (see \cite[Definition 4.10.1]{Gos96}) at all $v_i$.
For each $\frp\in \Sigma_F$ with $v_i\nmid \frp$ for some $i$,  
 the Tate uniformization (see \cite[\S 7]{Dri74}) implies that there are a Drinfeld $A$-module $\ul G'$ of rank $\leq \rk \ul G$ over $K_{v_i}$ with good reduction and a $\cG_{K_{v_i}}$-stable finite generated $A$-submodule $\Lambda_{v_i}$ of $K_{v_i}^\sep$  
such that they fit an exact sequence of $A_\frp[\cG_{K_{v_i}}]$-modules
\[
0\to T_\frp\ul{G}' \to T_\frp\ul{G} \to \Lambda_{v_i}\otimes_A A_\frp\to 0.
\]
Since the $\cG_{K_{v_i}}$-action on $\Lambda_{v_i}$ factors through a finite quotient of $\cG_{K_{v_i}}$,  
after replacing $K$ by a finite separable extension if necessarily, we may assume that 
$\cI_{K_{v_i}}$ acts trivially on $\Lambda_{v_i}$. 
This shows the lemma.
%Hence the $\cI_{K_v}$-action on $T_\frp\ul{G}$ is unipotent for each $\frp$ with $v\nmid \frp$. 
\end{proof}

\begin{lemma}\label{lemma.ext}
Let $\wt K_\ur$ be the largest subextension of $\wt K/K$ such that all $v\in \Sigma_K$ are unramified.
It follows that 
\begin{itemize}
\item[(1)] $\wt K_\ur$ is a finite extension of $K$, and 
\item[(2)] for each $\frp\in \Sigma_F$, the composite $\wt K_\ur K(\frp^\infty)$ is the maximal extension of $K$ in $\wt K$ which is unramified at all $v\in \Sigma_K$ with $v\nmid \frp$. 
\end{itemize}
\end{lemma}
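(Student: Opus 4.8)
The plan is to reduce the whole statement to abelian Galois theory together with a short ramification computation. Since $\wt K = K\wt F$ is contained in $F^\ab$, the extension $\wt K/K$ is abelian, and $\Gal(\wt K/K)\cong\Gal(\wt F/\wt F\cap K)$ is an open subgroup of $\Gal(\wt F/F)\cong\prod_{\frl}A_\frl^\times$ (product over all primes $\frl$ of $A$, via the characters $\chi_\frl$), because $K/F$ is finite and the Carlitz cyclotomic fields $F(\frl^\infty)$ are linearly disjoint over $F$. For a finite place $w$ of $K$ write $I_w\subset\Gal(\wt K/K)$ for the inertia subgroup. By definition $\wt K_\ur$ is the fixed field of $H:=\overline{\langle I_w : w\in\Sigma_K\rangle}$, and the field $M_\frp$ (the maximal subextension of $\wt K/K$ unramified at all $v\in\Sigma_K$ with $v\nmid\frp$, which (2) claims equals $\wt K_\ur K(\frp^\infty)$) is the fixed field of $H_\frp:=\overline{\langle I_w : w\in\Sigma_K,\ w\nmid\frp\rangle}$. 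I will use two facts about $I_w$ for $w$ above a prime $\frp$: (a) since $F(\frl^\infty)/F$ is unramified at $\frp$ for every $\frl\neq\frp$, putting $\wt K^{(\frp)}:=K\cdot\prod_{\frl\neq\frp}F(\frl^\infty)$ we have $I_w\subset\Gal(\wt K/\wt K^{(\frp)})$; (b) since $F(\frp^\infty)_\frp/F_\frp$ is totally ramified with group $A_\frp^\times$ and $K_w/F_\frp$ is finite, the local extension $K(\frp^\infty)_w/K_w$ is again totally ramified, so $I_w$ maps isomorphically onto $\Gal(K(\frp^\infty)_w/K_w)$, a subgroup of finite index in $A_\frp^\times$ — of index $1$ when $\frp$ is unramified in $K/F$.

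For part (2) I will prove $M_\frp=\wt K_\ur K(\frp^\infty)$ by showing the corresponding subgroups of $\Gal(\wt K/K)$ coincide. One inclusion is formal: $\wt K_\ur\subseteq M_\frp$ by definition, and $K(\frp^\infty)\subseteq M_\frp$ because $F(\frp^\infty)/F$ is ramified only at $\frp$ and $\infty$ while $\infty\notin\Sigma_K$; hence $H_\frp\subseteq H\cap\Gal(\wt K/K(\frp^\infty))$. For the reverse, split $\Sigma_K$ into the places above $\frp$ and the rest; in the abelian profinite group $\Gal(\wt K/K)$ this gives $H=H_\frp\cdot I_\frp$ with $I_\frp:=\overline{\langle I_w:w\mid\frp\rangle}$. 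Because $H_\frp=\Gal(\wt K/M_\frp)\subseteq\Gal(\wt K/K(\frp^\infty))$, the modular law for the (abelian) subgroup lattice yields $H\cap\Gal(\wt K/K(\frp^\infty))=H_\frp\cdot\bigl(I_\frp\cap\Gal(\wt K/K(\frp^\infty))\bigr)$. By (a), $I_\frp\subset\Gal(\wt K/\wt K^{(\frp)})$, and $\wt K^{(\frp)}\cdot K(\frp^\infty)=\wt K$ by construction, so $\Gal(\wt K/\wt K^{(\frp)})\cap\Gal(\wt K/K(\frp^\infty))=1$; therefore $H\cap\Gal(\wt K/K(\frp^\infty))=H_\frp$. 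Since the left side equals $\Gal(\wt K/\wt K_\ur K(\frp^\infty))$ and the right side equals $\Gal(\wt K/M_\frp)$, this gives $M_\frp=\wt K_\ur K(\frp^\infty)$.

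For part (1) I will bound the index of $H$ in $\Gal(\wt K/K)$. By (b), for every prime $\frp$ unramified in $K/F$ — all but finitely many — the inertia at a chosen place above $\frp$ is the entire factor $A_\frp^\times$ inside $\prod_\frl A_\frl^\times$, while for each of the finitely many remaining primes it is still a finite-index subgroup of that factor. The product of all these subgroups is a closed subgroup of $\prod_\frl A_\frl^\times$ of finite index contained in $H$, so $H$ has finite index in $\prod_\frl A_\frl^\times$, hence in $\Gal(\wt K/K)$, and $\wt K_\ur/K$ is finite.

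The step I expect to take real work is (b), the local computation at the finitely many primes ramified in $K/F$: one must check that composing the totally ramified tower $F(\frp^\infty)_\frp/F_\frp$ with the finite extension $K_w$ keeps the tower totally ramified over $K_w$ (so that $I_w$ is the full local Galois group, not merely its inertia part) and that the resulting group has finite index in $A_\frp^\times$. This follows from the quoted fact that $F(\frp^\infty)/F$ is totally ramified at $\frp$ with group $A_\frp^\times$ — so every finite subextension of $F(\frp^\infty)_\frp/F_\frp$ is totally ramified, hence of degree at most $[K_w:F_\frp]$, forcing $F(\frp^\infty)_\frp\cap K_w$ to be finite over $F_\frp$ — together with the fact that the residue field is unchanged upon adjoining a totally ramified extension. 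The remaining ingredients (linear disjointness of the $F(\frl^\infty)$, $\Gal(\wt K/K)$ open in $\prod_\frl A_\frl^\times$, the modular law in an abelian group) are standard.
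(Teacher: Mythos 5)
Your argument is correct and follows essentially the same route as the paper: both identify $\Gal(\wt K/K)$ with an open subgroup of $\Gal(\wt F/F)\cong\prod_\frl\cG_\frl$, observe that inertia at $w\mid\frp$ lands in the $\cG_\frp$-factor with finite index there, and then deduce (1) from the resulting finite-index product of inertia and (2) by intersecting with $\Gal(\wt K/K(\frp^\infty))$. The only differences are cosmetic (you phrase the intersection computation via the modular law and spell out closures and the local totally-ramified calculation a bit more explicitly than the paper does).
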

\begin{proof}
For each $\frl\in \Sigma_F$, we
put $\cG_\frl:=\Gal(F(\frl^\infty)/F)$.
Fix $\frp\in \Sigma_F$.
Then we have $\Gal(\wt F/F)\cong \cG_\frp\times \prod_{\frl\neq \frp}\cG_\frl$.
Since the restriction map $\Gal(\wt K/K) \to \Gal(\wt F/F)$ is injective, 
we may view $\Gal(\wt K/K)$ as a subgroup of $\cG_\frp\times \prod_{\frl\neq \frp}\cG_\frl$.
Let $\cI$ (resp.\ $\cJ$) be the subgroup of $\Gal(\wt K/K)$ generated by all inertia subgroups of $\Gal(\wt K/K)$ at $v\in \Sigma_K$ dividing $\frp$ (resp.\ not dividing $\frp$).
Then $\cI$ and $\cJ$ have finite index in $\cG_\frp$ and $\prod_{\frl\neq \frp}\cG_\frl$, respectively.
Hence the direct product $\cI\times \cJ$, which is  the subgroup generated by all inertia subgroups of $\Gal(\wt K/K)$ at finite places, has finite index in $\Gal(\wt K/K)$.
By $\cI\times \cJ=\Gal(\wt K/\wt K_\ur)$, we have (1).
Since the fixed field $\wt K^{\cJ}$ is the maximal subextension of $\wt K/K$ unramified at  all $v\in \Sigma_K$ with $v\nmid \frp$, we have 
\begin{align*}
\Gal(\wt K/\wt K^{\cJ})
=\cJ
&=(\cI\times\cJ)\cap \prod_{\frl\neq \frp}\cG_\frl\\
&=\Gal(\wt K/\wt K_\ur)\cap \Gal(\wt K/K(\frp^\infty))\\
&=\Gal(\wt K/\wt K_\ur K(\frp^\infty)).
\end{align*}
This implies (2).
\end{proof}

Let $K_0$ be the separable closure of $F$ in $K$.
We remark that $K/K_0$ is purely inseparable and hence all places are totally ramified.
For each place $v$ of $K$, we denote by $e_v$ the ramification index of $v$ in $K/F$. If $v$ is unramified in $K_0/F$, we obtain 
\[
e_v=[K:F]_{\mathrm{insep}}=[K:K_0].
\]
By Lemmas \ref{lemma.unip} and \ref{lemma.ext}, in the proof of Theorems \ref{theorem.R1} and \ref{theorem.R2},  we may assume that
\begin{itemize}
\item[(A1)] for each $v\in \Sigma_K$ and $\frp\in \Sigma_F$ with $v\nmid \frp$,  the $\cI_{K_v}$-action on $T_\frp\ul G$ is unipotent, and  
\item[(A2)] for each $\frp\in \Sigma_F$, $K(\frp^\infty)$ is the largest extension of $K$ in $\wt K$ unramified 
at all $v\in \Sigma_K$ with $v\nmid \frp$.
\end{itemize}

\begin{proof}[Proof of Theorem \ref{theorem.R1}]
It suffices to show that $\ul G[\frp](\wt K)=0$ for almost all $\frp$.
Now we find $[K:K_0]=p^m$ for some $m\geq 0$.
Let $\Sigma$ be the subset of $\Sigma_F$ consisting of all $\frp$ such that
\begin{itemize}
\item $\frp$ is unramified in $K_0/F$,
\item $q^{\deg \frp}-1>p^m$, and
\item $\ul G$ has good reduction at a finite place of $K$ dividing $\frp$.
\end{itemize}
Since $\Sigma_F\setminus \Sigma$ is finite, we shall consider only $\frp$ in $\Sigma$.

For  a prime $\frp\in \Sigma$, we assume that $\ul G[\frp](\wt K)\neq 0$, so that $(T_\frp\ul{G}/\frp T_\frp\ul{G})^{\cG_{\wt K}}\neq 0$.
Let $W\subset \ul G[\frp](\wt K)$ be  a non-zero irreducible $\bF_\frp$-subrepresentation of $\Gal(\wt K/K)$.
It follows by (A1) that $W$ is unramified at any finite place of $K$ not dividing $\frp$.
Hence the $\Gal(\wt K/K)$-action on $W$ factors through $\Gal(K(\frp)/K)$ by (A2).
Schur's lemma implies that $\bF:=\End_{\bF_\frp[\Gal(K(\frp)/K)]}(W)$ is a finite field containing $\bF_\frp$.
Since $\Gal(K(\frp)/K)$ is abelian, its action on $W$ is given by a character
\[
\bar \rho \colon \Gal(K(\frp)/K)\lra \bF^\times.
\]
Here $\Gal(K(\frp)/K)$ has order dividing $\#\bF_\frp^\times=q^{\deg \frp}_{}-1$ and hence 
$\bar \rho$ has values in $\bF_\frp^\times$.
This implies that the $\bF_\frp$-dimension of $W$ is one because $W$ is irreducible.
Thus there is an integer $0\leq n < q^{\deg\frp}-1$ such that 
\[
\bar \rho=\bar \chi_\frp^n,
\] where $\bar\chi_\frp\colon \Gal(K(\frp)/K)\to \bF_\frp^\times$ is the  character coming from $\ul C[\frp](K^\sep)$.
We claim that $0\leq n\leq p^m$.
To see this, we take  $v\in \Sigma_K$ dividing $\frp$ such that $\ul G$ has good reduction over $K_v$.
Note that $e_v=p^m$.
%Since $\bar \chi$ is totally ramified at $v$, 
%it suffices to consider the restriction $\bar{\rho}|_{\cI_{K_v}}$ to the inertia subgroup.
%it is enough to show that $\bar \rho|_{\cI_{K_v}}=1$ or $\bar \rho|_{\cI_{K_v}}=\bar \chi|_{\cI_{K_v}}$.
Denote by $R$ the valuation ring of $K_v$.
Let $\ul{\wh M}=(\wh M, \tau_{\wh M})$ be a local shtuka over $R$ corresponding to the $A$-motive $\ul M(\ul G)_{K_v}$.
By construction, $\ul{\wh M}$ is effective \textit{of height $\leq 1$} 
meaning that $(z_\frp-\zeta_\frp)\wh M\subset \tau_{\wh M}(\wh\sig\ast \wh M) \subset \wh M$.
Note that $W|_{\cG_{K_v}}$ can be viewed as an $\bF_\frp$-subrepresentation of $\Hom_{A_\frp}(\wc T_\frp\ul{\wh M}, A_\frp)\ot_{A_\frp}\bF_\frp$. 
It follows by \cite[Proposition 4.23]{Oku24} that $\bar\rho|_{\cI_{K_v}}=\theta_1^j$ with $0\leq j \leq e_v=p^m$, where $\theta_1\colon \cI_{K_v} \to \bF_\frp^\times$ is the \textit{fundamental character of level one} (see \cite{Ser72} and \cite[\S\S 4.2]{Oku24}).
Since $\bar\chi_\frp|_{\cI_{K_v}}=\theta_1$,  we have $n\equiv j\pmod {q^{\deg\frp}-1}.$
By $e_v=p^m<q^{\deg \frp}-1$, we get $n=j$ as claimed.

Consequently, if $\ul G[\frp](\wt K)\neq 0$ for some $\frp\in \Sigma$, then $\ul G[\frp](\wt K)$ has an $\bF_\frp$-subspace $W$ satisfying either 
\begin{itemize}
\item[(i)] the $\Gal(\wt K/K)$-action on $W$ is trivial, or
\item[(ii)] $\Gal(\wt K/K)$ acts on $W$ via $\bar{\chi}_\frp^n$ for some $1\leq n \leq p^m$.
\end{itemize}
There are only finitely many $\frp \in \Sigma$ such that (i) occurs, because if not, then $\ul G(K)_{\tor}$ is infinite; but this contradicts \cite[Proposition 1]{Poo97}.
We assume that there are infinitely many $\frp\in \Sigma$ for which (ii) occurs.
Then there are an infinite subset $\Sigma' \subset \Sigma$ and an integer $n'$ with $1\leq n' \leq  p^m$ such that, for all $\frp\in \Sigma'$, $\ul G[\frp](\wt K)$ has an $\bF_\frp$-subrepresentation whose Galois action is described by 
$\bar{\chi}^{n'}_\frp$ .
We can take a finite place $v$ of $K$ such that it does not divide all $\frp\in \Sigma'$ and  $\ul G$ has good reduction at $v$.
Put 
\[
\ul M:=\ul M(\ul G)^\vee\otimes \ul{M}(\ul C)^{\ot n'},
\]
 which is an $A$-motive over $K$ of rank equal to $\rk \ul{G}$.
Since $\wc T_\frp \ul{M}(\ul G)^\vee\otimes_{A_\frp}\bF_\frp$ contains a submodule isomorphic to $\ul G[\frp](\wt K)$, 
we see that $\wc T_\frp\ul M/\frp \wc T_\frp\ul M$ contains a non-zero submodule with trivial $\cG_{K}$-action.
In particular, we have
$(\wc T_\frp\ul M/\frp \wc T_\frp\ul M)^{\cG_{K_v}}\neq 0$ for all $\frp\in\Sigma'$.
By construction, $\ul M_{K_v}$ has good reduction with non-zero weights 
\[
n'-\frac{1}{r},\ldots, n'-\frac{1}{r}\es\es\es (\text{multiplicity}\ r),
\]
 where $r=\rk \ul{G}\geq 2$.
However, this must not happen by Proposition \ref{proposition.frl} because $(\wc T_\frp\ul M/\frp \wc T_\frp\ul M)^{\cG_{K_v}}\neq 0$ implies $(\wc V_\frp\ul M/\wc T_\frp\ul M)^{\cG_{K_v}}\neq 0$.
Therefore (i) and (ii) occur for only finitely many $\frp$.
The theorem is proved.
\end{proof}

\begin{proof}[Proof of Theorem \ref{theorem.R2}] 
Suppose that $\ul G[\frp^\infty](\wt K)_\tor$ is infinite for some $\frp\in \Sigma_F$.
Then  
$(V_\frp\ul {G})^{\cG_{\wt K}}\neq 0$.
Take a non-zero irreducible $Q_\frp$-subrepresentation $V\subset (V_\frp\ul {G})^{\cG_{\wt K}}$ of $\cG_{K}$. 
Since $V$ is unramified at all finite places of $K$ not dividing $\frp$ by (A1), the  $\cG_{K}$-action factors through 
$\Gal(K(\frp^\infty)/K)$ by (A2).
Take a place $v\in \Sigma_K$ dividing $\frp$.
We put $L:=K_v$.
Then we have $K(\frp^\infty)\subset L_\infty:=L\bF_\frp\dpl \zeta_\frp\dpr_\infty$ and hence  the $\cG_L$-action on $V$ factors through $\Gal(L_\infty/L)$.
It follows by (A1) that 
the $A$-motive $\ul M(\ul G)$ has \textit{strongly semi-stable reduction} over $L$; see \cite[Definition 4.6]{Gar03} and \cite[Definition 5.8]{Oku24}.
By \cite[Proposition 5.15]{Oku24}, the semi-simplification $(V_\frp\ul G|_{\cG_L})^\ss$
of $V_\frp\ul G|_{\cG_L}$ is a direct sum of $z_\frp$-adic crystalline representations of $\cG_L$. 
Since $V$ is irreducible as a $Q_\frp$-representation of the abelian group $\Gal(K(\frp^\infty)/K)$, we see that $V|_{\cG_L}$ is also semi-simple.
Hence $V|_{\cG_L}$ is a direct summand of $(V_\frp\ul G|_{\cG_L})^\ss$; in particular, it is also crystalline.
Since $V|_{\cG_L}$ is fixed by $\Gal(L_\infty/L)$ and of dimension $\leq 2$, Lemmas \ref{lem.char} and \ref{lemma.key} imply that 
$Q_\frp(n)\subset V|_{\cG_K}$ for some integer $n$, after replacing $L$ by a finite separable extension.
In particular, we have $(V_\frp\ul{G}|_{\cG_L})^{\cG_{L_\infty}}\neq 0$.
This means that $\ul{G}[\frp^\infty](L_\infty)$ is infinite, which contradicts  Corollary \ref{corollary.Amod}
\end{proof}

%%%%%%%%%%%%%%%%%%%%%%%%%%%%%%%%%%%%%%%%%%%%%%%%%%%%%%%%%%%%
%
%   謝辞
%
%%%%%%%%%%%%%%%%%%%%%%%%%%%%%%%%%%%%%%%%%%%%%%%%%%%%%%%%%%%%
%\section*{Acknowledgments}
%The author would like to thank .

%%%%%%%%%%%%%%%%%%%%%%%%%%%%%%%%%%%%%%%%%%%%%%%%%%%%%%%%%%%%%%
%%
%%  Reference
%%
%%%%%%%%%%%%%%%%%%%%%%%%%%%%%%%%%%%%%%%%%%%%%%%%%%%%%%%%%%%%%%
%\bibliographystyle{alphaurl}%DOIを出力可能
\bibliographystyle{amsalpha} 
\bibliography{Okumura}

%%%%%%%%%%%%%%%%%%%%%%%%%%%%%%%%%%%%%%%%%%%%%%%%%%%%%%%%%%%%%%%%%%%%%%%%%%%%%%%%%%%%%%%%%%%%%%%%%%%%%%%%%%%%%%%
\vspace{50pt}

Department of Architecture, Faculty of Science and Engineering,
Toyo University

2100, Kujirai, Kawagoe, 
Saitama 350-8585, Japan

{\it  E-mail address} : \email{\tt okumura165@toyo.jp}

{\it URL} : {\tt \url{https://sites.google.com/view/y-okumura/index-e?authuser=0}}
%%%%%%%%%%%%%%%%%%%%%%%%%%%%%%%%%%%%%%%%%%%%%%%%%%%%%%%%%%%%%%%%%%%%%%%%%%%%%%%%%%%%%%%%%%%%%%%%%%%%%%%%%%%%%%%%%%%%%%%%%%%%%%%%%
\end{document}